\newtheorem{thm}{Theorem}
\newtheorem{lem}{Lemma}[section]
\newtheorem{prop}[lem]{Proposition}
\newtheorem{cor}[lem]{Corollary}
\theoremstyle{definition}
\newtheorem{rem}[lem]{Remark}
\numberwithin{equation}{section}
\DeclareSymbolFont{cyrletters}{OT2}{wncyr}{m}{n}
\DeclareMathSymbol{\Sha}{\mathalpha}{cyrletters}{"58}
\DeclareMathOperator{\Sel}{Sel}
\DeclareMathOperator{\rank}{rank}
\DeclareMathOperator{\Ann}{Ann}
\DeclareMathOperator{\Rad}{Rad}
\DeclareMathOperator{\Gal}{Gal}
\DeclareMathOperator{\Norm}{Norm}
\DeclareMathOperator{\Aut}{Aut}
\DeclareMathOperator{\ord}{ord}
\newcommand{\p}{\mathfrak{P}}
\newcommand{\Q}{{\mathbb Q}}
\newcommand{\Z}{{\mathbb Z}}
\newcommand{\F}{{\mathbb F}}
\newcommand{\cA}{\mathcal{A}}
\newcommand{\cB}{\mathcal{B}}
\newcommand{\OO}{{\mathcal O}}
\DeclareMathOperator{\Jac}{Jac}
\begin{document}


\baselineskip=17pt



\title[Perfect powers expressible as $x^5+y^5$ or $x^7+y^7$]{Perfect powers expressible as sums of two fifth or seventh powers}

\author[S. R. Dahmen]{Sander R. Dahmen}
\address{
Department of Mathematics\\
VU University Amsterdam\\
De Boelelaan 1081a\\
1081 HV Amsterdam\\
The Netherlands}
\email{s.r.dahmen@vu.nl}

\author[S. Siksek]{Samir Siksek}
\address{Mathematics Institute\\
University of Warwick\\
Coventry\\
CV4 7AL\\
United Kingdom}
\email{s.siksek@warwick.ac.uk}

\date{January 25, 2014}

\begin{abstract}
We show that the generalized Fermat equations with signatures $(5,5,7)$, $(5,5,19)$, and $(7,7,5)$ (and unit coefficients) have no non-trivial primitive integer solutions. 
Assuming GRH, we also prove the nonexistence of non-trivial primitive integer solutions for the signatures $(5,5,11)$, $(5,5,13)$, and $(7,7,11)$. The main ingredients for obtaining our results are descent techniques, the method of Chabauty-Coleman, and the modular approach to Diophantine equations.
\end{abstract}

\subjclass[2010]{Primary 11D41; Secondary 11F80, 11G30}

\keywords{Chabauty-Coleman, Curves, Elliptic Curves, Fermat-Catalan, Galois Representations, Generalized Fermat, Jacobians, Modular Forms}

\maketitle

\section{Introduction}

Let $p$, $q$, $r \in \Z_{\geq 2}$. The equation
\begin{equation}\label{eqn:FCgen}
x^p+y^q=z^r
\end{equation}
is known as the Generalized Fermat equation (or the Fermat--Catalan equation)
 with signature $(p,q,r)$ (and unit coefficients).
As in Fermat's Last Theorem, one is interested in integer solutions
$x$, $y$, $z$. Such a solution is called {\em non-trivial} if
$xyz \neq 0$, and {\em primitive} if $x$, $y$, $z$ are coprime.
Let $\chi=p^{-1}+q^{-1}+r^{-1}$. The parametrization
of non-trivial primitive integer solutions for $(p,q,r)$ with
$\chi \geq 1$ has now been completed \cite{Ed}.
The Generalized Fermat Conjecture \cite{Da97}, \cite{DG}
is concerned with the case $\chi<1$.
It states that the only non-trivial primitive integer solutions to~\eqref{eqn:FCgen} with $\chi<1$ are given by
\begin{gather*}
1+2^3 = 3^2, \quad 2^5+7^2 = 3^4, \quad 7^3+13^2 = 2^9, \quad
2^7+17^3 = 71^2, \\
3^5+11^4 = 122^2, \quad 17^7+76271^3 = 21063928^2, \quad
1414^3+2213459^2 = 65^7, \\
9262^3+15312283^2 = 113^7, \quad
43^8+96222^3 = 30042907^2, \\ 33^8+1549034^2 = 15613^3.
\end{gather*}
The Generalized Fermat Conjecture
has been established for many signatures $(p,q,r)$,
including for several infinite families of signatures.
For exhaustive surveys see Cohen's book \cite[Chapter 14]{Cohen},
or \cite{BeChDaYa}. 

Many of the equations are solved using the modular approach to Diophantine
equations. If we restrict ourselves to Frey curves over $\Q$ and the signature
$(p,q,r)$ with $\chi<1$ consisting of only primes, then the only signatures (up
to permutation) for which a Frey curve is known are given by 
\[
(l,l,l), \;
(l,l,2), \; (l,l,3),\; (2,3,l),\; (3,3,l),\;
(5,5,l),\; (7,7,l)
\]
where $l$ is a prime ($\geq 5,5,5,7,5,2,2$
respectively to ensure that $\chi<1$). These Frey
curve are all already mentioned in \cite{Da97}.
For all but the last two signatures, these Frey
curves have been used to completely solve at least 
one Generalized Fermat equation (with unit
coefficients, as always throughout this paper).
In fact, the first three cases have completely
been solved. 
The $(l,l,l)$ case corresponds of course to
Fermat's Last Theorem \cite{Wiles95} (with
exponent $l\geq 5$) and the $(l,l,2)$ and
$(l,l,3)$ cases have been solved for $l\ \geq 7$
by Darmon and Merel \cite{DarmonMerel97} using a
modular approach and for $l=5$ by Poonen
\cite{Poonen98} using descent on elliptic curves
and Jacobians of genus $3$ cyclic covers of the
projective line.
The $(2,3,l)$ case has only been solved (recall
that we have now restricted ourselves to primes $l\geq 7$) for
$l=7$ using a combination of the modular approach and
explicit methods (including Chabauty-Coleman) for
determining $\Q$-rational points on certain genus
$3$ curves (twists of the Klein quartic); see
\cite{PSS}. Finally the $(3,3,l)$ case is solved
for a set of prime exponents $l$ with Dirichlet
density $\geq 0.628$, and all $l \leq 10^9$; see
\cite{ChenS}.  One feature that is common to the 
Frey curves associated to the first five
signatures, is that evaluating the Frey curve at a
trivial solution gives either a singular curve or
an elliptic curve with complex multiplication.
This is one of
the main reasons why the first three signatures
can be dealt with for all relevant prime exponents and why
in the $(3,3,l)$ case so many prime exponents $l$ can be
handled. In the latter case the main obstruction
to solving the equation completely is because of
the Catalan solution (which is actually only present for $l=2$, but nevertheless still forms an obstruction for larger primes $l$). The Catalan solution also forms an
obstruction for the $(2,3,l)$ case, but here there
are many other difficulties.

The main reason why the Frey curves associated to
signature $(5,5,l)$ or $(7,7,l)$ have not been used
before to completely solve a generalized Fermat
equation, is probably because evaluating the Frey
curve at a (primitive) trivial integer solution
does not always give a singular or CM curve. In
fact, only $(\pm 1)^5+(\mp 1)^5=0^l$ leads to a
singular curve (throughout this paper, when $\pm$ or $\mp$ signs are present in a formula, they are meant to correspond in the obvious way within the formula).
The modular approach however still
gives a lot of non-trivial information. This
allows us to combine the modular approach with the
method of Chabauty-Coleman and descent techniques
to solve three new cases of the generalized Fermat
equations
\begin{align}
\label{eqn:55l} x^5+y^5 = & z^l\\
\label{eqn:77l} x^7+y^7 = & z^l.
\end{align}
In fact, the only values for which these equations already have been solved, are  covered by the first three families of exponent triples: $(5,5,2)$ and $(5,5,3)$ are solved by Poonen, $(7,7,2)$ and $(7,7,3)$ are solved by Darmon and Merel, while the cases $(5,5,5)$ and $(7,7,7)$ are of course special cases of Fermat's Last Theorem, and the modular method using a Frey curve for exponent $(l,l,l)$ works for these two special cases as well (of course there are classical descent proofs, exponent $5$ was first solved around 1825 independently by Legendre and Dirichlet, exponent $7$ was first solved around 1839 by Lam\'e).
We see that the first two open cases for~\eqref{eqn:55l} and~\eqref{eqn:77l} are the signatures $(5,5,7)$ and $(7,7,5)$ respectively. In this paper we shall solve these equations, as well as the equation with signature $(5,5,19)$. 

\begin{thm}\label{thm:55l}
Let $l=7$ or $l=19$. Then the only solutions to the equation
\[x^5+y^5=z^l\]
in coprime integers $x$, $y$, $z$ are $(\pm 1,\mp 1,0)$, $(\pm 1, 0, \pm 1)$, and $(0,\pm 1, \pm 1)$.
\end{thm}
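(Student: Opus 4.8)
The plan is to treat the quartic factor $\phi_5(x,y)=x^4-x^3y+x^2y^2-xy^3+y^4$ as the central object and to combine an explicit descent with the modular method and Chabauty--Coleman. First I would factor $x^5+y^5=(x+y)\,\phi_5(x,y)$ and observe that for coprime $x,y$ the greatest common divisor of the two factors divides $5$ (indeed $\phi_5(x,-x)=5x^4$). Splitting into the cases $5\nmid z$ and $5\mid z$, a standard descent shows that in each case $x+y$ and $\phi_5(x,y)$ are, up to a controlled power of $5$ and sign, perfect $l$-th powers. Writing $s=x+y$ and $u=x-y$ one has the clean identity $16\,\phi_5(x,y)=s^4+10s^2u^2+5u^4=(s^2+5u^2)^2-20u^4$, which factors over $\Q(\sqrt5)$ into a product of two conjugate quadratic forms. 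It is this factorization that makes a further descent (over $\Q$, or over $\Q(\sqrt5)$) possible and that ultimately produces the auxiliary curves on which everything hinges.

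Next I would attach to a putative solution the known Frey elliptic curve over $\Q$ of signature $(5,5,l)$ and run the modular machinery: verify that its mod-$l$ Galois representation is irreducible for $l=7,19$, apply Ribet level-lowering to conclude that it arises from a newform of a specific small level, and then enumerate that finite space of newforms. As the introduction already stresses, evaluating this Frey curve at the trivial solutions does not generally give a singular or CM curve, so the modular method by itself cannot close the argument; its role here is to eliminate residue classes and thin out the possibilities, so that the surviving cases correspond to rational points on a manageable family of curves.

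For those surviving cases I expect the descent above to yield explicit hyperelliptic curves $C$ of genus $2$ over $\Q$ (or a small number of quadratic twists thereof) whose rational points are in bijection with the solutions of the original equation. The decisive step is then Chabauty--Coleman: one computes the Mordell--Weil rank of $\Jac(C)$ by a $2$-descent, and provided $\rank\Jac(C)(\Q)<2=\dim\Jac(C)$, a Coleman integration at a prime of good reduction bounds the number of rational points and, combined with a Mordell--Weil sieve if that bound is not already sharp, determines them exactly. One then checks that every rational point so obtained corresponds to one of the listed trivial solutions $(\pm1,\mp1,0)$, $(\pm1,0,\pm1)$, $(0,\pm1,\pm1)$.

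The main obstacle will be the rank condition together with the rigorous, complete determination of the rational points. Establishing $\rank\Jac(C)(\Q)\le 1$ requires controlling the relevant Selmer groups, hence the class group and unit data of the number fields entering the descent, and for some exponents these invariants can be certified only conditionally; this is presumably exactly why $l=7$ and $l=19$ are handled outright while the companion exponents require GRH. A secondary difficulty is that the Coleman bound may exceed the count of known points, so that a well-chosen auxiliary prime and a Mordell--Weil sieve are needed to eliminate the remaining putative points and conclude that no non-trivial primitive solution exists.
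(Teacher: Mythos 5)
Your overall strategy (factor $x^5+y^5=(x+y)H_5$, split on whether $5\mid z$, then combine the modular method with descent and Chabauty--Coleman) is the right one, but several of your concrete steps are wrong or would fail. First, the curves produced by this descent are not genus $2$: the hyperelliptic curves that encode the condition ``$x+y$ and $H_5$ are $l$-th powers up to a power of $5$'' have the shape $Y^2=aX^l+b$ and hence genus $(l-1)/2$, i.e.\ genus $3$ for $l=7$ and genus $9$ for $l=19$; the Chabauty condition is $\rank \Jac(C)(\Q)<g$, not $<2$. (Genus $2$ curves occur in this paper only for the signature $(7,7,5)$, where the exponent playing the role of $l$ is $5$.) Second, your identity $16H_5=(s^2+5u^2)^2-20u^4$ with $u=x-y$ cannot be used directly, because $u$ is not an $l$-th power, so dividing through does not place a rational point on any fixed hyperelliptic curve. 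The paper instead uses $5(x^2+y^2)^2=4H_5+(x+y)^4$, in which \emph{both} right-hand terms are $l$-th powers up to constants (when $5\nmid z$ one has $H_5=z_2^l$ and $(x+y)^4=(z_1^4)^l$), and this immediately gives a point on $C_l:Y^2=20X^l+5$ over $\Q$. Your alternative of pushing the $\Q(\sqrt{5})$ factorization through a further descent is workable in principle, but it introduces unit-exponent ambiguities (which must be removed by the modular method) and lands on genus-$(l-1)/2$ curves over $\Q(\sqrt{5})$; that is precisely the route the paper takes for $l=11,13$, and it is carried out there only \emph{under GRH}, because the class group and unit group computations behind the $2$-descent over the quadratic field are conditional. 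For $l=19$ (a genus $9$ curve over $\Q(\sqrt{5})$) that route is computationally out of reach, so the unconditional proof really needs the over-$\Q$ identity.

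Third, your outline has no viable endgame for the case $5\mid z$ when $l=19$. In the paper this case is closed for \emph{all} primes $l\geq 7$ purely by the modular method: the (twisted Kraus) Frey curve level-lowers to newforms of level $2^{\alpha}\cdot 5$ with $\alpha\in\{1,3,4\}$, i.e.\ to the three elliptic curves $40a1$, $80a1$, $80b1$, which are eliminated by trace comparisons at $p=3$ and $p=43$ together with Kraus's method at $p=103$ for the stray case $l=17$ --- no auxiliary curves and no Chabauty are involved. You instead treat the modular method as merely ``thinning'' cases before a common Chabauty step; but the descent curve for $5\mid z$ is $D_l:Y^2=4X^l+5^{2l-5}$, and for $l=19$ the rank of $\Jac(D_{19})(\Q)$ cannot at present be determined unconditionally (the $2$-Selmer bound of $1$ is conditional on GRH, and no point of infinite order is known), so Chabauty is simply unavailable there. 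Recognizing that the modular method must carry the entire $5\mid z$ case --- and that it can, uniformly in $l$ --- is an essential ingredient your proposal is missing.
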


\begin{thm}\label{thm:775}
The only solutions to the equation
\begin{equation}\label{eqn:775}
x^7+y^7=z^5
\end{equation}
in coprime integers $x$, $y$, $z$ are $(\pm 1,\mp 1,0)$, $(\pm 1, 0, \pm 1)$, and $(0,\pm 1, \pm 1)$.
\end{thm}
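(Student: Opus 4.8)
The plan is to prove Theorem~\ref{thm:775} by the same three-step strategy that underlies Theorem~\ref{thm:55l}: a Galois descent converting the equation into $K$-rational points on a family of low-genus curves, a modular argument discarding most of these curves, and Chabauty--Coleman to finish off the survivors.

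\emph{Descent.} I would first write $\phi(x,y)=(x^7+y^7)/(x+y)$, a binary form of degree $6$ whose only possible common factor with $x+y$ is $7$. Hence $z^5=(x+y)\,\phi(x,y)$ forces, after separating the cases $7\nmid z$ and $7\mid z$ and using coprimality, that both $x+y$ and $\phi(x,y)$ are fifth powers up to a controlled power of $7$. Dehomogenising with $u=x/y$ turns $\phi(x,y)=(\text{fifth power})$ into the superelliptic curve $w^5=\Phi_{14}(u)$, where $\Phi_{14}(u)=u^6-u^5+u^4-u^3+u^2-u+1$; this curve has genus $10$, far too high for Chabauty. The key observation is that $\Phi_{14}$ splits into three quadratics over the real cubic field $K=\Q(\zeta_7)^+$, so over $K$ a further descent (twisting by a finite Selmer set of constants $\delta$ whose product is a fifth power) replaces the genus-$10$ curve by finitely many twisted genus-$2$ curves $C_\delta:\ w^5=\delta\,Q(u)$, with $Q$ a quadratic factor of $\Phi_{14}$. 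A solution $(x,y,z)$ then yields a $K$-rational point on one such $C_\delta$ whose $u$-coordinate is consistent across the three quadratic factors.

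\emph{Modular method.} Next I would attach to a putative solution the Frey curve over $\Q$ associated with signature $(7,7,5)$ and level-lower its mod-$5$ Galois representation into the finite, explicitly computable space of classical newforms of the predicted level. Matching traces of Frobenius against the local conditions forced by the Frey curve --- in particular its behaviour at $7$ --- should eliminate the newforms incompatible with a nontrivial solution, thereby ruling out one of the two descent cases and cutting the admissible twisting parameters $\delta$ down to a short list.

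\emph{Chabauty--Coleman and the main obstacle.} For each surviving curve $C_\delta$ I would exhibit the $K$-rational points coming from the trivial solutions, bound the Mordell--Weil rank of $\Jac(C_\delta)$ over $K$ by an explicit descent, and --- once the rank is small enough relative to the genus and $[K:\Q]$ for the number-field form of Chabauty--Coleman to apply --- show via vanishing of the relevant Coleman integrals that these are the only $K$-points; pulling back recovers exactly the solutions $(\pm1,\mp1,0)$, $(\pm1,0,\pm1)$, $(0,\pm1,\pm1)$. The hard part will be this rank step: one must produce \emph{provably correct} upper bounds for the ranks of genus-$2$ Jacobians over the cubic field $K$, together with the supporting class-group and newform computations, and it is precisely the fact that these can be carried out \emph{unconditionally} when $l=5$ --- in contrast to the signatures $(5,5,11)$, $(5,5,13)$, $(7,7,11)$, where GRH must be assumed --- that makes Theorem~\ref{thm:775} unconditional.
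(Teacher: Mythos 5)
Your strategy coincides with the paper's: factor $(x^7+y^7)/(x+y)$ over $K=\Q(\zeta_7)^+$, reduce to genus-$2$ curves over $K$, use Kraus's $(7,7,l)$ Frey curve with level-lowering to prune the twists, and finish with Chabauty--Coleman, with the rank computations over $K$ as the computational crux. But two of your steps, as stated, would fail. First, the descent. A primitive solution does \emph{not} produce a $K$-rational point on $w^5=\Phi_{14}(u)$, nor on $w^5=\delta\,Q_j(u)$ for $\delta$ in any finite set: with $u=x/y$ the factorization gives $Q_j(u)=(x^2+\theta_j xy+y^2)/y^2=\delta_j\gamma_j^5/y^2$ with $\delta_j$ a unit of $\OO_K$, so $\delta Q_j(u)$ is a fifth power only if $\delta\delta_j/y^2$ is, and nothing confines the class of $y^2$ in $K^*/(K^*)^5$ to a finite set. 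The indispensable extra ingredient is the companion equation $x+y=z_1^5$ (resp.\ $7(x+y)=z_1^5$ when $7\mid z$): dividing by $(u+1)^2$ gives $Q_j(u)/(u+1)^2=\delta_j(\gamma_j/z_1^2)^5$, where the twist now does lie in a finite set, and under the substitution $v=(x-y)/(x+y)$ this curve is precisely the paper's hyperelliptic model $Y^2=cX^5+d$ (the curves $C_{5,i}$, $D_{5,i}$, which the paper obtains from linear relations among the symmetric forms $(x\pm y)^2$ and $x^2+\theta_j xy+y^2$). In other words, the genus drop from $10$ to $2$ is not bought by factoring $\Phi_{14}$ alone; it requires building the $(x+y)$-equation into the curves.

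Second, the modular method cannot ``rule out one of the two descent cases''. After level-lowering the mod-$5$ representation to levels $2^2\cdot 7^2$ and $2^3\cdot 7^2$, one newform in each case necessarily survives: those attached to $E_{196a1}$ and $E_{392c1}$, which are the specializations of the Frey curve at the trivial solutions $(\pm 1,0,\pm 1)$ (case $7\nmid z$) and $(\pm 1,\mp 1,0)$ (case $7\mid z$), so no congruence on traces of Frobenius can ever eliminate them. What the modular input actually achieves in the paper --- and this needs more than trace-matching, namely the sieve of Lemma~\ref{lem:ExpSieve77l}, which combines the Frey-curve congruences with fifth-power conditions in the residue fields of $\Z[\zeta_7]$ --- is to pin the unit exponents in $x+\zeta y=(1+\zeta)^r(1+\zeta^2)^s\beta^5$ (times $1-\zeta$ if $7\mid z$) to $(r,s)=(0,0)$, cutting the $2\cdot 5^2=50$ twists down to one per case. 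Chabauty is then unavoidable in \emph{both} cases; the paper uses rank-$1$ Chabauty--Coleman at the prime above $7$ on $C_{5,3}$ for $7\nmid z$, and a rank-$0$ torsion argument on $D_{5,2}$ for $7\mid z$ (also, the classical Chabauty condition $r\le g-1$ suffices there; the stronger $[K:\Q](g-1)$ version you invoke is not needed). Your final point stands: the unconditional class- and unit-group computations behind the $2$-descents over $K$ are exactly what makes $l=5$ unconditional while $l=11,13$ require GRH.
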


To prove Theorem~\ref{thm:55l}, we exploit the fact that the associated Frey curve evaluated at a primitive trivial integer solution with $z=0$ gives a singular curve in order to solve~\eqref{eqn:55l} when $5 \mid z$ for all primes $l$. For the remaining case $5 \nmid z$, we
relate primitive integer solutions of~\eqref{eqn:55l} with $l=7$ and $l=19$ 
to $\Q$-rational points on a curve of genus $3$ and $9$ respectively, which we are able to determine using Chabauty-Coleman.
For Theorem~\ref{thm:775}, we relate primitive integer solutions of~\eqref{eqn:775}
to $K$-rational points on genus $2$ curves over the totally real cubic
field $K=\Q(\zeta+\zeta^{-1})$ where $\zeta$ is a primitive $7$-th
root of unity. Our factorization argument leads us in fact to $50$ $5$-tuples of
such genus $2$ curves for which we need to determine the $K$-rational
points for at least one curve per $5$-tuple. We shall use the modular approach
to rule out all but two of the $5$-tuples of genus $2$ curves.
For the remaining two $5$-tuples of curves, we were able to determine enough $K$-rational
points using the method of Chabauty-Coleman to finish the proof of Theorem~\ref{thm:775}.
We used the computer package {\tt MAGMA} \cite{MAGMA} for all our calculations. The {\tt MAGMA} scripts we refer to in this paper are posted at \verb|www.few.vu.nl/~sdn249/sumsofpowers.html|.

Many of our computations depend on class group and unit group computations, which become significantly faster under assumption of the generalized Riemann hypothesis for Dedekind zeta functions (abbreviated as GRH from now on). As it turns out, assuming GRH, we can also deal with the exponents $(5,5,11), (5,5,13)$, and $(7,7,11)$.

\begin{thm}\label{thm:GRH}
Assume GRH. If $l \in \{11,13\}$, then~\eqref{eqn:55l} has no non-trivial primitive integer solutions. If $l=11$, then~\eqref{eqn:77l} has no non-trivial primitive integer solutions.
\end{thm}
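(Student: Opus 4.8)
The plan is to follow verbatim the strategies developed for Theorems~\ref{thm:55l} and~\ref{thm:775}, the point being that these methods apply uniformly to the exponents $l \in \{11,13\}$ for~\eqref{eqn:55l} and $l=11$ for~\eqref{eqn:77l}; the only obstruction to stating the result unconditionally is the cost of the underlying arithmetic-of-number-fields computations, which is precisely what the GRH hypothesis alleviates.

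For $x^5+y^5=z^l$ with $l \in \{11,13\}$, I would first dispose of the case $5 \mid z$ exactly as in the proof of Theorem~\ref{thm:55l}. There the argument exploits that the Frey curve evaluated at a primitive trivial solution with $z=0$ is singular, and the ensuing modular analysis eliminates solutions with $5 \mid z$ for \emph{every} prime exponent $l$; so it applies in particular to $l=11,13$ with no change. For the complementary case $5 \nmid z$, I would again relate primitive solutions to $\Q$-rational points on an explicit curve $C_l$ whose genus grows with $l$ (genus $3$ for $l=7$ and genus $9$ for $l=19$, hence correspondingly larger for $l=11,13$), and then determine those rational points by Chabauty-Coleman. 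The descent producing $C_l$ and the Chabauty-Coleman computation both rest on class group and unit group data for the relevant number fields.

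For $x^7+y^7=z^{11}$ I would mirror the proof of Theorem~\ref{thm:775}: factor over the totally real cubic field $K=\Q(\zeta+\zeta^{-1})$ with $\zeta$ a primitive seventh root of unity, reduce the problem to determining $K$-rational points on a family of genus $2$ curves grouped into tuples, eliminate as many tuples as possible by the modular approach, and finish the surviving tuples by Chabauty-Coleman over $K$. Since the analysis over $K$ is governed by the signature $(7,7,\ast)$ rather than by the value of the third exponent, the structural steps carry over from $l=5$ to $l=11$ essentially unchanged, with only the modular elimination and the Chabauty input recomputed for the new exponent.

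The main obstacle is therefore not conceptual but computational: the class group and unit group computations that underpin both the descent and every Chabauty-Coleman step become markedly more expensive for these larger exponents (and, over $K$, for the higher-degree fields arising in the factorization), and MAGMA cannot certify them unconditionally within a feasible running time. Under GRH, however, these computations become tractable and their outputs certified, which is exactly why GRH appears in the hypothesis. No ingredient beyond those already present in the proofs of Theorems~\ref{thm:55l} and~\ref{thm:775} is needed.
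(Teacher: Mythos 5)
Your outline for $x^7+y^7=z^{11}$ does match the paper's proof: Proposition~\ref{prop:Modular77l} is already established for all $l \in L_7$ (so in particular for $l=11$), the GRH $2$-Selmer computations over $K$ (Tables~\ref{table:RankBoundsC77l} and~\ref{table:RankBoundsD77l}) give $\rank \Jac(C_{11,3})(K)=1$ and $\rank \Jac(D_{11,0})(K)=0$, and Chabauty--Coleman over $K$ finishes exactly as for $l=5$. One correction: the curves involved are not of genus $2$ for $l=11$; they are $Y^2=-4\eta_i X^l+1$, of genus $(l-1)/2=5$. Only the coefficients $\eta_i$ are governed by the signature $(7,7,\ast)$; the genus grows with the third exponent.

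The $(5,5,l)$ half of your proposal, however, contains a genuine gap. You propose to treat $5\nmid z$ verbatim as in Theorem~\ref{thm:55l}, i.e.\ via the curve $C_l\colon Y^2=20X^l+5$ over $\Q$, asserting that GRH is needed only to make the computations feasible. That is not so: the $2$-Selmer ranks in Table~\ref{table:RankBoundsC} were computed \emph{unconditionally} (and the paper notes that GRH does not extend this range), and for $l=11,13$ they equal $2$, whereas only the single point of infinite order $D=[(1,5)-\infty]$ is known on $J_l(\Q)$; indeed, assuming finiteness of $\Sha(\Q,J_l)$, the parity remark in Section~\ref{sec:Chabauty55l} forces $\rank J_l(\Q)=2$ for these two values. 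Hence $D$ does not generate a finite-index subgroup of $J_l(\Q)$, and the Chabauty--Coleman step, which requires explicit generators of such a subgroup in order to compute $\Ann(J_l(\Q))$, cannot be carried out, with or without GRH. (Incidentally, the genus of $C_l$ is $(l-1)/2$, so $5$ and $6$ for $l=11,13$, not larger than for $l=19$.) The obstruction is structural, not a matter of running time, and this is precisely why the paper proves the $(5,5,11)$ and $(5,5,13)$ cases by a genuinely different route in Section~\ref{sec:GRH}: factor $x+\zeta y$ over $\Q(\zeta_5)$, use the Frey curve $Y^2=X^3-5(x^2+y^2)X^2+5H_5(x,y)X$ and level-lowering to force the unit exponent $r=0$ in~\eqref{eqn:ZetaFactor55l} (Proposition~\ref{prop:Modular55lBis}), take norms down to the real quadratic field $K=\Q(\zeta_5+\zeta_5^{-1})$, and apply Chabauty--Coleman to the resulting genus $5$ and $6$ curves $C_{11,0}$ and $C_{13,1}$ over $K$, whose Jacobians do have (GRH) $2$-Selmer rank $1$. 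GRH enters only in that $2$-descent over $K$, where the relevant fields have absolute degree $22$ and $26$. Without this additional modular step and the passage to $\Q(\sqrt{5})$, your argument for $l\in\{11,13\}$ does not go through. (Your disposal of the case $5\mid z$ via Proposition~\ref{prop:SecondCase55l} is fine, as that result is unconditional and uniform in $l$.)
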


\section{Preliminaries}

\subsection{The method of Chabauty-Coleman}\label{sec:chab}

Chabauty-Coleman is a method for bounding
the number of $K$-rational points on a curve of genus $\ge 2$ defined over a number field $K$,
subject to certain conditions. We will need Chabauty-Coleman
for the proof of our Theorems~\ref{thm:55l}, \ref{thm:775}, and \ref{thm:GRH}, and so we provide
in this section a brief sketch of the method. For details
we recommend the expository paper of  
McCallum and Poonen \cite{MP}, as well as Wetherell's thesis \cite{We},
and Coleman's original paper \cite{Co}.

Let $C/K$ be a smooth projective geometrically integral curve of genus
$g \geq 2$, and let $J$ be its Jacobian.
It is convenient to suppose the 
existence of $K$-rational points on $C$ and fix one such point
$P_0 \in C(K)$. We use $P_0$ as the base for our
Abel-Jacobi embedding:
\[
\jmath : C \rightarrow J, \qquad
P \mapsto [P-P_0].
\]
Let $\p$ be a prime of good reduction for $C$ and denote by $K_{\p}$ the $\p$-adic completion of $K$.
Write $\Omega(C/K_{\p})$ for the $K_{\p}$-vector space of regular differentials on
$C$, and $\Omega(J/K_{\p})$ for the corresponding space on $J$. Both these
spaces have dimension $g$, and the Abel-Jacobi embedding induces
an isomorphism $\jmath^* : \Omega(C/K_{\p}) \rightarrow \Omega(J/K_{\p})$;
this is independent of the choice of base point $P_0$, and we shall use
it to identify the two spaces.

The method of Chabauty is based on the integration pairing
\begin{equation}\label{eqn:pairing}
\Omega(C/K_{\p}) \times J(K_{\p}) \rightarrow K_{\p},
\qquad (\omega,D) \mapsto \int_0^D \omega.
\end{equation}
The Mordell-Weil group $J(K)$ is contained in $J(K_{\p})$.
Let $r$ be its rank, and write $\Ann(J(K))$ for
the $K_{\p}$-subspace of $\Omega(C/K_{\p})$ that
annihilates $J(K)$ in the above pairing.
If $r<g$, then this has dimension at least $g-r$.
Suppose $\Ann(J(K))$ is positive dimensional
and let $\omega$ be a non-zero differential
belonging to it. Denote by $\F_{\p}$ the residue
class field of $K_{\p}$, let $p$ be its
characteristic and let $e$ denote the absolute
ramification index of $\p$. We scale $\omega$
so that it reduces to a non-zero differential
$\overline{\omega}$ on the reduction $\tilde{C}/\F_{\p}$. The differential $\omega$ can be used to bound
the number of $K$-rational points $C(K)$. In particular, if $\overline{\omega}$ does not vanish at $\overline{P} \in \tilde{C}(\F_{\p})$ and $e<p-1$, then there is at most one $K$-rational point $P$ on $C$
that reduces to $\overline{P}$ modulo $\p$.

\begin{rem}
In \cite{Siksek} a modified version of the above method is developed where instead of the traditional $r \le g-1$
condition of Chabauty-Coleman the necessary
condition of the new method is $r \le [K:\Q] (g-1)$. However, as it turns out, the \lq classical\rq\ Chabauty-Coleman method sketched above suffices for our purposes.
\end{rem}

\subsection{The modular approach}

Our proofs will make heavy use of the modular approach to Diophantine equations, involving Frey curves, modularity, Galois representations and level-lowering. For an introduction, the reader can consult e.g.~\cite[Chapter 15]{Cohen} or~\cite[Chapter 2]{Dahmen}. 
By a newform of level $N$ we will mean a cuspidal newform of weight $2$ with respect to $\Gamma_0(N)$ (so the character is trivial). A newform is always normalized by default (i.e. the first Fourier coefficient of the expansion at the infinite cusp equals $1$).

\subsection{A standard factorization lemma}

The following simple result will be very useful when we are factorizing $x^5+y^5$ and $x^7+y^7$.

\begin{lem} \label{lem:gcd}
Let $p$ be an odd prime and $x$, $y$ coprime integers. Write 
\[
H_p=\frac{x^p+y^p}{x+y}.
\]
Then $g:=\gcd(x+y,H_p)=1$ or $p$. Consequently, $g=p \Leftrightarrow p|x^p+y^p \Leftrightarrow p|H_p \Leftrightarrow p|x+y$.
Moreover, $p^2 \nmid H_p$.
\end{lem}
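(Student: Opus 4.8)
The plan is to work throughout with the polynomial identity
\[
x^p + y^p = (x+y)\, H_p, \qquad H_p = \sum_{k=0}^{p-1} (-1)^k x^{p-1-k} y^k,
\]
and to reduce modulo $x+y$. Substituting $y \equiv -x \pmod{x+y}$ into the sum, the $k$-th summand becomes $(-1)^k x^{p-1-k}(-x)^k = x^{p-1}$, so each of the $p$ terms is congruent to $x^{p-1}$ and hence
\[
H_p \equiv p\, x^{p-1} \pmod{x+y}.
\]
Since congruent integers share the same gcd with the modulus, this yields $g = \gcd(x+y, H_p) = \gcd(x+y,\, p\, x^{p-1})$. I would then note that $\gcd(x+y, x) = 1$, because any common divisor of $x+y$ and $x$ divides $y$ and hence divides $\gcd(x,y)=1$; therefore $\gcd(x+y, x^{p-1}) = 1$ and $g = \gcd(x+y, p) \in \{1,p\}$, which is the first assertion. (I assume $x+y \neq 0$ so that $H_p$ is genuinely the quotient; the degenerate coprime case $x+y=0$ forces $x=\pm1$, $y=\mp1$ and is checked directly.)

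For the chain of equivalences I would combine this congruence with Fermat's little theorem. From $x^p \equiv x$ and $y^p \equiv y \pmod p$ we get $x^p+y^p \equiv x+y \pmod p$, so $p \mid x^p+y^p \Leftrightarrow p \mid x+y$ immediately. The factorization then gives $p \mid H_p \Rightarrow p \mid x^p+y^p \Rightarrow p \mid x+y$, while the congruence $H_p \equiv p\,x^{p-1} \pmod{x+y}$ gives the converse $p \mid x+y \Rightarrow p \mid H_p$. This closes the loop, and $g=p$ is then equivalent to $p$ dividing both $x+y$ and $H_p$, i.e.\ to $p \mid x+y$.

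The last and most delicate claim, $p^2 \nmid H_p$, requires a second-order refinement. If $p \nmid x+y$ the equivalences just established give $p \nmid H_p$, so there is nothing to prove. In the remaining case $p \mid x+y$ one first notes $p \nmid x$ (else $p \mid x$ and $p \mid x+y$ would force $p \mid y$, against coprimality). Writing $s = x+y$ and expanding $y^k=(s-x)^k$ to first order in $s$, I would sharpen the earlier computation to
\[
H_p \equiv p\, x^{p-1} - \tfrac{p(p-1)}{2}\, x^{p-2}\, s \pmod{s^2}.
\]
Since $p \mid s$ forces $p^2 \mid s^2$, this congruence holds modulo $p^2$; and as the middle coefficient $\tfrac{p(p-1)}{2}$ carries an explicit factor $p$ while $s$ carries another, the second term vanishes modulo $p^2$. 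Hence $H_p \equiv p\, x^{p-1} \pmod{p^2}$, which has $p$-adic valuation exactly $1$ because $p \nmid x$, giving $p^2 \nmid H_p$.

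I expect the main obstacle to be precisely this final step. The first-order reduction $H_p \equiv p\,x^{p-1} \pmod{x+y}$ is essentially free and drives the first two assertions, but ruling out $p^2 \mid H_p$ genuinely needs the next term of the expansion together with the observation that $\tfrac{p(p-1)}{2}\,s$ is divisible by $p^2$ when $p \mid s$ (equivalently, a direct appeal to the lifting-the-exponent formula $v_p(x^p+y^p) = v_p(x+y)+1$). Everything else is bookkeeping with the single congruence and Fermat's little theorem.
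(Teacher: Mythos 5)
Your proof is correct and takes essentially the same route as the paper's: both rest on expanding $H_p$ in powers of $x+y$ (your coefficients $p$ and $\tfrac{p(p-1)}{2}$ are precisely the binomial coefficients $\binom{p}{1}$ and $\binom{p}{2}$ appearing in the paper's one-shot expansion $H_p=\sum_{k=1}^p \binom{p}{k}u^{k-1}y^{p-k}$ with $u=-(x+y)$), reading the gcd claim off the constant term and getting $p^2\nmid H_p$ from the divisibility of all higher-order terms by $p^2$ when $p\mid x+y$. The only cosmetic differences are your appeal to Fermat's little theorem for the implication involving $p\mid x^p+y^p$ (the paper instead reads $H_p\equiv u^{p-1}\pmod p$ off the same expansion when $p\nmid x+y$) and your explicit treatment of the degenerate case $x+y=0$, which the paper leaves implicit.
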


\begin{proof}
Let $u=-(x+y)$, then using the binomial formula we get
\[
H_p=\frac{(y+u)^p-y^p}{u}=\sum_{k=1}^p 
\binom{p}{k}
u^{k-1} y^{p-k}.
\]
Form the expression above we see that $g|py^{p-1}$. Since $\gcd(u,y)=\gcd(x,y)=1$, we get $g|p$.
Furthermore, if $p \nmid u$, then using that 
$p|\binom{p}{k}
$ for $k=1,\ldots, p-1$ we see that $p \nmid H_p$. If $p|u$, then $H_p \equiv p y^{p-1} \pmod{p^2}$, which is nonzero modulo $p^2$ since $p\nmid y$.
\end{proof}

\section{Proof of Theorem~\ref{thm:55l}}\label{sec:55l}

In light of Lemma~\ref{lem:gcd} it is natural to
distinguish two cases. Namely non-trivial
primitive integer solutions to \eqref{thm:55l} with
$5 \nmid z$ on the one hand and those with $5 | z$
on the other hand. For the former case, we relate 
non-trivial
primitive integer solutions to \eqref{eqn:55l} for
some odd prime $l$ to determining 
$\Q$-rational points on the hyperelliptic curve
\begin{equation}\label{eqn:Cl}
C_l: Y^2=20X^l+5.
\end{equation}
Note that this curve has genus $(l-1)/2$ and that
\[
C_l(\Q) \supset \{\infty,(1,\pm 5)\}.
\]

\begin{lem}\label{lem:ReductionToCurves55lCaseI}
Let $l$ be an odd prime. If
\begin{equation}\label{eqn:RatPoints55lCaseI}
C_l(\Q)=\{\infty,(1,\pm 5)\}
\end{equation}
then there are no non-trivial primitive integer solutions to \eqref{eqn:55l} with $5\nmid z$
\end{lem}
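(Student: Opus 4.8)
The plan is to convert a hypothetical non-trivial primitive solution of \eqref{eqn:55l} with $5\nmid z$ into a rational point on $C_l$ that is not among those listed in \eqref{eqn:RatPoints55lCaseI}, thereby deriving a contradiction. First I would factor $x^5+y^5=(x+y)H_5$, with $H_5=(x^5+y^5)/(x+y)$ as in Lemma~\ref{lem:gcd}. Primitivity gives $\gcd(x,y)=1$, and since $z^l=x^5+y^5$ the condition $5\nmid z$ forces $5\nmid x^5+y^5$; Lemma~\ref{lem:gcd} (with $p=5$) then yields $\gcd(x+y,H_5)=1$. As $(x+y)H_5=z^l$ is a product of two coprime integers equal to an $l$-th power and $l$ is odd, each factor is itself an $l$-th power, so I may write $x+y=a^l$ and $H_5=b^l$ with $a,b\in\Z$ and $ab=z$. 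Note that $z\neq 0$ (since $5\nmid z$), hence $x+y\neq 0$ and $a\neq 0$.

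Next I would exploit a direct algebraic identity. Setting $s=x+y$ and $t=xy$, one computes $H_5=s^4-5s^2t+5t^2$, and therefore the key relation $5s^4+20H_5=25(s^2-2t)^2=25(x^2+y^2)^2$. Putting $X=b/a^4$ and $Y=5(x^2+y^2)/(x+y)^2$, one has $X^l=b^l/a^{4l}=H_5/(x+y)^4$, and so $Y^2=25(x^2+y^2)^2/(x+y)^4=\bigl(5(x+y)^4+20H_5\bigr)/(x+y)^4=20X^l+5$. Thus $(X,Y)$ is a $\Q$-rational point on $C_l$, and since $a\neq 0$ it is an affine point, so it cannot be $\infty$.

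Under the hypothesis \eqref{eqn:RatPoints55lCaseI} the only affine rational points are $(1,\pm 5)$, so necessarily $X=1$. This forces $b=a^4$, i.e. $H_5=b^l=a^{4l}=(x+y)^4=s^4$, whence $5s^2t-5t^2=0$ and $t(s^2-t)=0$. Non-triviality gives $t=xy\neq 0$, so $s^2=t$, i.e. $x^2+xy+y^2=0$, which has no solution in nonzero integers; this contradiction completes the argument. The lemma is a reduction rather than a computation, so the only real subtlety is the \emph{tightness} of the correspondence: I must verify that the three exceptional points of \eqref{eqn:RatPoints55lCaseI} arise only from excluded or trivial configurations—$\infty$ from $a=0$ (i.e. $z=0$) and $(1,\pm 5)$ from $x^2+xy+y^2=0$—so that any genuine non-trivial primitive solution with $5\nmid z$ is guaranteed to produce a fourth rational point on $C_l$.
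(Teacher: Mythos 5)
Your proof is correct and is essentially the paper's own argument: the same coprime factorization $x+y=a^l$, $H_5=b^l$, the same identity (your relation $5s^4+20H_5=25(x^2+y^2)^2$ is just identity~\eqref{eqn:identity} scaled by $5$), the same point $\bigl(b/a^4,\,5(x^2+y^2)/(x+y)^2\bigr)$ on $C_l$, and the same endgame ruling out $X=1$ via $xy=0$. The only cosmetic difference is that you derive the identity and the final contradiction explicitly in terms of $s=x+y$, $t=xy$, where the paper cites the identity and leaves the last step to the reader.
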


\begin{proof}
Suppose that $x$, $y$, $z$ are non-zero coprime integers satisfying \eqref{eqn:55l}. Then
\begin{equation}\label{eqn:55lfactors}
(x+y)H_5=z^l.
\end{equation}
For any odd prime $p$, we have that $H_p$ is a symmetric binary form of even degree in $x, y$, hence a binary form in $x^2+y^2$ and $(x+y)^2$. For $p=5$ we have explicitly
\begin{equation}\label{eqn:identity}
5(x^2+y^2)^2=4 H_5+(x+y)^4.
\end{equation}
We assume $5 \nmid z$.
By Lemma~\ref{lem:gcd} we have $\gcd(x+y,H_5)=1$. Hence \eqref{eqn:55lfactors} yields
\begin{equation}\label{eqn:Factor55lCaseI}
x+y=z_1^l, \qquad H_5= z_2^l
\end{equation}
where $z_1$, $z_2$ are coprime non-zero integers satisfying $z=z_1 z_2$. 
Using identity~\eqref{eqn:identity} we have
\[
5 (x^2+y^2)^2=4 z_2^l+z_1^{4l}.
\]
Multiplying both sides by $5/z_1^{4l}$, we see that
\[P:=\left(\frac{z_2}{z_1^4}, \frac{5(x^2+y^2)}{z_1^{2l}}  \right) \in C_l(\Q).\]
Since $z_1 \neq 0$, we have $P \neq \infty$. If $P=(1,\pm 5)$, then we see that $z_2=1$ and $z_1=\pm 1$, which by~\eqref{eqn:Factor55lCaseI} leads to $xy=0$. A contradiction which proves the lemma.
\end{proof}

We expect that \eqref{eqn:RatPoints55lCaseI} holds for all primes $l \geq 7$ (it holds for $l=5$, but we do not need this here). The cases we can prove at the present are summarized as follows.

\begin{prop}\label{prop:RatPoints55lCaseI}
If $l=7$ or $l=19$, then
\[C_l(\Q)=\{\infty,(1,\pm 5)\}.\]
\end{prop}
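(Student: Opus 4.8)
The plan is to apply the Chabauty--Coleman method sketched in Section~\ref{sec:chab} to the hyperelliptic curves $C_l$ over $K=\Q$, for $l=7$ (genus $g=3$) and $l=19$ (genus $g=9$). Since $C_l:Y^2=20X^l+5$ has a single, rational, Weierstrass point at infinity, the Abel--Jacobi map based at $\infty$ is available, and $\Q$-points reduce into residue disks that can be analysed one at a time. The method requires a non-zero regular differential annihilating $J(\Q)$ in the integration pairing~\eqref{eqn:pairing}, and by the dimension count in Section~\ref{sec:chab} such a differential exists as soon as $r:=\rank J(\Q)<g$. So the first and decisive task is to bound the Mordell--Weil rank of $J=\Jac(C_l)$.

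To bound the rank I would carry out a descent. Writing $f(X)=20X^l+5=5(4X^l+1)$, the polynomial $4X^l+1$ is irreducible over $\Q$ (as $l$ is an odd prime and $-1/4$ is not an $l$-th power), so the étale algebra $L=\Q[X]/(f)$ is the number field $\Q(\theta)$ with $4\theta^l=-1$. A $2$-descent via the $x-T$ map $C_l(\Q)\to L^*/(L^*)^2$ (possibly refined by a descent adapted to the extra automorphisms $(X,Y)\mapsto(\zeta_l X,Y)$ of $C_l$ defined over $\Q(\zeta_l)$) then bounds $r$ in terms of the class group and unit group of $L$ and its subfields. The known points $\infty,(1,\pm5)$ only produce the classes $0,\pm[(1,5)-\infty]$ in $J(\Q)$, which, using $(1,5)+(1,-5)\sim 2\infty$, generate a cyclic group that one expects to be torsion; accordingly the target is to prove $r=0$, so that $J(\Q)$ is finite and \emph{every} regular differential annihilates it.

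Granting $r=0$, I would choose a small odd prime $p\neq2,5$ of good reduction for $C_l$ (so that $e=1<p-1$ holds automatically), compute $\tilde C_l(\F_p)$, and run the residue-disk analysis: on each disk the Coleman integral $P\mapsto\int_\infty^P\omega$ is a power series whose number of zeros is controlled once $\omega$ is scaled to reduce to a non-zero $\overline{\omega}$. Using the freedom afforded by $r=0$ to choose $\omega$ not vanishing at a given $\overline{P}$, each such disk contains at most one $\Q$-point; the three distinct disks of $\infty,(1,\pm5)$ are thereby forced to contain exactly these points, and for the remaining disks I would check that the relevant Coleman integral is a $p$-adic unit, so that the power series has no zero, if necessary combining two primes to eliminate every residue class at once. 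This would yield $C_l(\Q)=\{\infty,(1,\pm5)\}$.

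The main obstacle is the rank bound, and it is the genus $9$ case $l=19$ that is hard: the descent rests on class group and unit group computations in a degree-$19$ field, and these must be made \emph{unconditional}. That the result is stated unconditionally for $l=19$ but only under GRH for the nearby exponents $l=11,13$ strongly suggests that the real difficulty is whether the relevant class numbers can be certified without GRH, rather than the size of the genus; the plan must therefore include producing such certificates for the fields arising when $l=7$ and $l=19$.
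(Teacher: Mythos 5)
Your overall framework (torsion computation, $2$-descent rank bound, then Chabauty--Coleman in residue disks) matches the paper's, but your pivotal claim is false and the proof collapses at that point. You assert that the class $D=[(1,5)-\infty]$ is ``expected to be torsion'' and that the target of the descent is $r=0$. In fact the opposite holds: the paper first shows $J_l(\Q)_{\mathrm{tors}}=\{0\}$ (by injecting torsion into $J_l(\F_{p_1})$ and $J_l(\F_{p_2})$ with $\gcd(\#J_l(\F_{p_1}),\#J_l(\F_{p_2}))=1$), so the non-zero class $D$ has \emph{infinite} order and $\rank J_l(\Q)\geq 1$. The relation $(1,5)+(1,-5)\sim 2\infty$ only says $[(1,-5)-\infty]=-D$; it does not make $D$ torsion. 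Consequently no descent can ever certify $r=0$, and your plan to use the luxury of rank $0$ (``every regular differential annihilates $J(\Q)$'') is unavailable. The actual situation is $\dim_{\F_2}\Sel^{(2)}(\Q,J_l)=1$ for $l=7,19$, giving $\rank J_l(\Q)=1$ exactly, with $D$ generating a finite-index subgroup. One must then \emph{compute} the annihilator: the paper evaluates the Coleman integrals $\int_0^D X^i\,dX/Y$ at $p=3$, extracts a differential $\omega_l=3\alpha_l\,dX/Y+X^{g_l-1}\,dX/Y$ annihilating $J_l(\Q)$, and checks its reduction $X^{g_l-1}\,dX/Y$ is nonvanishing on all of $C_l(\F_3)$; the one residue disk containing no known rational point is then eliminated by observing it contains a $3$-adic Weierstrass point $(\gamma,0)$, $\gamma^l=-1/4$, whose class has order $2$ in $J_l(\Q_3)$ and hence lies in the kernel of the integration pairing, forcing a contradiction if a rational point lay in that disk.

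Your closing diagnosis of the difficulty is also misplaced. The $2$-descent here is Stoll's algorithm over $\Q$ (as implemented in {\tt MAGMA}), and for $l=7$ and $l=19$ it ran \emph{unconditionally} (roughly $0.4$ seconds and $30.5$ hours respectively); no class-number certificate for a degree-$19$ field needs to be produced for this proposition. The reason $l=11,13$ are not handled the same way is not GRH but that their $2$-Selmer rank over $\Q$ equals $2$, which together with $\rank\geq 1$ fails to pin the rank down to a value where classical Chabauty with the single known generator applies; GRH enters the paper only later, for descents over number fields attached to other exponents.
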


A proof, using $2$-descent on hyperelliptic Jacobians and the method of Chabauty-Coleman, is given in Section~\ref{sec:Chabauty55l} below. In a similar fashion we can reduce proving the nonexistence of non-trivial primitive integer solutions with $5 | z$
(to \eqref{eqn:55l} for some odd prime $l$) to finding $\Q$-rational points on a twist of $C_l$; see Section~\ref{sec:Discussion}.
We can however deal with this case in a uniform manner for all primes $l \geq 7$ using the modular method; see Section~\ref{sec:Modular55l}. Taking into account previously solved small exponent cases, we have in fact a complete solution in the $5|z$ case.

\begin{prop}\label{prop:SecondCase55l}
Let $l\geq 2$ be an integer. There are no non-trivial primitive integer solutions to~\eqref{eqn:55l} with $5 | z$
\end{prop}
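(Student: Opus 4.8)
The plan is to reduce to prime exponents and then apply the modular method, exploiting that every primitive trivial solution with $5\mid z$ lies on the degenerate locus $z=0$, where the Frey curve becomes singular. First I would reduce to $l\ge 7$ prime: if \eqref{eqn:55l} has a non-trivial primitive solution with $5\mid z$ for some integer $l\ge 2$, then for any prime $p\mid l$ the triple $(x,y,z^{l/p})$ is a non-trivial primitive solution of signature $(5,5,p)$ with $5\mid z^{l/p}$. The cases $p=2,3$ are excluded by \cite{Poonen98} and $p=5$ by Fermat's Last Theorem \cite{Wiles95}, so it suffices to derive a contradiction for $l\ge 7$ prime.

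Next I would record the arithmetic at $5$. Since $5\mid z$ we have $5\mid x^5+y^5$, so $5\mid x+y$ by Lemma~\ref{lem:gcd}, which also gives $\gcd(x+y,H_5)=5$ and $5^2\nmid H_5$. Setting $v=\ord_5(z)\ge 1$ and splitting \eqref{eqn:55lfactors} into its coprime $5$-free parts as in the proof of Lemma~\ref{lem:ReductionToCurves55lCaseI} (using that $l$ is odd to absorb signs), I obtain
\[
x+y=5^{lv-1}s_1^l,\qquad H_5=5\,s_2^l,\qquad z=5^v s_1 s_2,
\]
with $s_1,s_2$ coprime and prime to $5$; in particular $\ord_5(x+y)=lv-1\ge 6$. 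I would then attach to $(x,y)$ the Frey curve $E/\Q$ of signature $(5,5,l)$ whose existence is recalled in the introduction, and compute its minimal discriminant and conductor $N$. Away from $2$ and $5$ the curve has multiplicative reduction precisely at the primes dividing $s_1s_2$, where $l\mid\ord_p(\Delta_E)$, while at $5$ the strong divisibility $\ord_5(x+y)=lv-1$ forces a reduction type that does not depend on the particular solution.

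By modularity over $\Q$ \cite{Wiles95} and the irreducibility of $\bar\rho_{E,l}$ for $l\ge 7$ (the curve is non-CM and has a prime of multiplicative reduction, so Mazur's bounds on rational isogenies apply), Ribet's level-lowering theorem \cite{Cohen} yields a weight-$2$ newform $f$ of some level $N_0$ supported only on $\{2,5\}$ together with a prime $\gl\mid l$ of its coefficient field such that $\bar\rho_{E,l}\cong\bar\rho_{f,\gl}$. The crucial observation is that $N_0$ is the same for every hypothetical solution, and that the only primitive trivial solutions with $5\mid z$ are $(\pm1,\mp1,0)$; there $x+y=0$ and $E$ degenerates to a singular curve with multiplicative reduction rather than to a smooth curve with complex multiplication, which pins $N_0$ down to a single small value that I would compute explicitly. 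One then inspects the finite set of newforms of level $N_0$: if it is empty, the existence of $f$ is already contradictory for every $l$; otherwise each candidate is eliminated by choosing an auxiliary prime $q\nmid 2\cdot 5\cdot l$ and comparing $a_q(f)$ with the finitely many possible values of $a_q(E)$ through the congruence forced by $\bar\rho_{E,l}\cong\bar\rho_{f,\gl}$.

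I expect the main obstacle to be exactly this last step carried out uniformly in $l$, namely ruling out the mod-$\gl$ congruence for every prime $l\ge 7$ at once. The cleanest scenario, and the one suggested by the degeneration of $E$ at $z=0$, is that $X_0(N_0)$ has genus $0$, so that there are no newforms of level $N_0$ and the contradiction is immediate for all $l$; should a few newforms survive, the difficulty shifts to showing that the Hecke-eigenvalue congruences they would impose are incompatible with the constraints coming from $5\mid x+y$ at the chosen auxiliary primes.
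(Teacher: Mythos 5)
Your overall strategy (reduce to primes $l\geq 7$, factor via Lemma~\ref{lem:gcd}, attach Kraus's Frey curve, level-lower to a level supported on $\{2,5\}$, then eliminate newforms) is the paper's strategy, and your reduction to prime exponents and the factorization at $5$ are correct. The genuine gap is in the step you hope is easy: the lowered level is \emph{not} newform-free, so there is no immediate contradiction, and the elimination of the surviving newforms --- which you defer --- is precisely the substance of the paper's proof. Concretely, the paper first replaces Kraus's curve $E'_{x,y}$ by its quadratic twist by $-5$ (possible exactly because $5 \mid H_5$ in this case), which turns additive reduction at $5$ into multiplicative reduction and brings the lowered level down to $2^\alpha\cdot 5 \in \{10,40,80\}$; without this twist your $N_0$ would be $2^\alpha\cdot 5^2 \in \{50,200,400\}$, with even more newforms to kill. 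At levels $40$ and $80$ there are three rational newforms, corresponding to $E_{40a1}$, $E_{80a1}$, $E_{80b1}$, so your ``cleanest scenario'' fails.

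Moreover, your fallback --- one auxiliary prime $q$ per surviving newform, comparing $a_q(f)$ with the finitely many possible $a_q(E)$ --- cannot work uniformly in $l$ as stated: such a comparison only excludes primes $l$ not dividing a fixed nonzero integer, so some $l$ always survive and need separate treatment. This is not hypothetical: in the paper, $E_{40a1}$ and $E_{80a1}$ die at $q=3$ (they have full $2$-torsion, hence $a_3=0$, while the Frey traces lie in $\{\pm 2,\pm 4\}$), but $E_{80b1}$ survives the plain trace test at $q=43$ exactly when $l=17$, and that case is only eliminated by the method of Kraus: one works at $p=103\equiv 1 \pmod{17}$ and imposes that $5(a+b)$ and $H_5(a,b)/5$ be $17$-th powers in $\F_p^*$ (your phrase ``constraints coming from $5\mid x+y$'' gestures at this, but it must actually be carried out, and it is where the proof could have failed). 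A smaller but real issue: irreducibility of $\rho_l^{x,y}$ for $l=7$ and $l=13$ does not follow from ``non-CM plus a multiplicative prime plus Mazur''; the paper uses the rational $2$-isogeny of the Frey curve to produce a point on $X_0(2l)$ and, for $l=7$, must rule out two explicit $j$-invariants (Lemma~\ref{lem:Irreducibility55l}).
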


Trivially, Lemma \ref{lem:ReductionToCurves55lCaseI} and Propositions \ref{prop:RatPoints55lCaseI} and \ref{prop:SecondCase55l} together imply Theorem~\ref{thm:55l}.

\subsection{Rational points on $C_l$}\label{sec:Chabauty55l}

Let $J_l$ denote the Jacobian of $C_l$ and $g_l=(l-1)/2$ the genus of $C_l$ (which equals the dimension of $J_l$).
In order to use Chabauty-Coleman to determine the $\Q$-rational points on $C_l$ for some $l$, it is necessary that the Chabauty condition, $\rank J_l(\Q) < g_l$, is satisfied and we need to compute a subgroup 
of finite index in the Mordell-Weil group $J_l(\Q)$.

Before we go into the rank computations, we start with a description of  the torsion subgroup $J_l(\Q)_\mathrm{tors}$ of $J_l(\Q)$. The curve $C_l$, and hence its Jacobian $J_l$, has good reduction away from $2$, $5$, $l$.
For any odd prime $p$ of good reduction, the natural map
\[
J_l(\Q)_\mathrm{tors} \rightarrow J_l(\F_p)
\]
is injective. In the rest of this section, $l$ will always stand for a prime in the range $7 \leq l \leq 19$.
Using {\tt MAGMA} we find for every prime $l$ in our range, two primes $p_1\not=p_2$ distinct from $2,5,$ or $l$ such that
\[
\gcd(\# J_l(\F_{p_1}), \# J_l(\F_{p_2}))=1.
\]
This shows that for all these primes $l$ we have $J_l(\Q)_\mathrm{tors}=\{0\}$. To be concrete, for $l=7,11,13,17,19$ we can take $(p_1,p_2)=(3,43)$, $(13,23)$, $(3,53)$, $(3,103)$, $(7,191)$ respectively.

As for the rank computations, {\tt MAGMA} includes implementations by Nils Bruin and Michael Stoll
of $2$-descent on Jacobians of hyperelliptic curves over number fields;
the algorithm is detailed in Stoll's paper \cite{Stoll}. Using this we were
able to compute the $2$-Selmer ranks of $J_l/\Q$ for the primes $l$ in our
range (and no further, not even assuming GRH). The values are given in Table
\ref{table:RankBoundsC} below together with the time it took to compute them
on a machine with 2 Intel Xeon dual
core CPUs at 3.0 GHz.
We want to stress that the {\tt MAGMA} routine {\tt TwoSelmerGroup} involved,
makes use of the pseudo-random number generator of {\tt MAGMA}. So the exact
time also depends on the seed. From the usual exact sequence 
\[
0 \to
J_l(\Q)/2J_l(\Q) \to \Sel^{(2)}(\Q,J_l) \to \Sha(\Q,J_l)[2] \to 0
\]
together with the fact that $J_l(\Q)$ has no 2-torsion, we get
\begin{equation}\label{eqn:SelmerBound}
\rank J_l(\Q) = \dim_{\F_2}\Sel^{(2)}(\Q,J_l)-\dim_{\F_2}\Sha(\Q,J_l)[2] \leq \dim_{\F_2}\Sel^{(2)}(\Q,J_l).
\end{equation}
Let $D=[(1,5)-\infty]$, then $D$ is a non-zero element of $J_l(\Q)$ and therefore (remembering that $J_l(\Q)_\mathrm{tors}=\{0\}$)
has infinite order. This shows that
\[\rank J_l(\Q) \geq 1.\]
In particular, we get from the $2$-Selmer ranks of $J_l/\Q$ in Table \ref{table:RankBoundsC} that for $l=7,19$ we have
$\rank J_l(\Q)=1$ and $D$ generates a subgroup of finite index in $J_l(\Q)$. 

\begin{table}[h!]
\caption{Rank bounds for the Jacobian of $C_l$}
\begin{tabular}{c|c|c}
$l$ & $\dim_{\F_2}\Sel^{(2)}(\Q,J_l)$ & Time \\
\hline
7 & 1 & 0.4s \\
11 & 2 & 3s \\
13 & 2 & 23s \\
17 & 2 & 4821s $\approx$ 1.3h \\
19 & 1 & 109819s $\approx$ 30.5h \\
\end{tabular}
\label{table:RankBoundsC}
\end{table}

\begin{rem}
Assume that $\Sha(\Q,J_l)$ is finite. As $C_l(\Q) \ne \emptyset$,
it follows from the work of Poonen and Stoll \cite{PS}
that the Cassels-Tate pairing on $\Sha(\Q,J_l)$
is alternating, and so $\# \Sha(\Q,J_l)$ is a square.
In this
case, we get from the equality in~\eqref{eqn:SelmerBound} that $\rank J_l(\Q)$
and $\dim_{\F_2}\Sel^{(2)}(\Q,J_l)$ have the same parity. Together with the
fact that $\rank J_l(\Q) \geq 1$ we now see that we can read of
$\rank J_l(\Q)$ from Table~\ref{table:RankBoundsC} (still assuming the
finiteness of $\Sha(\Q,J_l)$, or actually just of the $2$-part).  \end{rem}

\begin{rem}
Instead of using a $2$-descent on $J_l/\Q$, we can also apply \cite{StollI},
\cite{StollII} to get an upper bound for $\rank J_l(\Q)$ using a $(1-\zeta_l)$-descent 
on $J_l/\Q(\zeta_l)$. It turns out that for $l=7,11$ this gives the
same upper bound for $\rank J_l(\Q)$ as given by Table~\ref{table:RankBoundsC}
(namely $1$ and $2$ respectively). For $l=13,17,19$ however, the upper bounds
obtained from a $(1-\zeta_l)$-descent are strictly larger than the bounds
given by Table~\ref{table:RankBoundsC}.  \end{rem}

For $l=7$, $19$ both the Chabauty condition is satisfied and we have
explicitly found a subgroup of finite index in the Mordell-Weil group
$J_l(\Q)$.  We are thus in a position to use the method of Chabauty-Coleman to
determine $C_l(\Q)$ for these $l$.

\begin{proof}[Proof of Proposition~\ref{prop:RatPoints55lCaseI}]
Let $l \in \{7,19\}$ and let $J_l$ denote, as before, the Jacobian of $C_l$. We
already know that $\rank J_l(\Q)=1$ and $D:=[(1,5)-\infty] \in J_l(\Q)$
generates a subgroup of finite index in $J_l(\Q)$.  We shall apply the method
of Chabauty-Coleman, sketched in Section~\ref{sec:chab}, with $p=3$. A basis
for $\Omega(C_l/\Q_3)$ is given by $X^i \frac{dX}{Y}$ with $i=0, 1, \ldots,
g_l-1=(l-3)/2$. 
For $l=7$ we find
\[\int_0^D  \frac{dX}{Y}\equiv 3\cdot 40, \qquad \int_0^D  X\frac{dX}{Y}\equiv 3^2 \cdot 25, \qquad \int_0^D  X^2 \frac{dX}{Y}\equiv 3^2 \cdot 13 \pmod{3^5}.\]
For $l=19$ we find the following congruences modulo ${3^5}$,
\[\left(\int_0^D X^k \frac{dX}{Y}\right)_{k=0}^8 \equiv (3\cdot 43, 3 \cdot 76, 3 \cdot 16, 3 \cdot 22, 3 \cdot 65, 3 \cdot 74, 3^2 \cdot 17, 3^2 \cdot 23, 3^2 \cdot 22 );\]
for hints on the evaluation of $p$-adic integrals see \cite{We} (especially
Section 1.9).  Using these values, one can easily approximate
 an explicit basis for
$\Ann(J_l(\Q))$ in both cases. However, it is enough to notice that
\[
\ord_3\left(\int_0^D \frac{dX}{Y}\right)=1, \qquad \ord_3\left(\int_0^D X^{g_l-1} \frac{dX}{Y}\right)=2.
\]
Thus we can find some $\omega_l \in \Ann(J_l(\Q))$ of the form 
\[\omega_l=3\alpha_l \frac{dX}{Y}+ X^{g_l-1} \frac{dX}{Y}, \quad \alpha_l \in \Z_3, \quad \ord_3(\alpha_l)=0.\]
We reduce to obtain a differential on $\tilde{C_l}/\F_3$,
\[\overline{\omega}_l = X^{g_l-1} \frac{dX}{Y}.\]
The differential $\overline{\omega}_l$ does not vanish at any of the four points of $C_l(\F_3)$:
\[
C_l(\F_3)=\{\overline{\infty},(\overline{1},\overline{1}),(\overline{1},\overline{2}),
(\overline{2},\overline{0})\}.
\]
It follows that for each $\tilde{P} \in C_l(\F_3)$ 
there is at most one  $P \in C_l(\Q)$ that reduces to $\tilde{P}$. Now the rational points
$\infty$, $(1,5)$ and $(1,-5)$ respectively reduce to
$\overline{\infty}$, $(\overline{1},\overline{2})$, $(\overline{1},\overline{1})$.
To complete the proof it is sufficient to show that no $\Q$-rational
point reduces to $(\overline{2},\overline{0})$.
One way of showing this is to use the Mordell-Weil sieve \cite{BS3}. Here is
a simpler method. Note that $(\overline{2},\overline{0})$ lifts
to $(\gamma,0) \in C_l(\Q_3)$ where $\gamma$
is the unique element in $\Q_3$ satisfying $\gamma^l=-1/4$. 
Now the divisor $D^\prime=(\gamma,0)-\infty$ has order
$2$ in $J_l(\Q_p)$, and hence belongs to the left-kernel of the pairing~\eqref{eqn:pairing}.
If there is a $\Q$-rational point that reduces to $(\overline{2},\overline{0})$
then that would force $\overline{\omega}_l$ to vanish at $(\overline{2},\overline{0})$.
This completes the proof. Further details can be found in our {\tt MAGMA} script {\tt Chabauty55l.m}.
\end{proof}

\subsection{A modular approach to $x^5+y^5=z^l$ when $5 \mid z$}\label{sec:Modular55l}

The purpose of this section is to give a proof of Proposition~\ref{prop:SecondCase55l}. Let $(x,y,z)$ be a primitive integer solution to~\eqref{eqn:55l} with $z\not=0$ for some prime $l \geq 7$ and assume $5 | z$.
In this case Lemma~\ref{lem:gcd} gives us $\gcd(x+y,H_5)=5$ and $5^2 \nmid H_5$. Hence \eqref{eqn:55lfactors} yields
\[
5(x+y)=z_1^l, \qquad H_5=5 z_2^l
\]
where $z_1$, $z_2$ are coprime non-zero integers satisfying $z=z_1 z_2$.

Kraus \cite[pp. 329--330]{Kraus99} constructed a Frey curve for
the equation $x^5+y^5=z^l$. Following Kraus,
we associate to our solution $(x,y,z)$ to~\eqref{eqn:55l}
the Frey elliptic curve
\[E'_{x,y}: \; Y^2= X^3+5(x^2+y^2) X^2 + 5 H_5(x,y) X.\]
Since we are assuming that $5|z$ we have that 
$5|H_5(x,y)$. So the quadratic twist over $\Q(\sqrt{-5})$ given by the following model has integer coefficients.
\[E_{x,y}: Y^2=X^3-(x^2+y^2)X^2+\frac{H_5(x,y)}{5} X.\]

We record some of the invariants of $E_{x,y}$:
\begin{align*}
c_4 &=2^4 \cdot 5^{-1} \cdot (2x^4 + 3x^3y + 7x^2y^2 + 3xy^3 + 2y^4) \in\Z, \\
c_6 &=2^5\cdot 5^{-1} \cdot (x^2 + y^2)(x^4 + 9x^3y + 11x^2y^2 + 9xy^3 + y^4) \in \Z,\\
\Delta &=2^4 \cdot 5^{-3} \cdot (x+y)^4 H_5^2=2^4 \cdot 5^{-5} \cdot \left(z_1^2 z_2\right)^{2l} \in \Z,\\
j  & = \frac{2^8 \cdot (2x^4 + 3x^3y + 7x^2y^2 + 3xy^3 + 2y^4)^3}{(x+y)^4 H_5^2}.
\end{align*}

\begin{lem}\label{lem:DiscAndN55l}
The conductor $N$ and minimal discriminant $\Delta_{\mathrm{min}}$ of $E_{x,y}$ satisfy
\begin{itemize}
\item $N=2^\alpha 5 \Rad_{\{2,5\}}(z)$ where $\alpha \in \{1,3,4\}$ and $\Rad_{\{2,5\}}(z)$
is the product of the distinct primes not equal to $2$ or $5$ dividing $z$;
\item  If $2 \nmid z$, then $\Delta_{\mathrm{min}}=\Delta$ and if $2 \mid z$, then $\Delta_{\mathrm{min}}=\Delta/2^{12}$.
\end{itemize}
\end{lem}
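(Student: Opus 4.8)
The plan is to run through the primes of bad reduction one at a time, at each prime comparing the valuations of $c_4$ and $\Delta$ (and where necessary $c_6$) to read off the reduction type by the standard criterion (multiplicative reduction exactly when $\ord_p(c_4)=0<\ord_p(\Delta)$, additive otherwise) and by Tate's algorithm at the delicate primes. Write $\phi=2x^4+3x^3y+7x^2y^2+3xy^3+2y^4$ and $\psi=x^4+9x^3y+11x^2y^2+9xy^3+y^4$, so that $c_4=2^4 5^{-1}\phi$ and $c_6=2^5 5^{-1}(x^2+y^2)\psi$. Since $\Delta=2^4 5^{-5}(z_1^2z_2)^{2l}$, the bad primes lie in $\{2,5\}\cup\{p:p\mid z\}$; recalling from the factorization that $5\mid z_1$ and $5\nmid z_2$, we also have $\ord_5(x+y)=l\,\ord_5(z_1)-1\ge l-1\ge 6$.

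For a prime $p\mid z$ with $p\ne 2,5$ I would show the reduction is multiplicative with $\ord_p(\Delta_{\mathrm{min}})=\ord_p(\Delta)$ by checking $\ord_p(c_4)=\ord_p(\phi)=0$; since $p\nmid c_4$ the given model is automatically minimal at $p$. Two elementary congruences do the work. Specializing $y\mapsto -x$ gives $\phi\equiv 5x^4\pmod{x+y}$, while the identity $\phi=2H_5+5xy(x^2+xy+y^2)$ together with $H_5\equiv -y^3(x+y)\pmod{x^2+xy+y^2}$ (obtained by reducing $H_5$ modulo $x^2+xy+y^2$) controls $\phi$ in the two subcases. If $p\mid z_1$ then $p\mid x+y$, so $\phi\equiv 5x^4\not\equiv 0$ since $p\nmid 5x$; if $p\mid z_2$ then $p\mid H_5$, so $\phi\equiv 5xy(x^2+xy+y^2)$, where $p\nmid xy$ (by coprimality and $p\mid x^5+y^5$) and where $p\mid x^2+xy+y^2$ would force $p\mid y^3(x+y)$, hence $p\mid x+y$, contradicting $p\mid z_2$. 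The prime $5$ is handled the same way: writing $\phi=5x^4+(x+y)g(x,y)$ with $g\in\Z[x,y]$, the bound $\ord_5(x+y)\ge 6$ gives $\ord_5(\phi)=1$, so $\ord_5(c_4)=0<\ord_5(\Delta)$ and again the reduction is multiplicative of exponent $1$ with the model minimal at $5$. This yields the odd part of $N$, namely $5\,\Rad_{\{2,5\}}(z)$, and shows $\ord_p(\Delta_{\mathrm{min}})=\ord_p(\Delta)$ for every odd $p$.

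It remains to analyse $p=2$, where I would first note that $2\mid z$ is equivalent to $x,y$ both odd (coprimality excludes both even, and one even and one odd makes $x^5+y^5$ odd). If $x,y$ are both odd, a short computation modulo small powers of $2$ gives $\ord_2(\phi)=0$, $\ord_2(x^2+y^2)=1$ and $\ord_2(\psi)=0$, hence $\ord_2(c_4)=4$, $\ord_2(c_6)=6$ and $\ord_2(\Delta)\ge 4+2l\ge 18$. Then $\ord_2(j)=3\ord_2(c_4)-\ord_2(\Delta)<0$, so the curve has potentially multiplicative reduction; checking via Tate's algorithm (equivalently Kraus's criteria) that the model is non-minimal at $2$ and that the minimal model is a Tate curve gives $\ord_2(\Delta_{\mathrm{min}})=\ord_2(\Delta)-12$ and conductor exponent $1$. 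Combined with the odd-prime analysis this produces $\Delta_{\mathrm{min}}=\Delta/2^{12}$ and $\alpha=1$ in this case. If instead $2\nmid z$, so exactly one of $x,y$ is even, then $\ord_2(\Delta)=4<12$ forces the model to be minimal at $2$, whence $\Delta_{\mathrm{min}}=\Delta$; moreover $\phi\equiv xy(x^2+xy+y^2)\equiv 0\pmod 2$ gives $\ord_2(c_4)\ge 5>\ord_2(\Delta)$, so the reduction is additive.

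The main obstacle is pinning down the conductor exponent in this last, additive, case. By Ogg's formula $f_2=\ord_2(\Delta_{\mathrm{min}})+1-m=5-m$, where $m$ is the number of components of the special fibre, so $f_2\le 4$, while additivity gives $f_2\ge 2$; the claim $f_2\in\{3,4\}$ amounts to $m\in\{1,2\}$, and the real work is to exclude $m=3$ (which would give $f_2=2$). I expect this to require running Tate's algorithm at the wildly ramified prime $2$, using the explicit valuations $\ord_2(c_4)\ge 5$, $\ord_2(c_6)=5$, $\ord_2(\Delta)=4$ and the rational $2$-torsion point $(0,0)$ on $E_{x,y}$ (note that the quadratic factor $X^2-(x^2+y^2)X+H_5/5$ has discriminant $(x+y)^4/5$), or equivalently consulting Papadopoulos's classification tables in residue characteristic $2$. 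Since the outcome genuinely depends on the congruence classes of $x$ and $y$ modulo higher powers of $2$, both values $3$ and $4$ occur, which is precisely why the statement records only the membership $\alpha\in\{3,4\}$; this wild bookkeeping at $2$ is the step I anticipate being most involved.
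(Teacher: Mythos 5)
Your proposal is correct and its skeleton is the same as the paper's: locate the bad primes via $\Delta$, prove multiplicative reduction and minimality at every odd bad prime by showing $\ord_p(c_4)=0$, and settle $p=2$ by comparing $\ord_2(c_4)$, $\ord_2(c_6)$, $\ord_2(\Delta)$ and appealing to Tate's algorithm. The one genuinely different ingredient is how you get $\ord_p(c_4)=0$ at odd primes: the paper handles all $p\neq 2,5$ in one stroke by computing that the resultant of $x^5+y^5$ and $\phi=2x^4+3x^3y+7x^2y^2+3xy^3+2y^4$ equals $5^5$, whereas you split into $p\mid z_1$ and $p\mid z_2$ and use the explicit identities $\phi\equiv 5x^4\pmod{x+y}$, $\phi=2H_5+5xy(x^2+xy+y^2)$, and $H_5\equiv -y^3(x+y)\pmod{x^2+xy+y^2}$; these all check out, and your treatment of $p=5$ (writing $\phi=5x^4+(x+y)g$ and using $\ord_5(x+y)\geq l-1$) in fact supplies the justification the paper leaves implicit for its terse claim that $5\nmid c_4$. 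Your bound $\ord_2(\Delta)\geq 4+2l\geq 18$ when $2\mid z$ is weaker than the paper's $\geq 32$ (which exploits that $H_5$ is always odd, so $2\mid z$ forces $2\mid z_1$), but it suffices for non-minimality and for multiplicative reduction of the transformed model. As for the step you flag as incomplete: reducing $\alpha\in\{3,4\}$ (case $2\nmid z$) via Ogg's formula to the exclusion of type IV is a correct and clarifying reformulation, and leaving the Tate computation at $2$ unexecuted puts you at exactly the level of detail of the paper, which likewise only invokes ``a straightforward application of Tate's algorithm''; for the record, running it on the translated model (singular point at $(1,1)$ modulo $2$, so $a_6'=H_5/5-(x^2+y^2)$ and $b_8'$ decide the type) yields type II with $\alpha=4$ when $xy\equiv 2\pmod 4$ and type III with $\alpha=3$ when $4\mid xy$, so type IV never occurs and your anticipated conclusion — including the claim that both values of $\alpha$ arise — is correct.
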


\begin{proof} 
Recall that $x$, $y$ are coprime. The resultant of $x^5+y^5$ and $2x^4 + 3x^3y + 7x^2y^2 + 3xy^3 + 2y^4$
is $5^5$. Thus any prime $p \ne 2$, $5$ dividing $z$ cannot divide $c_4$ and divides $\Delta$,
and must therefore be a prime of multiplicative reduction. Using $5|z$, we see that $5 \mid \Delta$ and $5 \nmid c_4$. So $5$ is also a prime of multiplicative reduction. Thus the conductor $N$ is $2^\alpha 5 \Rad_{\{2,5\}}(z)$ for some $\alpha \in \Z_{\geq 0}$.
We also see that the model for $E_{x,y}$ is minimal at any prime $p \ne 2$.

If $2 \nmid z$, then $\ord_2(\Delta)=4$, $\ord_2(c_6)=5$, and $\ord_2(c_4) \geq 5$. So in this case the model for $E_{x,y}$ is minimal at $2$ and a straightforward application of Tate's algorithm \cite[Section IV.9]{SilvII} gives $\alpha \in \{3,4\}$. Finally, if $2 \mid z$, then 
 $\ord_2(\Delta) \geq 32$ and $\ord_2(c_4)=4$. A straightforward application of Tate's algorithm shows that the model for $E_{x,y}$ is not minimal at $2$ and we get a new model $E'$ that is integral at $2$ with $\ord_2(\Delta')=\ord_2(\Delta)-12 \geq 20$ and $\ord_2(c_4')=\ord_2(c_4)-4=0$. So in this case $E_{x,y}$ has multiplicative reduction at $2$ and $\Delta_{\mathrm{min}}=\Delta/2^{12}$.
\end{proof}

For a prime $l$ we write $\rho_l^{x,y}$ for the Galois representation on the $l$-torsion of $E_{x,y}$
\[\rho_l^{x,y} : \Gal(\overline{\Q}/\Q) \rightarrow \Aut(E_{x,y}[l]).\]

\begin{lem}\label{lem:Irreducibility55l}
For primes $l\geq 7$ the representation $\rho_l^{x,y}$ is irreducible.
\end{lem}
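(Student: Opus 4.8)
The plan is to show that the mod-$l$ Galois representation $\rho_l^{x,y}$ attached to the Frey curve $E_{x,y}$ is irreducible for all primes $l \geq 7$. The standard strategy for Frey curves is to argue by contradiction: if $\rho_l^{x,y}$ were reducible, then $E_{x,y}$ would admit a $\Q$-rational $l$-isogeny, and the theory of modular curves severely constrains when this can happen. First I would invoke the fundamental result on rational isogenies of elliptic curves over $\Q$: by Mazur's theorem (and its refinements for isogenies, e.g. the classification of rational points on $X_0(l)$), if $E/\Q$ has a rational $l$-isogeny with $l$ prime, then $l \in \{2,3,5,7,11,13,17,19,37,43,67,163\}$. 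Since we are interested in $l \geq 7$, this already rules out most small exponents, but the potentially problematic values remain.

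The key step is to exploit the arithmetic of $E_{x,y}$ at the primes of bad reduction, as recorded in Lemma~\ref{lem:DiscAndN55l}. The conductor is $N = 2^\alpha \cdot 5 \cdot \Rad_{\{2,5\}}(z)$ and the minimal discriminant has the shape $\Delta_{\mathrm{min}} = 2^4 5^{-5} (z_1^2 z_2)^{2l}$ up to the power-of-$2$ adjustment. The crucial observation is that at every odd prime $p \neq 5$ of multiplicative reduction, the curve $E_{x,y}$ has $\ord_p(\Delta_{\mathrm{min}})$ divisible by $l$ (because these primes divide $z$ and hence enter $\Delta$ to an $l$-th power). If $\rho_l^{x,y}$ were reducible, then $E_{x,y}[l]$ would have a Galois-stable line, and one analyzes the characters giving the action on this line and its quotient. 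At a prime $p$ of multiplicative reduction where $l \mid \ord_p(\Delta_{\mathrm{min}})$, the theory of the Tate curve shows the representation is unramified, which constrains the global characters.

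The main obstacle, as always with irreducibility of Frey curves, is ruling out the finitely many sporadic $l$ (here the relevant ones are $l \in \{7,11,13,17,19\}$, and possibly $37,43,67,163$) where a rational $l$-isogeny is a priori permitted by Mazur. For these I would combine the local ramification information with the behaviour at $2$ and $5$. Specifically, the isogeny character $\phi$ governing the stable line, after twisting, must be unramified outside $\{2,5\}$ and its $l$-th power lies in a tightly controlled set; comparing with the mod-$l$ cyclotomic character and using that $5$ is a prime of multiplicative reduction (so $\phi$ or $\phi^{-1}\chi_l$ is unramified at $5$) pins down $\phi$ to a character cut out by class-group or unit data in $\Q(\sqrt{-5})$ or the relevant cyclotomic field. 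One then checks, either by a direct argument on the possible values of $\phi$ or by a short computation as the authors do elsewhere in the paper, that no such character is compatible with the actual reduction types, yielding the desired contradiction for every $l \geq 7$.
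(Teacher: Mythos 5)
There is a genuine gap here. The structural fact you missed is that the Frey curve has a rational point of order $2$: its model is $Y^2=X\left(X^2-(x^2+y^2)X+H_5(x,y)/5\right)$, so $(0,0)\in E_{x,y}(\Q)$ gives a rational $2$-isogeny. This is exactly what the paper exploits: if $\rho_l^{x,y}$ were reducible, the $l$-isogeny kernel together with the $2$-isogeny kernel would give a rational cyclic subgroup of order $2l$, i.e.\ a noncuspidal $\Q$-rational point on $X_0(2l)$, and by Mazur et al.\ such points do not exist for primes $l\geq 11$ and exist for $l=7$ only when $j\in\{-3^3\cdot 5^3,\,3^3\cdot 5^3\cdot 17^3\}$, which is ruled out by the explicit formula for $j$. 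Working only with $X_0(l)$, as you propose, is strictly weaker: Mazur's isogeny theorem says nothing at all for $l=7$ and $l=13$, since $X_0(7)$ and $X_0(13)$ have genus $0$ with infinitely many rational points. In particular $l=7$ --- the exponent actually needed for Theorem~\ref{thm:55l} --- is left completely open by your first step.

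Your fallback for the problematic exponents, the isogeny-character analysis, is never carried out and is partly misdirected as stated: the isogeny character is a character of $\Gal(\overline{\Q}/\Q)$ which must a priori be allowed ramification at $l$ as well as at $2$ and $5$ (controlling the ramification at $l$ is the delicate point in all such arguments), and it is constrained by ray class groups of $\Q$ of small conductor; ``class-group or unit data in $\Q(\sqrt{-5})$'' plays no role --- the quadratic twisting field does not enter that way. If you wanted to avoid the $2$-isogeny observation entirely, a workable completion of your route would be: since $5\mid z$ forces $5\mid x+y$ and $5\,\|\,H_5$, one has $\ord_5(c_4)=0$ and $\ord_5(\Delta)\geq 3$, so $\ord_5(j)<0$; as every elliptic curve over $\Q$ with a rational $l$-isogeny for $l\in\{11,17,19,37,43,67,163\}$ has integral $j$-invariant, this kills all the sporadic large exponents, and for $l\in\{7,13\}$ one checks directly that the image of the explicit $j$-map $X_0(l)(\Q)\to X(1)(\Q)$ is incompatible with the $j$-invariant of $E_{x,y}$, exactly as the paper does for the $(7,7,l)$ Frey curve in Lemma~\ref{lem:Irreducibility77l}. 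As written, however, your proposal proves irreducibility for no exponent in $\{7,13\}$ and defers the remaining ones to an unexecuted computation, so it does not establish the lemma.
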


\begin{proof}
Since $E_{x,y}$ has a rational $2$-isogeny, a reducible $\rho_l^{x,y}$ (for an odd prime $l$) would give rise to a noncuspidal $\Q$-rational point on the modular curve $X_0(2l)$. By work of Mazur et al. (see e.g. \cite[Section 2.1.2]{Dahmen}) this is impossible for primes $l\geq 11$ and only possible for $l=7$ if $j \in \{-3^3 \cdot 5^3,3^3 \cdot 5^3 \cdot 17^3\}$. Using our explicit formula for the $j$-invariant of $E_{x,y}$ we easily check that that there are no $[x:y] \in \mathbb{P}^1(\Q)$ giving rise to one of these two values for $j$.
\end{proof}

Now applying modularity and level-lowering we deduce the following.

\begin{lem}\label{lem:LevelLowering55l}
For primes $l \geq 7$ the Galois representation $\rho_l^{x,y}$ arises from a newform $f$ of level $N=2^\alpha 5$ where $\alpha \in \{1,3,4\}$.
\end{lem}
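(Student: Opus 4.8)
The plan is to apply the standard modular-method machinery: modularity of $E_{x,y}$ followed by Ribet's level-lowering theorem, using the information we already have about the conductor and minimal discriminant from Lemma~\ref{lem:DiscAndN55l} and the irreducibility from Lemma~\ref{lem:Irreducibility55l}.

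First I would invoke the modularity theorem to conclude that $E_{x,y}$ is modular, so that $\rho_l^{x,y}$ arises from a newform of weight~$2$ and level equal to the conductor $N = 2^\alpha 5\, \Rad_{\{2,5\}}(z)$, with $\alpha \in \{1,3,4\}$. By Lemma~\ref{lem:Irreducibility55l} the representation $\rho_l^{x,y}$ is irreducible for $l \geq 7$, which is exactly the hypothesis needed to apply Ribet's level-lowering result. The key point to exploit is that, away from $2$ and $5$, the curve has multiplicative reduction at every prime $p \mid z$, and the crucial congruence is that $l$ divides the $\ord_p$ of the minimal discriminant at each such prime. Concretely, Lemma~\ref{lem:DiscAndN55l} gives $\Delta_{\mathrm{min}} = \Delta$ or $\Delta/2^{12}$, and in either case the $l$-adic valuation structure shows that for each prime $p \neq 2,5$ dividing $z$ we have $l \mid \ord_p(\Delta_{\mathrm{min}})$, since $\Delta$ is (up to the fixed powers of $2$ and $5$) a perfect $2l$-th power times known factors.

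Next I would use this divisibility to strip each such prime $p$ from the level: level-lowering removes from the conductor precisely those primes $p$ of multiplicative reduction at which $l \mid \ord_p(\Delta_{\mathrm{min}})$. After removing all primes dividing $\Rad_{\{2,5\}}(z)$, the level drops to $2^\alpha 5$ with $\alpha \in \{1,3,4\}$. Here I must be careful to check that the primes $2$ and $5$ are \emph{not} lowered away: at $5$ we established multiplicative reduction with $5 \nmid c_4$, but the valuation $\ord_5(\Delta_{\mathrm{min}})$ is not divisible by $l$ (indeed $\ord_5 \Delta = -5$ in the stated model, reflecting genuine multiplicative reduction that survives), so $5$ stays in the level; a parallel check at $2$ keeps the factor $2^\alpha$.

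\textbf{The main obstacle} will be verifying the $l$-divisibility of the discriminant valuations cleanly and confirming that $2$ and $5$ survive level-lowering. The power of $l$ dividing $\ord_p(\Delta_{\mathrm{min}})$ at the odd primes $p \mid z$ follows from $\Delta = 2^4 5^{-5}(z_1^2 z_2)^{2l}$: since $p \nmid 10$, one has $\ord_p(\Delta_{\mathrm{min}}) = 2l\,\ord_p(z_1^2 z_2)$, which is visibly a multiple of $l$. The delicate bookkeeping is ensuring the exceptional primes $2$ and $5$ behave as claimed and that Lemma~\ref{lem:Irreducibility55l} indeed supplies irreducibility across the full range $l \geq 7$, so that Ribet's theorem applies without the reducible-representation caveats, yielding a newform $f$ of the asserted level $N = 2^\alpha 5$.
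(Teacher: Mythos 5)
Your proposal is correct and takes essentially the same route as the paper's proof: modularity via \cite{BCDT01}, irreducibility from Lemma~\ref{lem:Irreducibility55l}, and Ribet level-lowering, which removes exactly the primes $p \,\|\, N$ with $l \mid \ord_p(\Delta_{\mathrm{min}})$, all of which is read off from Lemma~\ref{lem:DiscAndN55l}. The only slip is your parenthetical claim that $\ord_5(\Delta) = -5$: this is false as stated (it would contradict $\Delta \in \Z$); since $5 \mid z$ forces $5 \mid z_1$, one has $\ord_5(\Delta_{\mathrm{min}}) = 4l\,\ord_5(z_1) - 5$, and what matters is only that this is $\equiv -5 \pmod{l}$, hence not divisible by $l$ for $l \geq 7$, with the parallel check $\ord_2(\Delta_{\mathrm{min}}) \equiv -8 \pmod{l}$ when $2 \mid z$ (and $2^2 \mid N$ otherwise), so the factor $2^\alpha 5$ survives in the level exactly as you conclude.
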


\begin{proof}
By \cite{BCDT01} we have that $\rho_l^{x,y}$ is modular of level $N(E_{x,y})$. Since by Lemma~\ref{lem:Irreducibility55l}  $\rho_l^{x,y}$ is also irreducible, we obtain by level lowering \cite{Ribet90}, \cite{Ribet94}, that $\rho_l^{x,y}$ is modular of level $N(E_{x,y})/M$ where $M$ is the product of all primes $p||N(E_{x,y})$ with $l \mid \ord_p(\Delta_{\mathrm{min}}(E_{x,y}))$. The possible values for $N(E_{x,y})$ and $\Delta_{\mathrm{min}}(E_{x,y})$ can be read off from Lemma~\ref{lem:DiscAndN55l}, which finishes the proof.
\end{proof}

We used {\tt MAGMA} to compute the newforms at these levels;
{\tt MAGMA} uses Stein's algorithms for this \cite{Stein}. We found
respectively $0$, $1$ and $2$ newforms at these levels, which are all rational. The (strong Weil) elliptic curves $E_0$ corresponding 
to these newforms are $E_{40a1}$, $E_{80a1}$, and $E_{80b1}$, where the subscript denotes the Cremona reference.
We wrote a short {\tt MAGMA} script {\tt Modular55l.m} which contains these, as well as the remaining computations of this section.
Comparing traces of Frobenius as usual, gives the following. 

\begin{lem}\label{lem:Traces55l}
Suppose that $\rho_l^{x,y} \simeq \rho_l^{E_0}$ for some prime $l \geq 7$ and some $E_0$ as above.
Let $p\not=2,5$ be a prime.
\begin{itemize}
\item If $p \nmid z$, then $a_p(E_0) \equiv a_p(E_{x,y}) \pmod{l}$.
\item If $p|z$, then $a_p(E_0) \equiv \pm (1+p) \pmod{l}$.
\end{itemize}
\end{lem}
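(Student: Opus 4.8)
The plan is to exploit the isomorphism $\rho_l^{x,y} \simeq \rho_l^{E_0}$ at a rational prime $p \neq 2, 5$, comparing the traces of Frobenius at $p$ on both sides. The isomorphism forces the equality $a_p(\rho_l^{x,y}) \equiv a_p(\rho_l^{E_0}) \pmod{l}$, where the right-hand side is simply $a_p(E_0)$ since $E_0$ is one of the three rational elliptic curves identified above. The task is then to identify the left-hand side, which depends on the reduction type of $E_{x,y}$ at $p$. The key dichotomy is between primes $p$ of good reduction and primes $p$ of bad reduction for $E_{x,y}$, and Lemma~\ref{lem:DiscAndN55l} tells us precisely which primes are bad: the conductor away from $2,5$ is $\Rad_{\{2,5\}}(z)$, so the primes $p \neq 2,5$ of bad reduction are exactly those dividing $z$.

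First I would treat the case $p \nmid z$. Here $p$ is a prime of good reduction for $E_{x,y}$ (and $p \neq 2,5$ so it is not one of the special primes), so the trace of $\rho_l^{x,y}(\mathrm{Frob}_p)$ is literally $a_p(E_{x,y})$, the trace of Frobenius of the reduced curve. This gives $a_p(E_0) \equiv a_p(E_{x,y}) \pmod{l}$ immediately, which is the first bullet.

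Next I would handle the case $p \mid z$. From Lemma~\ref{lem:DiscAndN55l} and its proof, any prime $p \neq 2,5$ dividing $z$ is a prime of multiplicative reduction for $E_{x,y}$ (it divides $\Delta$ but not $c_4$). For a prime $p$ of multiplicative reduction, the mod-$l$ representation $\rho_l^{x,y}$ restricted to a decomposition group is (up to unramified twist) reducible with the standard shape, and the trace of Frobenius is $\pm(1+p) \pmod l$; concretely, when $l \mid \ord_p(\Delta_{\min})$ the representation is unramified at $p$ and $a_p(\rho_l^{x,y}) \equiv \pm(1+p)\pmod l$, the sign recording split versus nonsplit multiplicative reduction. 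Since $p \mid z$ means $l \mid \ord_p(\Delta_{\min})$ here (the valuation is a multiple of $l$ because $\Delta_{\min}$ is, up to the controlled factors at $2,5$, an $l$-th power times the relevant unit, as exhibited in the formula $\Delta = 2^4 \cdot 5^{-5}(z_1^2 z_2)^{2l}$), this applies and yields $a_p(E_0) \equiv \pm(1+p)\pmod l$, the second bullet.

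The main obstacle, and the only genuinely substantive point, is justifying the multiplicative-reduction trace formula $a_p(\rho_l^{x,y}) \equiv \pm(1+p) \pmod l$ in the second case. This rests on the standard description of the local behaviour of the mod-$l$ Galois representation of an elliptic curve at a prime of multiplicative reduction where $l$ divides the valuation of the minimal discriminant: the representation becomes unramified and Frobenius acts with eigenvalues whose product is $p$ and whose trace is $\pm(1+p)$, the ambiguity being the unramified quadratic character distinguishing split from nonsplit reduction. One must check that $l \mid \ord_p(\Delta_{\min})$ indeed holds for every prime $p \mid z$ with $p \neq 2,5$; this is exactly the condition recorded in the level-lowering step of Lemma~\ref{lem:LevelLowering55l}, where such primes are removed from the level, so it is already guaranteed by the hypothesis that $\rho_l^{x,y}$ arises from the level-$N$ newform $f$ corresponding to $E_0$. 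Everything else is a direct transcription of good-reduction and multiplicative-reduction behaviour of Frobenius traces under the assumed isomorphism of residual representations.
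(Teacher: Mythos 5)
Your argument is the standard Kraus--Oesterl\'e-style comparison, and for primes $p \neq l$ it is complete and correct: good reduction at primes $p \nmid 10z$ gives the first bullet, and the Tate-curve description at multiplicative primes, combined with $\ord_p(\Delta_{\mathrm{min}}) = 2l\,\ord_p(z_1^2 z_2) \equiv 0 \pmod{l}$, gives the second. (The paper itself proves nothing here; it simply cites \cite[Propositions 15.2.2 and 15.2.3]{Cohen} and \cite[Theorem 36]{Dahmen}, which encapsulate exactly this argument.)

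There is, however, a genuine gap: the lemma imposes only $p \neq 2,5$, so it includes the case $p = l$, and both of your steps fail there. Each of them rests on $\rho_l^{x,y}$ and $\rho_l^{E_0}$ being unramified at $p$, so that the trace of $\mathrm{Frob}_p$ is defined and equals $a_p \bmod l$; at $p = l$ this is false --- the mod-$l$ representation of an elliptic curve is never unramified at $l$ (its determinant is the mod-$l$ cyclotomic character), even for good reduction. Handling $p = l$ requires a different local argument: one compares restrictions to a decomposition group at $l$, using that good reduction makes $E_0[l]$ finite flat (reducible there iff the reduction is ordinary), while multiplicative reduction with $l \mid \ord_l(\Delta_{\mathrm{min}})$ makes $E_{x,y}[l]$ peu ramifi\'e; the ordinary/supersingular dichotomy then yields $a_l(E_0) \equiv a_l(E_{x,y}) \pmod{l}$ in the first case and $a_l(E_0) \equiv \pm 1 \equiv \pm(1+l) \pmod{l}$ in the second. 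This is precisely what the cited references supply, and it is why the companion Lemma~\ref{lem:Traces77l} imposes $p \neq l$ only when the newform is irrational: for rational $E_0$ the congruence survives at $p=l$, but not by a Frobenius-trace argument. The case $p = l$ is not vacuous for the paper either: in the proof of Proposition~\ref{prop:SecondCase55l}, congruence~\eqref{eqn:TraceCongruence55l} at $p = 43$ is invoked simultaneously for every prime $l \geq 7$, including $l = 43$, so your proof as written would not justify that elimination step.
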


\begin{proof}
See e.g. \cite[Propositions 15.2.2 and 15.2.3]{Cohen} or \cite[Theorem 36]{Dahmen}.
\end{proof}

We will now finish our intended proof.

\begin{proof}[Proof of Proposition~\ref{prop:SecondCase55l}]
By Lemma~\ref{lem:LevelLowering55l} and the determination of newforms of level $2^\alpha 5$ where $\alpha \in \{1,3,4\}$, we know that $\rho_l^{x,y}\simeq \rho_l^{E_0}$ for some prime $l \geq 7$ and $E_0$ one of $E_{40a1}$, $E_{80a1}$, $E_{80b1}$. We will eliminate these three possibilities for $E_0$, which then proves the proposition. Let $p\not=2,5$ denote a prime and define the sets
\[A_p:=\{p+1-\#{E}_{a,b}(\F_p) : a,b \in \F_p, \quad a^5+b^5 \not=0\}, \qquad T_p:=A_p \cup \{\pm (1+p)\}.\]
Obviously, if $p \nmid z$, then $a_p(E_{x,y}) \in A_p$. Hence by Lemma~\ref{lem:Traces55l} we have
\begin{equation}\label{eqn:TraceCongruence55l}
a_p(E_0) \equiv t \pmod{l} \mathrm{\ for\ some\ } t \in T_p.
\end{equation}
We compute
\[
T_3=\{\pm 2, \pm 4\}. 
\]
However, $E_{40a1}$ and $E_{80a1}$ have full $2$-torsion, and so $a_3(E_{40a1})=a_3(E_{80a1})=0$.
Thus for $l \geq 7$ prime and $E_0$ one of $E_{40a1}$ or $E_{80a1}$, we have that ~\eqref{eqn:TraceCongruence55l} with $p=3$ does not hold, and consequently $\rho_l^{x,y} \not\simeq E_0$. To deal with the remaining case $E_0=E_{80b1}$, we compute 
\[T_{43}=\{-44, -10, -8, -6, -2, 0, 2, 4, 6, 8, 12, 44\}, \quad a_{43}(E_{80b1})=10.\]
Now~\eqref{eqn:TraceCongruence55l} does not hold for any prime $l \geq 7$, \emph{except} $l=17$. So from now on let $l=17$, we deal with this case using the method of Kraus. For a prime $p\equiv 1 \pmod{l}$, let $\left(\F_p^*\right)^l$ denote the nonzero $l$-th powers in $\F_p$ and define the sets
\begin{align*}
A_{p,l}' & := \left\{p+1-\#{E}_{a,b}(\F_p) : a,b \in \F_p,\ 5(a+b) \in \left(\F_p^*\right)^l,\ H_5(a,b)/5 \in \left(\F_p^*\right)^l\right\},\\
T_{p,l}' & := A_{p,l}' \cup \{\pm 2\}.
\end{align*}
Now take $p=6\cdot 17+1=103$. Since we are assuming $\rho_{17}^{x,y} \simeq \rho_{17}^{E_0}$ (with $E_0=E_{80b1}$),
 Lemma~\ref{lem:Traces55l} gives us that
\begin{equation}\label{eqn:ExplicitKraus}
a_{103}(E_{80b1}) \equiv t \pmod{17} \quad \text{for some $t \in T_{103,17}'$}.
\end{equation}
We compute
\[T_{103,17}'=\{-6,\pm 2\}, \quad a_{103}(E_{80b1})=-14\]
and conclude that~\eqref{eqn:ExplicitKraus} does not hold, which completes the proof.
\end{proof}

\subsection{Necessity of the modular approach}\label{sec:Discussion}
Proving the nonexistence of non-trivial primitive integer solutions to~\eqref{eqn:55l} with $5|z$ for some odd prime $l$ can be reduced to finding $\Q$-rational points on the hyperelliptic curve
\[D_l: Y^2=4X^l+5^{2l-5}.\]
Note that this curve has genus $(l-1)/2$ and that
\[D_l(\Q) \supset \{\infty\}.\]

\begin{lem}\label{lem:ReductionToCurves55lCaseII}
Let $l$ be an odd prime. If
\begin{equation}\label{eqn:RatPoints55lCaseII}
D_l(\Q)=\{\infty\}.
\end{equation}
then there are no non-trivial primitive integer solutions to~\eqref{eqn:55l} with $5 | z$.
\end{lem}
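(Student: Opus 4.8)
The plan is to mimic the proof of Lemma~\ref{lem:ReductionToCurves55lCaseI}, but to exploit that the present hypothesis is even stronger: it asserts that $D_l$ has \emph{no} finite rational points at all. So it suffices to manufacture, from any offending solution, a single finite rational point on $D_l$. Suppose for contradiction that $x$, $y$, $z$ are non-zero coprime integers satisfying~\eqref{eqn:55l} with $5 \mid z$. As recorded at the start of Section~\ref{sec:Modular55l}, Lemma~\ref{lem:gcd} gives $\gcd(x+y,H_5)=5$ and $5^2 \nmid H_5$, so that~\eqref{eqn:55lfactors} factors as
\[
5(x+y)=z_1^l,\qquad H_5=5z_2^l,
\]
with $z_1$, $z_2$ coprime non-zero integers and $z=z_1z_2$. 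In particular $z_1 \ne 0$.

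Next I would feed this factorization into the identity~\eqref{eqn:identity}. Writing $x+y=z_1^l/5$ and $H_5=5z_2^l$, identity~\eqref{eqn:identity} becomes
\[
5(x^2+y^2)^2=20z_2^l+\frac{z_1^{4l}}{5^4},
\]
and dividing by $5$ gives
\[
(x^2+y^2)^2=4z_2^l+\frac{z_1^{4l}}{5^5}.
\]
Multiplying through by $5^{2l}/z_1^{4l}$ and collecting the $l$-th powers, this rearranges to
\[
\left(\frac{5^l(x^2+y^2)}{z_1^{2l}}\right)^2=4\left(\frac{25z_2}{z_1^4}\right)^l+5^{2l-5}.
\]
Since $x$, $y$, $z_1$, $z_2$ are integers and $z_1\ne 0$, the two coordinates below are well-defined rationals, and the display above shows that
\[
P:=\left(\frac{25z_2}{z_1^4},\ \frac{5^l(x^2+y^2)}{z_1^{2l}}\right)\in D_l(\Q).
\]

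Finally, as $z_1\ne 0$ the $X$-coordinate of $P$ is finite, so $P\ne\infty$. This contradicts the hypothesis~\eqref{eqn:RatPoints55lCaseII} that $D_l(\Q)=\{\infty\}$, which proves the lemma. In contrast to the $5\nmid z$ case, no further analysis of the coordinates of $P$ against a known list of finite points is needed here, precisely because~\eqref{eqn:RatPoints55lCaseII} leaves no finite points to compare against. The only step I expect to require genuine care is the $5$-adic bookkeeping in the substitution: one must scale by exactly $5^{2l}/z_1^{4l}$ so that $P$ lands on $D_l$ with the constant term $5^{2l-5}$, rather than on a quadratic twist or on a curve carrying a different power of $5$. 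This is the main, if modest, obstacle; everything else is a direct transcription of the argument already used for $C_l$.
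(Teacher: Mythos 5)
Your proof is correct and follows essentially the same route as the paper: the identical appeal to Lemma~\ref{lem:gcd} for the factorization $5(x+y)=z_1^l$, $H_5=5z_2^l$, the same substitution into identity~\eqref{eqn:identity}, and the same rescaling (your dividing by $5$ and then multiplying by $5^{2l}/z_1^{4l}$ is the paper's single multiplication by $5^{2l-1}/z_1^{4l}$), producing exactly the paper's point $P=\bigl(5^2 z_2/z_1^4,\; 5^l(x^2+y^2)/z_1^{2l}\bigr)\in D_l(\Q)$ with $P\neq\infty$ since $z_1\neq 0$. The $5$-adic bookkeeping you flagged as the main point of care is indeed handled correctly, so nothing is missing.
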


\begin{proof}
In this case Lemma~\ref{lem:gcd} gives us $\gcd(x+y,H_5)=5$ and $5^2 \nmid H_5$. Hence \eqref{eqn:55lfactors} yields
\[5(x+y)= z_1^l, \qquad H_5=5 z_2^l\]
where $z_1$, $z_2$ are coprime non-zero integers satisfying $z=z_1 z_2$. Using identity \eqref{eqn:identity}
we see that 
\[5(x^2+y^2)^2=20z_2^l+5^{-4} z_1^{4l}.\]
Multiplying both sides by $5^{2l-1}/z_1^{4l}$ gives
\[\left(\frac{5^l (x^2+y^2)}{ z_1^{2l}}\right)^2=4 \left(\frac{5^2 z_2}{z_1^4}\right)^l+5^{2l-5}.\]
Thus
\[P=\left(\frac{5^2 z_2}{z_1^4}, \frac{5^l (x^2+y^2)}{z_1^{2l}}  \right) \in D_l(\Q).\]
Since $z_1 \neq 0$, we have $P \neq \infty$. This proves the lemma.
\end{proof}

Upper bounds for $\rank \Jac(D_l)(\Q)$ are given by the $2$-Selmer ranks of $\Jac(D_l)/\Q$; see Table~\ref{table:RankBoundsD}. For $l=7$ and $l=13$ (and, assuming GRH, also for $l=17$) we conclude that $\rank \Jac(D_l)(\Q)=0$, so it is easy to determine $D_l(\Q)$ for these values of $l$. Since our focus is on $l=7,19$, we give the details for $l=7$.

\begin{rem}
Instead of using a $2$-descent on $\Jac(D_l)/\Q$, we can also apply
\cite{StollI}, \cite{StollII} to get an upper bound for $\rank \Jac(D_l)(\Q)$
using a $(1-\zeta_l)$-descent on $\Jac (D_l))/\Q(\zeta_l)$. It turns out that for
$l=11$ this gives the same upper bound for $\rank \Jac(D_l)(\Q)$ as given by
Table~\ref{table:RankBoundsD} (namely $3$). For $l=7,13,17,19$ however, the
upper bounds obtained from a $(1-\zeta_l)$-descent will be strictly larger than
the bounds given by Table~\ref{table:RankBoundsD} (but one does not need to
assume GRH).  \end{rem}

\begin{table}[h!]
\begin{minipage}{\linewidth}
\caption{Rank bounds for the Jacobian of $D_l$}
\begin{tabular}{c|c|c}
$l$ & $\dim_{\F_2}\Sel^{(2)}(\Q,\Jac (D_l))~\footnote{The $*$ indicates that the result is conditional on GRH}$ & Time \\
\hline
7 & 0 & 1.4s \\
11 & 3 & 2093s \\
13 & 0 & 264613s $\approx$ 3.1 days \\
17 & $0^*$ & 240s \\
19 & $1^*$ & 723s \\
\end{tabular}
\label{table:RankBoundsD}
\end{minipage}
\end{table}

\begin{lem}\label{lem:RatPoints55lCaseII}
The only $\Q$-rational point on $D_7$ is the single point at infinity. 
\end{lem}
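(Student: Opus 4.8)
The plan is to exploit the fact that, according to Table~\ref{table:RankBoundsD}, the $2$-Selmer group $\Sel^{(2)}(\Q,\Jac(D_7))$ is trivial. Feeding this into the exact sequence
\[
0 \to \Jac(D_7)(\Q)/2\Jac(D_7)(\Q) \to \Sel^{(2)}(\Q,\Jac(D_7)) \to \Sha(\Q,\Jac(D_7))[2] \to 0
\]
already used in~\eqref{eqn:SelmerBound}, the vanishing of the middle term forces $\Jac(D_7)(\Q)/2\Jac(D_7)(\Q)=0$. Writing $\Jac(D_7)(\Q) \cong \Z^r \oplus T$ with $T$ finite, this quotient is $(\Z/2)^r \oplus T/2T$, so we conclude simultaneously that $r=\rank \Jac(D_7)(\Q)=0$ and that $T$ has no $2$-torsion. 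Thus the Mordell--Weil group is a finite abelian group of odd order, and, crucially, the full Chabauty--Coleman machinery is not needed: I only have to pin down this finite group and then transport the information back to the curve.

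First I would eliminate the remaining (odd) torsion. Exactly as in the treatment of $J_l$ in Section~\ref{sec:Chabauty55l}, the curve $D_7: Y^2 = 4X^7 + 5^9$, and hence $\Jac(D_7)$, has good reduction away from $2$ and $5$, and for any odd prime $p$ of good reduction the reduction map $\Jac(D_7)(\Q)_{\mathrm{tors}} \to \Jac(D_7)(\F_p)$ is injective. Using {\tt MAGMA} I would search for two primes $p_1 \neq p_2$, both distinct from $2$ and $5$, with $\gcd(\#\Jac(D_7)(\F_{p_1}), \#\Jac(D_7)(\F_{p_2}))=1$; the existence of such a pair forces $\Jac(D_7)(\Q)_{\mathrm{tors}}=\{0\}$. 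Combined with $r=0$, this yields $\Jac(D_7)(\Q)=\{0\}$.

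To finish, I would use that $D_7$ has odd degree, so it carries a single rational point $\infty$ at infinity, which I take as the base point $P_0$ for the Abel--Jacobi embedding $\jmath: D_7 \to \Jac(D_7)$, $P \mapsto [P-\infty]$. This map is injective (the genus is $3 \geq 1$) and sends $\infty$ to $0$. Therefore $\jmath$ restricts to an injection $D_7(\Q) \hookrightarrow \Jac(D_7)(\Q) = \{0\}$, and the only point mapping to $0$ is $\infty$. Hence $D_7(\Q)=\{\infty\}$, as claimed.

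The genuinely load-bearing step is the $2$-Selmer computation recorded in Table~\ref{table:RankBoundsD}: this is where the arithmetic difficulty resides, but it has already been carried out and is cheap for $l=7$. Given that input, the only real subtlety is that a trivial $2$-Selmer group by itself kills the rank and the $2$-power torsion yet leaves open the possibility of odd torsion; the two-prime reduction argument of the second paragraph is precisely what removes this gap, after which the injectivity of Abel--Jacobi makes the conclusion immediate.
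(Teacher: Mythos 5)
Your proposal is correct and follows essentially the same route as the paper: the vanishing $2$-Selmer group from Table~\ref{table:RankBoundsD} forces $\rank \Jac(D_7)(\Q)=0$, reduction modulo two good primes with coprime group orders kills the torsion (the paper takes $p=3$ and $p=43$, with $\#\Jac(D_7)(\F_3)=28$ and $\#\Jac(D_7)(\F_{43})=39929$), and injectivity of the Abel--Jacobi map based at $\infty$ finishes the argument. One small correction: $D_7$ has bad reduction at $7$ as well as at $2$ and $5$ (modulo $7$ the polynomial $4X^7+5^9$ becomes the seventh power of a linear form), so the primes in your torsion step must also avoid $7$; this does not damage the proof, since any valid pair of good primes, such as $(3,43)$, does the job.
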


\begin{proof}
Let $J$ denote the Jacobian of $D_7$.
We shall show that $J(\Q)=\{0\}$. Since the Abel-Jacobi map
\[D_7 \rightarrow J, \qquad P \mapsto [P-\infty]\]
is injective, it will follow that $D_7(\Q)=\{\infty\}$.

First we determine the torsion subgroup $J(\Q)_\mathrm{tors}$ of $J(\Q)$. The curve $D_7$,
and hence its Jacobian $J$, has good reduction away from $2$, $5$, $7$.
For any (necessarily odd) prime $p$ of good reduction, the natural map
\[J(\Q)_\mathrm{tors} \rightarrow J(\F_p)\]
is injective. Using {\tt MAGMA} we find that
\[\# J(\F_3)=28, \qquad \# J(\F_{43})=39929.\]
Since $\gcd(28,39929)=1$, we deduce that $J(\Q)_\mathrm{tors}=\{0\}$.

We have already seen that $\rank J(\Q)=0$. It follows that
$J(\Q)=\{0\}$, which completes the proof.
\end{proof}

Let $r:=\rank \Jac(D_{19})(\Q)$. We see from Table~\ref{table:RankBoundsD} that $r \leq 1$
under the assumption of GRH. 
Assuming the finiteness of $\Sha(\Q,\Jac(D_{19}))$ in addition to GRH 
leads us to $r=1$. So in order to
use the method of Chabauty-Coleman to determine $D_{19}(\Q)$, we must first of
all prove that $r=1$ (if true \ldots) and next find a $\Q$-rational point of
infinite order on $\Jac(D_{19})$. Both tasks seem quite challenging at the moment.

We conclude that the modular method is not necessary to prove Theorem~\ref{thm:55l} for the case $l=7$, but that for $l=19$ we really do need it at this point.

\section{Proof of Theorem~\ref{thm:775}}\label{sec:77l}

In this section we shall be concerned with the primitive integer solutions to~\eqref{eqn:77l} for primes $l\not=2,3,7$. Although ultimately we will only to be able to (unconditionally) determine all the solutions if $l=5$, we will take a more general approach.
The reason for doing this is threefold. First of all, it is simply not much more work to consider more values of $l$.
Second, while we do not fully determine  (unconditionally) all primitive integer solutions to~\eqref{eqn:77l}  for any prime $l \geq 11$, we do obtain many other partial results for $l \geq 11$, which may be interesting in their own right. Finally, in Section~\ref{sec:GRH} we solve~\eqref{eqn:77l} for $l=11$ assuming GRH, for which we lay the foundations here.

\subsection{Initial factorizations for $x^7+y^7=z^l$}\label{sec:InitialFactorization77l}

Let $(x,y,z)$ be a primitive integer solution to \eqref{eqn:77l} for some prime $l\not=2,3,7$ and suppose that
$z \neq 0$. Recall that
\[
H_7(x,y)=\frac{x^7+y^7}{x+y}=
x^6-x^5 y+x^4 y^2 - x^3 y^3+x^2 y^4-x y^5+y^6.
\]
By Lemma~\ref{lem:gcd}, $\gcd(x+y,H_7(x,y))=1$ or $7$ and $7^2 \nmid H_7(x,y)$.
Thus we can again subdivide into two cases:
\begin{itemize}
\item If $7 \nmid z$ then
\begin{equation}\label{eqn:Factor77lCaseI}
x+y=z_1^l, \qquad H_7(x,y)= z_2^l, \qquad z=z_1 z_2
\end{equation}
where $z_1$, $z_2$ are non-zero, coprime integers. 
\item If 
$7 | z$ then
\begin{equation}\label{eqn:Factor77lCaseII}
7(x+y)=z_1^l, \qquad H_7(x,y)=7 z_2^l, \qquad z=z_1 z_2
\end{equation}
where $z_1$, $z_2$ are non-zero, coprime integers. 
\end{itemize}
These factorizations do not seem to be sufficient
to enable us to solve the problem. Henceforth, $\zeta$ will denote
a primitive $7$-th root of unity, $L=\Q(\zeta)$ and $\OO=\Z[\zeta]$
the ring of integers of $L$.
The class number of $\OO$
is $1$ and the unit rank is $2$. The unit group is in fact
\[\{ \pm \zeta^i (1+\zeta)^r (1+\zeta^2)^s : 0 \leq i \leq 6, \quad r,s \in \Z\}.\]
Moreover, $7$ ramifies as $7\OO=(1-\zeta)^6 \OO$.
Now $H_7(x,y)=\Norm(x+\zeta y)$. From~\eqref{eqn:Factor77lCaseI} and~\eqref{eqn:Factor77lCaseII}
we have
\begin{itemize}
\item If $7 \nmid z$ then
\begin{equation}\label{eqn:zetI}
x+\zeta y = (1+\zeta)^r (1+\zeta^2)^s \beta^l, \qquad 0 \leq r,s \leq l-1, 
\end{equation}
for some $\beta \in \Z[\zeta]$.
\item If $7|z$ then
\begin{equation}\label{eqn:zetII}
x+\zeta y = (1-\zeta) (1+\zeta)^r (1+\zeta^2)^s \beta^l, \qquad 0 \leq r,s \leq l-1, 
\end{equation}
for some $\beta \in \Z[\zeta]$.
\end{itemize}
Thus we have $2 l^2 \geq 50$ cases to consider. In the next section we will use the 
modular approach to reduce the number of cases to just $2$ for many values of $l$, e.g. $l=5,11$.

\subsection{A modular approach to $x^7+y^7=z^l$}\label{sec:Modular77l}

Consider the subset of primes
\[L_7:=\{\text{primes } l : l\not=2,3,7 \text{ and } l<100\}.\]
The purpose of this section is to prove the following proposition.

\begin{prop}\label{prop:Modular77l}
Let $(x,y,z)$ be a primitive integer solution to \eqref{eqn:77l} with
$z \neq 0$ and $l \in L_7$. 
If $7 \nmid z$ then \eqref{eqn:zetI} holds with $r=s=0$.
If $7 | z$ then \eqref{eqn:zetII} holds with $r=s=0$.
\end{prop}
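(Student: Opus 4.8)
The plan is to associate a Frey curve to the solution $(x,y,z)$ of~\eqref{eqn:77l} and run the modular machinery, exactly as was done in Section~\ref{sec:Modular55l} for the signature $(5,5,l)$, but now exploiting the extra structure coming from the factorization over $L=\Q(\zeta)$. First I would write down Kraus's Frey curve for $x^7+y^7=z^l$ (cf.\ \cite{Kraus99}) and compute its invariants $c_4$, $c_6$, $\Delta$, and $j$ in terms of $x,y$. As in Lemma~\ref{lem:DiscAndN55l}, the minimal discriminant will be $7$-adically controlled by the factorization~\eqref{eqn:Factor77lCaseI} or~\eqref{eqn:Factor77lCaseII}, so that the conductor has the shape $2^\alpha 7^\beta \Rad_{\{2,7\}}(z)$ for a small set of exponents $\alpha,\beta$; a Tate's-algorithm computation pins these down. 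Irreducibility of $\rho_l^{x,y}$ for $l \in L_7$ should follow from the rational $2$-isogeny (the curve is a $(l,l,l)$-type Frey curve with full rational $2$-torsion) together with the Mazur-type results on $X_0(2l)$, checking the finitely many exceptional $j$-invariants do not arise, just as in Lemma~\ref{lem:Irreducibility55l}.

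Next I would invoke modularity \cite{BCDT01} and level-lowering \cite{Ribet90}, \cite{Ribet94} to conclude that $\rho_l^{x,y}$ arises from a newform $f$ of some small predicted level $N_0 \mid 2^\alpha 7^\beta$, independent of $l$. I would then compute the (finitely many) newforms at these levels in {\tt MAGMA}. The eliminating step is the crux: for each surviving newform $f$ and each of the $2l^2$ candidate pairs $(r,s)$ in~\eqref{eqn:zetI}--\eqref{eqn:zetII}, I would compare traces of Frobenius at several auxiliary primes $p \not= 2,7$. The key refinement over the $(5,5,l)$ argument is that the constraints~\eqref{eqn:zetI}, \eqref{eqn:zetII} on $x+\zeta y$ restrict $(x,y)$ modulo $p$ to those for which $(1+\zeta)^{-r}(1+\zeta^2)^{-s}(x+\zeta y)$ is (up to the $(1-\zeta)$ factor) an $l$-th power in the residue ring; this cuts down the admissible residue classes $(\bar x,\bar y) \bmod p$, hence the admissible traces $a_p(E_{x,y})$, in a way that \emph{depends on the pair $(r,s)$}. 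For primes $p \equiv 1 \pmod l$ one gets genuine $l$-th-power conditions in $\F_p$ (the Kraus-style sets $A_{p,l}'$ of Section~\ref{sec:Modular55l}), which are the sharpest; for other $p$ one uses the coarser sets. The assertion to prove is precisely that, after this sieving, the only pair $(r,s)$ compatible with $f$ for every prime $l \in L_7$ is $(r,s)=(0,0)$.

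Concretely, for each $l \in L_7$ and each candidate $(r,s) \not= (0,0)$ I would search for a single auxiliary prime $p$ (ideally with $p \equiv 1 \pmod l$, so the $l$-th-power condition is nontrivial) such that the set of residual traces allowed by the pair $(r,s)$ is disjoint, modulo $l$, from $a_p(f)$ reduced mod $l$. Finding one such $p$ per bad pair suffices to kill that pair. This is a finite, if sizeable, computation over the $\#L_7 \cdot (2l^2-1)$ triples $(l,r,s)$, organized so that the $(1+\zeta)$, $(1+\zeta^2)$ unit twists and the $(1-\zeta)$ factor are tracked carefully in the residue-class enumeration; I would implement it as a {\tt MAGMA} script analogous to {\tt Modular55l.m}. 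The main obstacle will be this elimination of the unit exponents $(r,s)$: for the larger primes $l$ the number $2l^2$ of cases grows, and one must be confident that a suitable splitting auxiliary prime $p \equiv 1 \pmod l$ with separating trace always exists. Establishing that the sieve succeeds uniformly for all $l \in L_7$ down to the two surviving cases $(r,s)=(0,0)$—and handling any stubborn newform with large residual image by a Kraus-type local argument at a prime $p \equiv 1 \pmod l$, as in the $l=17$ case of Proposition~\ref{prop:SecondCase55l}—is where the real work lies.
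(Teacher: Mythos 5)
Your overall architecture is the paper's: Kraus's Frey curve, modularity and level-lowering to the levels $2^\alpha 7^2$, computation of the newforms at levels $196$ and $392$, and then a sieve over the unit exponents $(r,s)$ using $l$-th power conditions in the residue fields of $\Z[\zeta]$ (your sieve is essentially the paper's Lemma~\ref{lem:ExpSieve77l}). But there is a genuine gap: no sieve of the kind you describe can eliminate the newform corresponding to $E_{392c1}$ in the case $7 \nmid z$, and your proposal has no other mechanism for handling it. The curve $E_{392c1}$ is the specialization $E_{1,-1}$ of the Frey curve at the trivial solution, and since the Frey coefficients $a_2,a_4,a_6$ are homogeneous of degrees $2,4,6$, one has $E_{\lambda,-\lambda} \cong E_{1,-1}$ over $\F_p$ for every $\lambda \in \F_p^*$; hence every pair $(a,b)=(\lambda,-\lambda)$ passes your trace conditions at every auxiliary prime $p \neq 2,7$. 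These pairs also pass the $l$-th power conditions for a suitable $\lambda$ and a \emph{fixed} exponent pair: from $\prod_{i=1}^{6}(1-\zeta^i)=7$ one gets $(1-\zeta)^6 = 7u^{-1}$ with $u$ a unit of $\Z[\zeta]$, and since $\gcd(6,l)=1$ and $\pm\zeta \in (L^*)^l$, it follows that $1-\zeta \equiv 7^{m}(1+\zeta)^{A}(1+\zeta^2)^{B} \pmod{(L^*)^l}$ with $6m \equiv 1 \pmod{l}$. Taking $\lambda \equiv 7^{-m} \pmod{p}$, the element $\lambda(1-\zeta)$ is an $l$-th power times $(1+\zeta)^A(1+\zeta^2)^B$ in every residue field above $p$, while $a+b=0=0^l$ and $H_7(\lambda,-\lambda)=7^{1-6m}$ is a rational $l$-th power. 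So the pair $(A \bmod l, B \bmod l)$ survives your sieve at \emph{every} prime $p$; and since $(1-\zeta)^6/7=u^{-1}$ is not a root of unity, $(A,B)\not\equiv(0,0) \pmod l$ for all but finitely many $l$. This is exactly why the paper does not sieve this combination away: the remark after Lemma~\ref{lem:ResultAfterImagesOfInertia77l} states that $E_{392c1}$ with $7\nmid z$ is not susceptible to Kraus's method. The missing idea is the image-of-inertia comparison at $7$ (Lemma~\ref{lem:ImagesOfInertia}): when $7 \nmid z$ the Frey curve has $\ord_7(\Delta_{\mathrm{min}})=2$, whereas $\ord_7(\Delta_{\mathrm{min}}(E_{392c1}))=4$, and since $\gcd(12,2)\neq\gcd(12,4)$ the two mod-$l$ representations differ already on inertia at $7$. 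This excludes $E_{392c1}$ when $7\nmid z$ (and, symmetrically, $E_{196a1}$ when $7\mid z$) \emph{before} any sieving; only then does the sieve over $(r,s)$ succeed.

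A second, more repairable, flaw: your irreducibility step rests on a false premise. Kraus's $(7,7,l)$ Frey curve has no rational $2$-torsion and no rational $2$-isogeny -- its $2$-division field is the real cubic field $\Q(\zeta_7+\zeta_7^{-1})$ (for instance, at $(x,y)=(1,0)$ the $2$-division polynomial is $X^3-X^2-2X+1$, irreducible over $\Q$); it is the $(5,5,l)$ Frey curve that has the rational point $(0,0)$ of order $2$. So the $X_0(2l)$ route of Lemma~\ref{lem:Irreducibility55l} is unavailable here. One must instead use Mazur's results on $X_0(l)$, which cover $l=11$ and $l\geq 17$, together with a separate argument for $l\in\{5,13\}$ (both lie in $L_7$), where $X_0(l)$ has genus $0$ and hence infinitely many rational points; the paper (Lemma~\ref{lem:Irreducibility77l}) does this by noting that $\ord_2(j)=8$ for the Frey curve and checking this is incompatible with the explicit $j$-maps $X_0(5),X_0(13)\to X(1)$.
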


Let $(x,y,z)$ be a primitive integer solution to~\eqref{eqn:77l} with $z\not=0$ for some prime $l \geq 5$, $l\not=7$.
Kraus \cite[pp. 329--330]{Kraus99} constructed a Frey curve for
the equation $x^7+y^7=z^l$. Following Kraus,
we associate to our solution $(x,y,z)$ to \eqref{eqn:77l}
the Frey elliptic curve
\[
E_{x,y} \; : \; Y^2= X^3+a_2 X^2 + a_4 X+ a_6,
\]
where
\begin{align*}
a_2 &=-(x-y)^2, \qquad
a_4=-2 x^4+ x^3 y -5 x^2 y^2 + x y^3-2y^4\\
a_6 &= x^6-6 x^5 y + 8 x^4 y^2 -13 x^3 y^3 + 8 x^2 y^4 -6 x y^5+y^6.
\end{align*}
We record some of the invariants of $E_{x,y}$:
\begin{align}
c_4 &=2^4 \cdot 7 (x^4 - x^3 y + 3 x^2 y^2 - x y^3 + y^4), \\
c_6 &=-2^5 \cdot 7 (x^6 - 15 x^5 y + 15 x^4 y^2 - 29 x^3 x^3 + 15 x^2 y^4 - 15 x y^5 + y^6),\\
\label{eqn:jinv} 
\Delta &=2^4 \cdot 7^2 H_7(x,y)^2, \qquad
j = \frac{2^8 \cdot 7 (x^4 - x^3 y + 3 x^2 y^2 - x y^3 + y^4)^3}{H_7(x,y)^2}.
\end{align}

\begin{lem}\label{lem:DiscAndN77l}
The conductor $N$ and minimal discriminant $\Delta_{\mathrm{min}}$ of $E_{x,y}$ satisfy
\begin{itemize}
\item $N=2^\alpha 7^2 \Rad(z_2)$ where $\alpha=2$ or $3$ 
and $\Rad(z_2)$
is the product of the distinct primes dividing $z_2$ (and $2,7 \nmid z_2$);
\item  $\Delta_{\mathrm{min}}=\Delta$.
\end{itemize}
\end{lem}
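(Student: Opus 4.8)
The plan is to follow the blueprint of the proof of Lemma~\ref{lem:DiscAndN55l}, splitting the primes into three classes: the primes $p\neq 2,7$, the prime $7$, and the prime $2$. The essential preliminaries are purely about parities together with the factorizations~\eqref{eqn:Factor77lCaseI}--\eqref{eqn:Factor77lCaseII}. Write $Q(x,y)=x^4-x^3y+3x^2y^2-xy^3+y^4$ for the quartic appearing in $c_4$ and $R(x,y)$ for the sextic appearing in $c_6$. Since $x,y$ are coprime (hence not both even), a direct reduction modulo $2$ in the three cases (both odd; $x$ even, $y$ odd; $x$ odd, $y$ even) shows that $Q$, $R$ and $H_7$ are all odd. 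In particular $2\nmid z_2$ in both~\eqref{eqn:Factor77lCaseI} and~\eqref{eqn:Factor77lCaseII}, and Lemma~\ref{lem:gcd} gives $7^2\nmid H_7$, so $7\nmid z_2$ in either case; thus $\Rad(z_2)$ is genuinely a product of primes different from $2$ and $7$.

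For a prime $p\neq 2,7$, the key remark is that $\Delta=2^4\cdot 7^2 H_7^2$ involves only $H_7$, so the primes $p\neq 7$ dividing $\Delta$ are exactly those dividing $H_7$, i.e.\ (since $\gcd(x+y,H_7)\mid 7$ by Lemma~\ref{lem:gcd}) exactly those dividing $z_2$; the primes dividing $z_1$ but not $z_2$ are primes of good reduction. I would then compute with {\tt MAGMA} the resultant $\Res(H_7,Q)$ and check that its only prime divisors are $2$ and $7$. Any $p\neq 2,7$ dividing both $H_7(x,y)$ and $Q(x,y)$ with $x,y$ coprime would divide this resultant, so for such $p$ we get $p\nmid Q$, hence $p\nmid c_4$. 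Therefore each such $p$ is a prime of multiplicative reduction contributing $p^1$ to $N$, and the model is minimal at $p$ because $\ord_p(c_4)=0<4$. This produces the factor $\Rad(z_2)$ and shows $\Delta_{\mathrm{min}}=\Delta$ away from $2$ and $7$.

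At $p=7$ the explicit factor $7\mid c_4$ together with $\ord_7(\Delta)\geq 2$ forces additive reduction, and since $7\geq 5$ the conductor exponent is exactly $2$, giving the factor $7^2$. The model is minimal at $7$ since $\ord_7(\Delta)$ equals $2$ (when $7\nmid z$) or $4$ (when $7\mid z$, using $7\|H_7$), in either case $<12$. At $p=2$, the oddness of $Q$, $R$, $H_7$ pins down the valuations $\ord_2(c_4)=4$, $\ord_2(c_6)=5$, $\ord_2(\Delta)=4$; as $\ord_2(\Delta)=4<12$ the model is minimal at $2$, and as $\ord_2(c_4),\ord_2(\Delta)>0$ the reduction is additive. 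Combining minimality at $2$, $7$ and all other primes yields $\Delta_{\mathrm{min}}=\Delta$.

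The remaining, and genuinely the hardest, point is to confine the conductor exponent $\alpha$ at $2$ to the two values $\{2,3\}$. The crude valuations do not suffice here, and one must run Tate's algorithm at $2$, tracking $x$ and $y$ modulo a suitable power of $2$; I expect the exponent to depend on finer congruences (e.g.\ on the parity of $z$, equivalently of one of $x,y$), exactly as the split into $\alpha\in\{3,4\}$ arose at the corresponding step of Lemma~\ref{lem:DiscAndN55l}. This case analysis is the main obstacle, and the one place where I would rely on a machine computation to confirm that no value of $\alpha$ outside $\{2,3\}$ occurs.
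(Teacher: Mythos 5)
Your proposal is correct and follows essentially the same route as the paper: the resultant of $H_7$ and the quartic in $c_4$ (which the paper records as exactly $7^2$) handles the primes $p\neq 2,7$, additive reduction with conductor exponent $2$ and minimality via $\ord_7(\Delta)\in\{2,4\}$ handles $p=7$, the valuations $\ord_2(c_4)=4$, $\ord_2(c_6)=5$, $\ord_2(\Delta)=4$ give minimality at $2$, and Tate's algorithm pins down $\alpha$. The final step you defer is carried out in the paper by exactly the case analysis you anticipate: using the symmetry $E_{x,y}=E_{y,x}$ one may assume $x$ or $z$ is even, and Tate's algorithm yields $\alpha=3$ if $2\mid z$ or $2\mid\mid x$, and $\alpha=2$ if $4\mid x$.
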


\begin{proof} 
Recall that $x$, $y$ are coprime. The resultant of $H_7(x,y)$ and $x^4 - x^3 y + 3 x^2 y^2 - x y^3 + y^4$
is $7^2$. Thus any prime $p \ne 2$, $7$ dividing $H_7(x,y)$ cannot divide $c_4$ and divides $\Delta$,
and must therefore be a prime of multiplicative reduction. We know that $H_7(x,y)=7 z_2^l$ or $H_7(x,y)=z_2^l$.
Moreover, $7^2 \nmid H_7(x,y)$, so $7 \nmid z_2$. Thus the conductor $N$ is $\Rad(z_2)$ up to powers of $2$ and $7$.
We also see that the model for $E_{x,y}$ is minimal at any prime $p \ne 2$, $7$

Now $\ord_7(\Delta)=4$ or $2$. Hence the model for $E_{x,y}$ is minimal at $7$.
Since $7 \mid c_4$, we see that $E_{x,y}$ has additive reduction
at $7$, and so $\ord_7(N)=2$.

Finally, as $x$, $y$ are coprime we quickly get $\ord_2(c_4)=4$, $\ord_2(c_6)=5$ as well as $\ord_2(\Delta)=4$.
Thus the model for $E_{x,y}$ is also minimal at $2$ and we conclude that $\Delta_{\mathrm{min}}=\Delta$.
Note that $E_{x,y}=E_{y,x}$. Without loss of generality we may suppose that either $x$
is even or $z$ is even.
Applying Tate's Algorithm \cite[Section IV.9]{SilvII} shows the following
\begin{enumerate}
\item[(a)] if $2 \mid z$ then $\ord_2(N)=3$;
\item[(b)] if $2 \mid\mid x$ then $\ord_2(N)=3$;
\item[(c)] if $4 \mid x$ then $\ord_2(N)=2$.
\end{enumerate}
This completes the proof.
\end{proof}

We shall write $\rho_l^{x,y}$ for the Galois representation on the $l$-torsion of $E_{x,y}$.
\[\rho_l^{x,y} : \Gal(\overline{\Q}/\Q) \rightarrow \Aut(E_{x,y}[l]).\]

\begin{lem}\label{lem:Irreducibility77l}
For $l=5$ or primes $l \geq 11$ the representation $\rho_l^{x,y}$ is irreducible.
\end{lem}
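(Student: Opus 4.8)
The plan is to translate reducibility of $\rho_l^{x,y}$ into the existence of a $\Q$-rational cyclic subgroup of order $l$ on $E_{x,y}$, equivalently a noncuspidal point on $X_0(l)(\Q)$, and then to invoke Mazur's classification of such points. A point worth noting at the outset is that, in contrast to the Frey curve for signature $(5,5,l)$, the curve $E_{x,y}$ here has no rational $2$-isogeny: its $2$-division polynomial $X^3+a_2X^2+a_4X+a_6$ has no rational root (already for $(x,y)=(1,0)$ it is the irreducible cubic cutting out the real subfield of $L=\Q(\zeta)$). Consequently I would work with $X_0(l)$ directly, rather than with $X_0(2l)$ as in the proof of Lemma~\ref{lem:Irreducibility55l}.

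For primes $l \geq 11$ with $l \neq 13$ the argument is immediate or nearly so. Recall that for such $l$ the curve $X_0(l)$ has a noncuspidal rational point only for $l \in \{11,17,19,37,43,67,163\}$, and in each of these cases $X_0(l)$ has genus $\geq 1$ with only finitely many rational points. Hence for every prime $l \geq 11$, $l \neq 13$, outside this list, $\rho_l^{x,y}$ is automatically irreducible, while for $l \in \{11,17,19\}$ (and, should one wish, $37,43,67$) the elliptic curves admitting a rational $l$-isogeny have an explicit finite set of $j$-invariants. Writing $t=x/y$ and regarding \eqref{eqn:jinv} as a fixed rational function $j(t)$ of degree $12$, I would verify that $j(t)=j_0$ has no solution $t \in \PP^1(\Q)$ for each such $j_0$, exactly as in the proof of Lemma~\ref{lem:Irreducibility55l}.

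The genuinely delicate cases are the two genus-$0$ values $l=5$ and $l=13$, where $X_0(l)$ carries infinitely many rational points and Mazur's theorem is silent. For these I would analyse the isogeny character directly. Supposing $\rho_l^{x,y}$ reducible, write it as an extension of $\theta'$ by $\theta$ with $\theta\theta'=\chi_l$ the mod-$l$ cyclotomic character. At each prime $p \mid z_2$ of multiplicative reduction, Lemma~\ref{lem:DiscAndN77l} gives $l \mid \ord_p(\Delta_{\mathrm{min}})$, so $\rho_l^{x,y}$, and in particular $\theta$, is unramified at $p$; thus $\theta$ is unramified outside $\{2,7,l\}$. Since supersingular reduction at $l$ would make $\rho_l^{x,y}$ restricted to inertia $I_l$ irreducible, $E_{x,y}$ must be ordinary at $l$, and then $\theta|_{I_l}$ is trivial or equals $\chi_l$ on inertia. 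Combined with the local behaviour at $2$ and at $7$ (additive reduction, $\ord_7(N)=2$), this pins $\theta$ down to a short explicit list of Dirichlet characters valued in $\F_l^*$, each of which I would eliminate by comparing $\theta(\mathrm{Frob}_p)+\theta'(\mathrm{Frob}_p)$ with $a_p(E_{x,y}) \bmod l$ at one or two small auxiliary primes $p$.

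I expect the genus-$0$ cases $l=5,13$ to be the main obstacle. They demand a careful local study of $E_{x,y}$ at $2$, $7$ and $l$ in order to control the ramification of $\theta$, after which the surviving finitely many characters must be knocked out by a trace computation. By contrast, the cases $l \geq 11$ with $l \neq 13$ reduce to citing Mazur together with the routine check that the explicit formula \eqref{eqn:jinv} misses the finitely many exceptional $j$-invariants.
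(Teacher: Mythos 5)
For $l=11$ and $l \geq 17$ your argument coincides with the paper's: Mazur's classification of noncuspidal points on $X_0(l)(\Q)$ plus a check that the explicit $j$-invariant \eqref{eqn:jinv} misses the finite list of exceptional values. The real issue is $l \in \{5,13\}$, exactly the cases you flag as the main obstacle: what you offer there is a programme, not a proof. For $l=5$ your plan does succeed, in fact more easily than you indicate: $E_{x,y}$ has additive, potentially good reduction at $7$ with $\ord_7(\Delta_{\mathrm{min}}) \in \{2,4\}$, so the image of $I_7$ is cyclic of order $6$ or $3$; in a reducible representation the diagonal characters take values in $\F_5^*$, a group of order $4$ coprime to $3$, so a suitable power of a generator of inertia would be a unipotent element of order $3$ in $\mathrm{GL}_2(\F_5)$, which is impossible. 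But for $l=13$ this inertia argument evaporates, because $3 \mid \#\F_{13}^*=12$: characters of $(\Z/7\Z)^*$ of order $3$ and $6$ valued in $\F_{13}^*$ exist, so the local analysis leaves genuine candidates $\theta = \phi$ or $\phi\chi_{13}$ with $\phi$ of conductor $7$, and none of them can be killed by ramification considerations. Eliminating them by traces means comparing $\phi(p)+\phi^{-1}(p)p$ with $a_p(E_{x,y}) \bmod 13$ where $(x,y)$ is unknown, i.e.\ ranges over all residue classes mod $p$, and one must also allow $p \mid z_2$ (multiplicative reduction), where for $p \equiv 1 \pmod 7$ the resulting congruence $\phi(p)+\phi^{-1}(p)p \equiv \pm(1+p)$ is automatically satisfiable. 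This is a Kraus-style sieve that you neither execute nor show terminates; as written, $l=13$ is a hole in the proof.

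The paper disposes of both genus-zero cases with a short valuation argument that your proposal misses. Since $x$, $y$ are coprime, both $H_7(x,y)$ and $x^4-x^3y+3x^2y^2-xy^3+y^4$ are odd, so \eqref{eqn:jinv} forces $\ord_2(j)=8$ exactly. On the other hand, the explicit $j$-maps for $X_0(5)$ and $X_0(13)$, namely $j=(t^2+10t+5)^3/t$ and $j=(t^4+7t^3+20t^2+19t+1)^3(t^2+5t+13)/t$, only produce rational $j$ with $\ord_2(j)<0$ (when $\ord_2(t) \neq 0$) or with $\ord_2(j) \equiv 0 \pmod 3$ (when $t$ is a $2$-adic unit, since the factors outside the cube are then $2$-adic units). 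As $8$ is positive and not divisible by $3$, reducibility is impossible for $l \in \{5,13\}$. So either adopt this valuation trick, or you must actually carry out the character sieve for $l=13$ before your argument can be considered complete.
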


\begin{proof}
If $l=11$ or $l \geq 17$, then, by work of Mazur et al. on the $\Q$-rational points of $X_0(l)$, the irreducibility  follows by checking that the $j$-invariant of $E_{x,y}$ does not belong to an explicit list of $11$ values; see e.g.  \cite[Theorem 22]{Dahmen}.

Now let $l \in \{5,13\}$ and suppose that $\rho_l^{x,y}$ is reducible. Then the $j$-invariant of $E_{x,y}$
must be in the image of $X_0(l)(\Q)$ under the $j$ map $X_0(l) \rightarrow X(1)$. In \cite[Section 3.2]{Dahmen} this $j$ map is given explicitly
as
\begin{equation}\label{eqn:j}
j=
\begin{cases}
\frac{(t^2+10 t+ 5)^3}{t} & \text{if } l=5;\\
\frac{(t^4+7t^3+20t^2+19t+1)^3(t^2+5t+13)}{t} & \text{if } l=13.
\end{cases}
\end{equation}
In other words, this equation must have a $\Q$-rational solution $t$ where $j$
is the $j$-invariant of $E_{x,y}$. It is clear from \eqref{eqn:jinv}
that $\ord_2(j)=8$. It is easy to see that this is impossible from \eqref{eqn:j}. This completes the proof.
Alternatively, the irreducibility for $l \in \{5,13\}$ follows immediately from \cite[Theorem 60 and Table 3.1]{Dahmen} with $F(u,v)=u^3-u^2v-2uv^2+v^3$ and the remark that $F(x^2+y^2,xy)=H_7(x,y)$.
\end{proof}

Using Lemmata~\ref{lem:DiscAndN77l} and~\ref{lem:Irreducibility77l} we can apply modularity \cite{BCDT01} and level-lowering \cite{Ribet90}, \cite{Ribet94} as usual, to deduce the following.

\begin{lem}\label{lem:LevelLowering77l}
For a prime $l\not=2,3,7$, the Galois representation $\rho_l^{x,y}$ arises from a newform $f$ of level $N=2^\alpha 7^2$ where $\alpha=2$ or $3$.
\end{lem}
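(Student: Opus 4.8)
The plan is to run the standard Ribet-style level-lowering argument, using the conductor and minimal discriminant computed in Lemma~\ref{lem:DiscAndN77l} together with the irreducibility established in Lemma~\ref{lem:Irreducibility77l}. First I would invoke modularity of $E_{x,y}/\Q$ from~\cite{BCDT01}, which tells us that $\rho_l^{x,y}$ arises from a newform of level equal to the conductor $N(E_{x,y}) = 2^\alpha 7^2 \Rad(z_2)$, with $\alpha \in \{2,3\}$, as recorded in Lemma~\ref{lem:DiscAndN77l}. Since Lemma~\ref{lem:Irreducibility77l} guarantees that $\rho_l^{x,y}$ is irreducible for $l = 5$ and all primes $l \geq 11$ (and the hypothesis $l \neq 2,3,7$ excludes the remaining small primes), the level-lowering theorems of Ribet~\cite{Ribet90},~\cite{Ribet94} apply.

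The key computational step is to determine which primes get removed from the level. By level lowering, $\rho_l^{x,y}$ arises from a newform of level $N(E_{x,y})/M$, where $M$ is the product of the primes $p \,\|\, N(E_{x,y})$ (i.e.\ primes of multiplicative reduction) for which $l \mid \ord_p(\Delta_{\mathrm{min}})$. Here I would use the two conclusions of Lemma~\ref{lem:DiscAndN77l}: that $\Delta_{\mathrm{min}} = \Delta = 2^4 \cdot 7^2 H_7(x,y)^2$, and that the primes dividing $z_2$ are exactly the odd primes $p \neq 7$ of multiplicative reduction. For such a prime $p \mid \Rad(z_2)$, we have $H_7(x,y) = z_2^l$ or $7 z_2^l$ with $7 \nmid z_2$, so $\ord_p(\Delta_{\mathrm{min}}) = 2\,\ord_p(z_2^l) = 2l\,\ord_p(z_2)$, which is divisible by $l$. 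Hence every such $p$ is stripped out of the level. The primes $2$ and $7$, by contrast, are not of multiplicative type in the relevant way: $7$ has additive reduction (so $\ord_7(N) = 2$ and it cannot be removed), and at $2$ one checks $\ord_2(\Delta_{\mathrm{min}}) = 4$ is not divisible by $l$ for $l \geq 5$, so the factor $2^\alpha$ survives. This leaves precisely the level $N = 2^\alpha 7^2$ with $\alpha \in \{2,3\}$.

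The one point requiring a little care is the treatment of the prime $2$. When $2 \mid z$ the reduction at $2$ is potentially a subtlety, but Lemma~\ref{lem:DiscAndN77l} already fixes $\ord_2(\Delta_{\mathrm{min}}) = 4$ unconditionally (the model is minimal at $2$), so $l \nmid \ord_2(\Delta_{\mathrm{min}})$ for every prime $l \geq 5$, and the argument is uniform regardless of the parity of $z$; the value of $\alpha$ merely records whether $\ord_2(N)$ is $2$ or $3$ according to cases (a)--(c) of that lemma. I expect this to be the main (though still routine) obstacle, simply because the behaviour at the additive prime $2$ and at $7$ must be distinguished from the multiplicative primes dividing $z_2$ — but since both $\Delta_{\mathrm{min}}$ and $N$ are already pinned down explicitly in Lemma~\ref{lem:DiscAndN77l}, no further case analysis is needed, and the stated conclusion $N = 2^\alpha 7^2$ follows immediately.
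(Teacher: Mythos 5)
Your proposal is correct and takes essentially the same approach as the paper, which proves the lemma by combining modularity, the irreducibility of Lemma~\ref{lem:Irreducibility77l}, and Ribet level-lowering with the conductor/discriminant data of Lemma~\ref{lem:DiscAndN77l}, exactly as you do. The only cosmetic point is your treatment of the prime $2$: since $\ord_2(N)=\alpha\geq 2$, the prime $2$ is additive and hence never a candidate for removal under the criterion you yourself state (which applies only to primes exactly dividing $N$), so the check $l\nmid\ord_2(\Delta_{\mathrm{min}})=4$ is redundant, though harmless.
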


We again used {\tt MAGMA} to compute the newforms at these levels. We found
respectively $3$ and $8$ 
newforms (up to Galois conjugacy) at these levels.
Of these $2$ and $6$
are respectively rational newforms and therefore 
correspond to elliptic curves. We wrote a short {\tt MAGMA} script {\tt Modular77l.m} which contains these, as well as the remaining computations of this section.
Our first step is to eliminate as many of the newforms above as possible.

\begin{lem}\label{lem:Traces77l}
Suppose $\rho_l^{x,y}$ arises from a newform 
\begin{equation}\label{eqn:fourier}
f=q+\sum_{i \geq 2} a_i(f) q^i.
\end{equation}
Let $K=\Q(a_2(f),a_3(f),\dots)$ be the number field generated by the coefficients
of $f$. Let $p \ne 2$, $7$ be prime. If $K \ne \Q$ we also impose $p \ne l$.
\begin{itemize}
\item If $p \nmid z_2$, then $l \mid \Norm_{K/\Q} (a_p(E_{x,y})-a_p(f))$.
\item If $p \mid z_2$, then $l \mid \Norm_{K/\Q} ((p+1)^2-a_p(f)^2)$.
\end{itemize}
\end{lem}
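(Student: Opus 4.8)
The plan is to use the standard comparison of mod $l$ Galois representations. Since $\rho_l^{x,y}$ arises from the newform $f$ (by Lemma~\ref{lem:LevelLowering77l}), there is a prime $\lambda$ of $K$ above $l$ such that $\rho_l^{x,y} \simeq \overline{\rho}_{f,\lambda}$, the mod $\lambda$ reduction of the $\lambda$-adic representation attached to $f$. For a prime $p$ of good reduction for both $E_{x,y}$ and $f$ (which forces $p \ne 2,7$ and, when $f$ is not rational so that the representation lives over a genuine extension, $p \ne l$), comparing traces of Frobenius at $p$ gives the congruence
\[
a_p(E_{x,y}) \equiv a_p(f) \pmod{\lambda}.
\]
Taking norms from $K$ to $\Q$ converts this $\lambda$-adic congruence into the divisibility $l \mid \Norm_{K/\Q}(a_p(E_{x,y}) - a_p(f))$, which is the first bullet. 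The whole content is the usual dictionary: an elliptic curve with good reduction at $p$ has $a_p(E_{x,y}) = p+1 - \#E_{x,y}(\F_p)$, and $\overline{\rho}_{f,\lambda}(\mathrm{Frob}_p)$ has trace $a_p(f) \bmod \lambda$ and determinant $p \bmod \lambda$.

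First I would fix $\lambda \mid l$ with $\rho_l^{x,y} \simeq \overline{\rho}_{f,\lambda}$ and record the two cases according to whether $p \mid z_2$ or $p \nmid z_2$. The key point distinguishing the cases is the reduction type of $E_{x,y}$ at $p$. By Lemma~\ref{lem:DiscAndN77l}, $\Delta_{\mathrm{min}} = \Delta = 2^4 \cdot 7^2 H_7(x,y)^2$ and $H_7(x,y)$ equals $z_2^l$ or $7 z_2^l$; hence for $p \ne 2,7$ we have $p \mid \Delta$ precisely when $p \mid z_2$, and in that case (as noted in the proof of Lemma~\ref{lem:DiscAndN77l}) $p \nmid c_4$, so $E_{x,y}$ has multiplicative reduction at $p$. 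When $p \nmid z_2$ the curve has good reduction at $p$, and the trace comparison directly yields $a_p(E_{x,y}) \equiv a_p(f) \pmod{\lambda}$; applying $\Norm_{K/\Q}$ gives the first bullet.

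For the second bullet, when $p \mid z_2$ the curve $E_{x,y}$ has multiplicative reduction at $p$, so $p$ is a prime where $\rho_l^{x,y}$ is unramified only after comparison with the Steinberg/level-raising behaviour: the image of Frobenius (more precisely, the trace of Frobenius on the mod $\lambda$ representation viewed at such a prime) satisfies $a_p(E_{x,y}) \equiv \pm(p+1) \pmod{l}$, the sign recording split versus non-split reduction. Comparing with $\overline{\rho}_{f,\lambda}$ gives $a_p(f) \equiv \pm(p+1) \pmod{\lambda}$, i.e. $\lambda \mid (p+1)^2 - a_p(f)^2$, and taking the norm from $K$ to $\Q$ produces $l \mid \Norm_{K/\Q}((p+1)^2 - a_p(f)^2)$. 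This is exactly the standard statement for level-lowered representations at primes of multiplicative reduction (cf.\ the references cited after Lemma~\ref{lem:Traces55l}, e.g.\ \cite[Propositions 15.2.2 and 15.2.3]{Cohen} or \cite[Theorem 36]{Dahmen}), applied here with coefficient field $K$ rather than $\Q$.

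The only genuinely delicate point is bookkeeping about which primes $p$ are admissible: one must ensure $p$ is a prime of good reduction for $f$ (automatic for $p \ne 2,7$ given the level $2^\alpha 7^2$), and when $K \ne \Q$ one must exclude $p = l$ so that reduction modulo $\lambda$ behaves well and the norm argument is valid; this is precisely the hypothesis imposed in the statement. Everything else is the routine translation of $\rho_l^{x,y} \simeq \overline{\rho}_{f,\lambda}$ into congruences on traces, followed by an application of $\Norm_{K/\Q}$ to descend from a congruence modulo $\lambda$ to a divisibility by $l$ in $\Z$. I expect the proof to reduce to a one-line invocation of the cited propositions together with the reduction-type analysis already carried out in Lemma~\ref{lem:DiscAndN77l}.
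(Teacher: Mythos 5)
Your proposal is correct and follows essentially the same route as the paper: the paper's own proof is a one-line invocation of the standard trace-of-Frobenius comparison, citing exactly the references you name (\cite[Propositions 15.2.2 and 15.2.3]{Cohen} and \cite[Theorem 36]{Dahmen}). Your unpacking of the two cases (good reduction when $p \nmid z_2$, multiplicative reduction with $l \mid \ord_p(\Delta_{\mathrm{min}})$ when $p \mid z_2$) and the passage from a congruence modulo $\lambda$ to a divisibility of the norm by $l$ is precisely what those cited results encapsulate.
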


\begin{proof}
This follows from comparing traces of Frobenius; see e.g. \cite[Propositions 15.2.2 and 15.2.3]{Cohen} or \cite[Theorem 36]{Dahmen}.
\end{proof}

Specializing $E_{x,y}$ at a trivial primitive integer solution with $xy=0$
(i.e. $(x,y)=(\pm 1, 0)$ or $(0,\pm 1)$ ), yields $E_{196a1}$, and specializing at
a trivial primitive integer solution with $z=0$ (i.e. $(x,y)=(\pm 1,\mp 1)$)
yields $E_{392c1}$. 
Using the basic congruences from the lemma above, we can quickly eliminate all the 
(Galois conjugacy classes of) newforms at the levels $196$ and $392$
for all $l$ simultaneously,  except of course the two
newforms corresponding to the two elliptic curves we just obtained by specialization of $E_{x,y}$.

\begin{lem}\label{lem:ResultAfterCongruences77l}
For a prime $l\not=2,3,7$, the Galois representation $\rho_l^{x,y}$ arises from $E_{196a1}$
or $E_{392c1}$. 
\end{lem}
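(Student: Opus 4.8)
The plan is to use the trace congruences of Lemma~\ref{lem:Traces77l} to discard, uniformly in $l$, every newform of level $196$ or $392$ other than the two coming from $E_{196a1}$ and $E_{392c1}$. First I would enumerate the relevant newforms: by Lemma~\ref{lem:LevelLowering77l} the representation $\rho_l^{x,y}$ arises from a newform $f$ of level $2^\alpha 7^2$ with $\alpha \in \{2,3\}$, i.e. of level $196$ or $392$, and {\tt MAGMA} produces the finitely many Galois-conjugacy classes ($3$ and $8$ respectively). Two of these are the elliptic curves $E_{196a1}$ and $E_{392c1}$ obtained by specializing $E_{x,y}$ at the trivial solutions with $xy=0$ and with $z=0$; these we keep, and the goal is to eliminate all the others.

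For each newform $f$ to be eliminated, with coefficient field $K=\Q(a_2(f),a_3(f),\dots)$, I would attach to each auxiliary prime $p\ne 2,7$ the sets and integer
\[
A_p := \{\, a_p(E_{a,b}) : (a,b) \in \F_p^2,\ H_7(a,b) \ne 0 \,\},
\qquad
T_p := A_p \cup \{\pm(p+1)\},
\]
\[
B_p(f) := \prod_{t \in T_p} \Norm_{K/\Q}\bigl(a_p(f)-t\bigr).
\]
The point is that, for $p\ne 7$, good reduction of $E_{x,y}$ at $p$ is equivalent to $p\nmid z_2$ (since $\Delta_{\mathrm{min}}=2^4 7^2 H_7^2$ and $p\mid H_7 \Leftrightarrow p\mid z_2$), in which case $a_p(E_{x,y})\in A_p$; otherwise $p\mid z_2$ and Lemma~\ref{lem:Traces77l} forces $a_p(f)^2\equiv(p+1)^2$. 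In either case $l\mid B_p(f)$, provided $l\ne p$ when $K\ne\Q$. Since each $B_p(f)$ is a fixed nonzero integer independent of the solution, every $l$ for which $\rho_l^{x,y}$ could arise from $f$ must divide $\gcd_p B_p(f)$ over the chosen primes $p$. I expect that for each discarded newform this gcd, after stripping the auxiliary primes $p$ themselves, is supported only on $2,3,7$, so that no $l\ne 2,3,7$ survives; the finitely many exceptional exponents $l=p$ excluded by the $K\ne\Q$ caveat are cleared by a second auxiliary prime $p'\ne l$. This is the finite computation carried out in the script {\tt Modular77l.m}.

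The main obstacle is making the elimination genuinely uniform in $l$: one must verify, newform by newform, that some small collection of primes $p$ yields a gcd divisible only by $2,3,7$, and in particular nonzero. A newform agreeing with the Frey curve for infinitely many $p$ would have all $B_p(f)=0$ and could never be ruled out by this congruence argument—so the real content is precisely that $E_{196a1}$ and $E_{392c1}$ are the only newforms exhibiting such persistent agreement, while every other newform disagrees modulo an integer whose prime support we can control. The remaining difficulty is purely bookkeeping: handling the $l=p$ exceptions for the newforms with $K\ne\Q$ by building in redundancy among the auxiliary primes.
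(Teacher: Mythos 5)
Your overall strategy---enumerate the newforms at levels $196$ and $392$ supplied by Lemma~\ref{lem:LevelLowering77l}, then kill all but the two target forms via the trace congruences of Lemma~\ref{lem:Traces77l} at a few auxiliary primes---is exactly the paper's. But your sieve is strictly weaker than the paper's in one crucial respect, and this is a genuine gap: you put $\pm(p+1)$ into $T_p$ for \emph{every} auxiliary prime $p$, whereas the paper includes these values only when $p \equiv 1 \pmod 7$. The justification for that restriction is an observation you never make: if $p \mid z_2$ then $p \mid H_7(x,y)$ with $x,y$ coprime, so $H_7$ has a nontrivial zero modulo $p$, which forces $p$ to split in $\Q(\zeta_7)$, i.e.\ $p \equiv 1 \pmod 7$. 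Hence for $p \not\equiv 1 \pmod 7$ the multiplicative-reduction alternative of Lemma~\ref{lem:Traces77l} can never occur, and one may take $T_p = A_p$.

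This refinement is not cosmetic; without it the elimination fails. The paper's workhorse prime is $p=3$ (where $3 \not\equiv 1 \pmod 7$, so $T_3 = A_3 = \{-1,3\}$), which disposes of the classes $196b$, $392a$, $392b$, $392f$ for all $l \neq 2,3,7$. In your version $T_3 = \{-1,3,4,-4\}$, and since $-(3+1) \equiv 1 \pmod 5$, no newform with $a_3(f)=1$ can ever be eliminated for $l=5$ using $p=3$. This case occurs: the class $196b$ is the quadratic twist of $E_{196a1}$ by $-7$, so $a_3(196b) = \left(\frac{-7}{3}\right)a_3(196a) = -(-1) = 1$. Worse, no other small prime rescues your sieve for this form and $l=5$: you need $\left(\frac{-7}{p}\right) = -1$, since otherwise $a_p(196b) = a_p(196a) = a_p(E_{1,0}) \in A_p$ and your congruence holds with difference zero; among the remaining primes, $p=11,17$ fail because $a_p(196b) \equiv \pm(p+1) \pmod 5$ (indeed $a_{11}(196b)=-3\equiv 12$ and $a_{17}(196b) = 3 \equiv 18 \pmod 5$), while $p = 5, 13, 19$ fail because $a_p(196b) \bmod 5$ already lies in $A_p \bmod 5$ (e.g.\ $a_5(196b) = 3 = a_5(E_{1,2})$, $a_{13}(196b) = 2 = a_{13}(E_{1,5})$, and $a_{19}(196b) = -3 \equiv a_{19}(E_{1,3}) = 7 \pmod 5$). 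For larger $p$ the set $A_p \bmod 5$ rapidly covers every residue class, so the gcd you propose retains the factor $5$ permanently: the ``finite computation'' you describe would not terminate successfully, and it is not the computation in {\tt Modular77l.m}, which implements the refined $T_p$. To repair the proof you must prove and use the implication $p \mid z_2 \Rightarrow p \equiv 1 \pmod 7$ exactly as the paper does.
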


\begin{proof}
By Lemma~\ref{lem:LevelLowering77l} we have that $\rho_l^{x,y}$ arises from a newform $f$ of level $2^\alpha 7^2$ where $\alpha=2$ or $3$. Let $p\not=2,7$ denote  a prime and define the sets
\begin{align*}
A_p & :=\{p+1-\#{E}_{a,b}(\F_p) : a,b \in \F_p, \quad H_7(a,b) \not=0\}, \\
T_p & :=
\begin{cases}
A_p & \text{if } p \not \equiv 1 \pmod{7}\\
A_p \cup \{\pm (1+p)\} & \text{if } p \equiv 1 \pmod{7}.
\end{cases}
\end{align*}
Obviously, if $p \nmid z_2$, then $a_p(E_{x,y}) \in A_p$. Furthermore, $p\equiv 1 \pmod{7}$ if and only $p$ splits completely in $\Z[\zeta]$ if and only $H_7(a,b)=0$ for some $a,b \in \F_p$ not both zero (for this last step we use $p\not=7$).
So we obtain from Lemma~\ref{lem:Traces77l} that for any prime $p\not=2,7$ we have
\begin{equation}\label{eqn:TraceCongruence77l}
l \mid \Norm_{K/\Q}(a_p(f) - t) \mathrm{\ for\ some\ } t \in T_p
\end{equation}
or, in case $K\not=\Q$, that $l=p$.

If $f$ is not rational, we compute that $a_3(f) \in \{\pm \sqrt{2}, \pm \sqrt{8}\}$ and $T_3=\{-1,3\}$. In this case~\eqref{eqn:TraceCongruence77l} with $p=3$ reduces to $l=7$, hence $l=7$ or $l=p=3$. Since $l=3,7$ are values outside our consideration we conclude that we have eliminated the possibility that $\rho_l^{x,y}$ arises form a non-rational newform. 
Similarly, for any rational newform $f$ (of level $2^\alpha 7^2$ where $\alpha
\in \{2,3\}$) not corresponding to either of $E_{196a1}$, $E_{392c1}$, 
we can find a single prime $p\leq 23$, $p\not=2,7$ such
that~\eqref{eqn:TraceCongruence77l} does not hold for any prime $l\not=2,3,7$.
To be specific, for the rational newforms corresponding to an elliptic curve
whose isogeny class has Cremona reference one of $196b, 392a, 392b, 392f$
we can take $p=3$, for the isogeny classes given
by $392e,392d$ we can take $p=11,13$ respectively. 
\end{proof}

So far we have not distinguished between the cases $7 \nmid z$ and $7|z$. To refine the lemma above with respect to these two cases we can use the following.

\begin{lem}\label{lem:ImagesOfInertia}
Let $E_1, E_2$ be elliptic curves over $\Q$ with potentially good reduction at a prime $p\geq 5$. If $\gcd(12,\ord_p(\Delta(E_1))) \not=\gcd(12,\ord_p(\Delta(E_2)))$, then for all primes $l\not=2,p$ we have $\rho_l^{E_1} \not\simeq \rho_l^{E_2}$.
\end{lem}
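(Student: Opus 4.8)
The plan is to translate the hypothesis into a statement about the order of the image of an inertia group at $p$ under the mod-$l$ representations, and then to show that this order is a local invariant which $\rho_l^{E_i}$ reads off. First I would record that the quantity $\gcd(12,\ord_p(\Delta(E_i)))$ does not depend on the chosen Weierstrass model: passing to another integral model over $\Q_p$ multiplies $\Delta$ by a twelfth power, so $\ord_p(\Delta)$ changes by a multiple of $12$ and $\gcd(12,\ord_p(\Delta))$ is unchanged. Hence I may replace $\Delta(E_i)$ by the minimal discriminant, and the hypothesis $\gcd(12,\ord_p(\Delta(E_1)))\neq\gcd(12,\ord_p(\Delta(E_2)))$ is equivalent to $e(E_1)\neq e(E_2)$, where $e(E):=12/\gcd(12,\ord_p(\Delta(E)))$ is the \emph{semistability defect} of $E$ at $p$.

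Next I would invoke the standard theory of potentially good reduction for $p\geq 5$ (Kraus, Serre). Since $p\geq 5$, wild inertia acts trivially, and $E_i$ acquires good reduction over the unique tamely ramified extension of $\Q_p^{\mathrm{nr}}$ of degree $e(E_i)$. Consequently, for every prime $l\neq p$, the image of an inertia group $I_p$ at $p$ under the $l$-adic representation is cyclic of order exactly $e(E_i)$: it is the image of the (procyclic) tame quotient, acting faithfully on the Tate module of the good reduction $\tilde E_i$ through $\Aut(\tilde E_i)$. Thus $e(E_i)$ is encoded in the inertial behaviour of the $l$-adic representation.

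The key local-to-mod-$l$ step, and the point at which the restriction $l\neq 2$ enters, is to pass from the $l$-adic image to the mod-$l$ image without losing this order. For $l\geq 3$ the kernel of the reduction map $\mathrm{GL}_2(\Z_l)\to\mathrm{GL}_2(\F_l)$ is torsion-free, so it meets the finite cyclic group $\rho_{l^\infty}^{E_i}(I_p)$ trivially; hence $\rho_l^{E_i}(I_p)$ is again cyclic of order exactly $e(E_i)$. For $l=2$ this fails, because $-I\equiv I \pmod 2$ collapses the contribution of $[-1]\in\Aut(\tilde E_i)$, so the order may drop; this is precisely why $l=2$ must be excluded. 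With this in hand the conclusion is formal: if $\rho_l^{E_1}\simeq\rho_l^{E_2}$ for some prime $l\neq 2,p$, then restricting to $I_p$ shows that the two images are conjugate in $\mathrm{GL}_2(\F_l)$ and so have equal order, forcing $e(E_1)=e(E_2)$ and hence $\gcd(12,\ord_p(\Delta(E_1)))=\gcd(12,\ord_p(\Delta(E_2)))$, contrary to hypothesis. This proves $\rho_l^{E_1}\not\simeq\rho_l^{E_2}$ for all $l\neq 2,p$.

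The main obstacle is the bookkeeping in the two middle steps: establishing that the inertia image in the $l$-adic representation has order exactly the semistability defect $e$ (not merely a divisor of it), and confirming torsion-freeness of the congruence kernel for odd $l$ so that no order is lost upon reduction. Both facts are classical, but they are exactly where the hypotheses are used: $p\geq 5$ guarantees tameness (so inertia acts through the cyclic tame quotient), and $l\neq 2$ guarantees that $\Aut(\tilde E_i)$ still acts faithfully on $\tilde E_i[l]$, equivalently that $[-1]\neq\mathrm{id}$ on the $l$-torsion. Everything outside of citing these two standard inputs is routine.
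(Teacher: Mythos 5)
Your proposal is correct and takes essentially the same approach as the paper: the paper's entire proof is the one-line remark ``This follows by comparing images of inertia; see e.g.\ \cite{Kraus90}'', and your argument is precisely that standard comparison spelled out in detail --- identifying $\gcd(12,\ord_p(\Delta))$ with the semistability defect $e$, noting that for $p\geq 5$ and $l\neq p$ the inertia image is cyclic of order exactly $e$, and using torsion-freeness of the kernel of $\mathrm{GL}_2(\Z_l)\to\mathrm{GL}_2(\F_l)$ for $l\geq 3$ so that this order survives reduction mod $l$.
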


\begin{proof}
This follows by comparing images of inertia; see e.g. \cite{Kraus90}.
\end{proof}

We can now strengthen Lemma~\ref{lem:ResultAfterCongruences77l} as follows.

\begin{lem}\label{lem:ResultAfterImagesOfInertia77l}
Let $l\not=2,3,7$ be prime.
If $7 \nmid z$ then $\rho_l^{x,y}$ arises from  $E_{196a1}$.
If $7 |  z$ then $\rho_l^{x,y}$ arises from $E_{392c1}$.
\end{lem}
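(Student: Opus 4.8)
The plan is to combine Lemma~\ref{lem:ResultAfterCongruences77l}, which already pins $\rho_l^{x,y}$ down to arising from one of $E_{196a1}$ or $E_{392c1}$, with the images-of-inertia criterion of Lemma~\ref{lem:ImagesOfInertia} applied at the prime $p=7$. The key observation is that, for curves with potentially good reduction at $7$, the quantity $\gcd\bigl(12,\ord_7(\Delta_{\mathrm{min}})\bigr)$ is an invariant of the mod-$l$ representation for every $l\neq 2,7$, and I expect it both to separate the two candidate curves and to record precisely whether $7\mid z$. Since the hypothesis $l\neq 2,3,7$ gives $l\neq 2,7$ automatically, Lemma~\ref{lem:ImagesOfInertia} will then be directly applicable.

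First I would read off $\ord_7(\Delta_{\mathrm{min}}(E_{x,y}))$ from Lemma~\ref{lem:DiscAndN77l}, which gives $\Delta_{\mathrm{min}}=\Delta=2^4\cdot 7^2\,H_7(x,y)^2$ with the model minimal at $7$ (the valuation being below $12$). By Lemma~\ref{lem:gcd} we have $7^2\nmid H_7(x,y)$ and $7\mid H_7(x,y)\Leftrightarrow 7\mid x+y\Leftrightarrow 7\mid z$, so $\ord_7(H_7(x,y))\in\{0,1\}$ and
\[
\gcd\bigl(12,\ord_7(\Delta_{\mathrm{min}}(E_{x,y}))\bigr)=\begin{cases}2 & \text{if } 7\nmid z,\\ 4 & \text{if } 7\mid z.\end{cases}
\]
The same invariant for the two fixed curves follows from the fact that $E_{196a1}$ and $E_{392c1}$ are the specializations of $E_{x,y}$ at $(x,y)=(1,0)$ (where $H_7=1$) and $(x,y)=(1,-1)$ (where $H_7=7$): the discriminant formula and minimality at $7$ give $\ord_7(\Delta_{\mathrm{min}}(E_{196a1}))=2$ and $\ord_7(\Delta_{\mathrm{min}}(E_{392c1}))=4$ (these can equally be read off the tables), so the invariant equals $2$ for $E_{196a1}$ and $4$ for $E_{392c1}$.

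Before invoking Lemma~\ref{lem:ImagesOfInertia} I must verify its standing hypothesis that the curves in question have potentially good reduction at $7$, i.e.\ that $\ord_7(j)\ge 0$. Writing $Q=x^4-x^3y+3x^2y^2-xy^3+y^4$, so that $c_4=2^4\cdot 7\,Q$ and $j=c_4^3/\Delta$, one computes $\ord_7(j)=1+3\,\ord_7(Q)-2\,\ord_7(H_7(x,y))$. When $7\nmid z$ this is manifestly positive; when $7\mid z$ one has $7\mid x+y$, and then the identity $Q(x,-x)=7x^4$ shows $7\mid Q$, so $\ord_7(j)\ge 0$ again. The two specialized curves are covered by the same computation. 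This potentially-good-reduction check, and in particular the congruence $7\mid x+y\Rightarrow 7\mid Q$, is the only genuinely computational point and the place I would be most careful; the rest is a matching of invariants.

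Finally I would conclude by applying Lemma~\ref{lem:ImagesOfInertia} at $p=7$. In the case $7\nmid z$, the pair $(E_{x,y},E_{392c1})$ has invariants $2\neq 4$, so $\rho_l^{x,y}\not\simeq\rho_l^{E_{392c1}}$ for every prime $l\neq 2,7$; combined with Lemma~\ref{lem:ResultAfterCongruences77l} this forces $\rho_l^{x,y}$ to arise from $E_{196a1}$. Symmetrically, in the case $7\mid z$ the pair $(E_{x,y},E_{196a1})$ has invariants $4\neq 2$, ruling out $E_{196a1}$ and forcing $\rho_l^{x,y}$ to arise from $E_{392c1}$. This establishes the lemma.
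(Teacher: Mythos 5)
Your proposal is correct and follows essentially the same route as the paper's proof: both establish potentially good reduction at $7$ via the $j$-invariant formula and the congruence relating $7\mid Q$ to $7\mid H_7$, compute $\ord_7(\Delta_{\mathrm{min}})=2$ or $4$ according as $7\nmid z$ or $7\mid z$, match these against the values $2$ and $4$ for $E_{196a1}$ and $E_{392c1}$, and conclude with Lemma~\ref{lem:ImagesOfInertia} combined with Lemma~\ref{lem:ResultAfterCongruences77l}. The only cosmetic differences are that you derive the fixed curves' invariants from the specializations $(x,y)=(1,0)$ and $(1,-1)$ rather than quoting them, and you verify only the one implication $7\mid x+y\Rightarrow 7\mid Q$ that is actually needed.
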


\begin{proof}
Considering $F:=x^4-x^3y+3x^2y^2-xy^3+y^4$ modulo $7$, we obtain that $7 \mid
F$ if and only if $7 \mid H_7$. Since $7^2 \nmid H_7$ we get from the
invariants of $E_{x,y}$ that $\ord_7(j) \geq 1$, so $E_{x,y}$ has potentially
good reduction at $7$. Furthermore, if $7 \nmid z$, then $\ord_7(\Delta)=2$,
and if $7 \mid z$, then $\ord_7(\Delta)=4$. The curves $E_{196a1}$ and $E_{392c1}$,
also have potentially good reduction at $7$ and finally
$\ord_7(\Delta(E_{196a1}))=2$
and
$\ord_7(\Delta(E_{392c1}))=4$.
The lemma follows from Lemma~\ref{lem:ImagesOfInertia}.
\end{proof}

\begin{rem}
To prove Lemma~\ref{lem:ResultAfterImagesOfInertia77l} we used image of inertia arguments.
It turns out that one can also eliminate $E_{196a1}$  when $7 \mid z$ for, say, $l<100$ with a simple application of Kraus' method.
The curve $E_{392c1}$ (when 7 $\nmid z$) is not susceptible to this method.
\end{rem}

We now turn our attention to a result involving the exponents $(r,s)$
in~\eqref{eqn:zetI} and~\eqref{eqn:zetII}, after which we will complete the proof
of Proposition~\ref{prop:Modular77l}.

\begin{lem}\label{lem:ExpSieve77l}
Let $E_0/\Q$ be an elliptic curve, let $p \ne 2$, $7$ be prime, let $l\not=2,3,7$ be prime, and let $g \in \{1,7\}$.
Denote by $\cA_g (E_0,p)$ the set of $(a,b) \in \F_p^2-\{0,0\}$ such that $(a+b)g$ and $H_7(a,b)/g$ are both $l$-th powers in $\F_p$, and
\begin{itemize}
\item either $H_7(a,b) \ne 0$ and $a_p(E_0) \equiv a_p(E_{a,b}) \pmod{l}$,
\item or $H_7(a,b)=0$ and $a_p(E_0)^2 \equiv (p+1)^2 \pmod{l}$.
\end{itemize}
Let $\p_1,\dots,\p_m$ be the prime ideals
of $\Z[\zeta]$ dividing $p$. Write $\kappa_i$ for the residue class field
$\Z[\zeta]/\p_i$ and $\pi_i$ for the corresponding natural map 
\[\pi_i : \Z[\zeta]/p \Z[\zeta] \rightarrow \kappa_i.\]
Denote by $\cB_g(E_0,p)$ the set of pairs $(\mu,\eta)$ with $0 \leq \mu,\eta < l$,
such that there exists $(a,b) \in \cA_g(E_0,p)$ with
\[\pi_i \left( \frac{a+b\zeta}{(1-\zeta)^{\ord_7(g)}(1+\zeta)^\mu (1+\zeta^2)^\eta } \right)\]
an $l$-th power in $\kappa_i$ for $i=1,\dots,m$.
\begin{enumerate}
\item[(a)]
If $E_{x,y}$ arises from $E_0$ and $7 \nmid z$, then~\eqref{eqn:zetI} holds for some $(r,s) \in \cB_1(E_0,p)$.
\item[(b)]
If $E_{x,y}$ arises from $E_0$ and $7 \mid z$, then~\eqref{eqn:zetII} holds for some $(r,s) \in \cB_7(E_0,p)$.
\end{enumerate}
\end{lem}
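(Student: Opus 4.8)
The plan is to exploit the two faces of each solution simultaneously: its reduction modulo the rational prime $p$, which feeds the modular/Frey data through $\cA_g(E_0,p)$, and its reduction modulo the primes $\p_i$ of $\Z[\zeta]$, which records the ambiguity exponents $(r,s)$ through $\cB_g(E_0,p)$. I treat (a) and (b) uniformly by setting $g=1$ in the case $7\nmid z$ (so $\ord_7(g)=0$) and $g=7$ in the case $7\mid z$ (so $\ord_7(g)=1$), matching the factor $(1-\zeta)^{\ord_7(g)}$ to the factorizations \eqref{eqn:zetI} and \eqref{eqn:zetII}. I fix the $(r,s)$ that actually occurs in the relevant factorization, put $a:=x\bmod p$ and $b:=y\bmod p$ in $\F_p$, and note $(a,b)\neq(0,0)$ since $\gcd(x,y)=1$. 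The goal is then to verify that $(a,b)\in\cA_g(E_0,p)$ and that this same pair witnesses $(r,s)\in\cB_g(E_0,p)$.

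To see $(a,b)\in\cA_g(E_0,p)$, I first check the two $l$-th power conditions directly from the integer factorizations. In case (a), \eqref{eqn:Factor77lCaseI} gives $a+b\equiv z_1^l$ and $H_7(a,b)\equiv z_2^l\pmod p$, so $(a+b)g$ and $H_7(a,b)/g$ (with $g=1$) are $l$-th powers in $\F_p$; in case (b), \eqref{eqn:Factor77lCaseII} gives $7(a+b)\equiv z_1^l$ and $H_7(a,b)/7\equiv z_2^l\pmod p$, using $p\neq 7$ so that $7$ is invertible, again yielding $l$-th powers. For the trace condition I split on $H_7(a,b)$. Since $\Delta=2^4\cdot 7^2 H_7(x,y)^2$ and $p\neq 2,7$, the curve $E_{x,y}$ has good reduction at $p$ exactly when $H_7(a,b)\neq 0$, in which case its reduction is $E_{a,b}$ and $a_p(E_{x,y})=a_p(E_{a,b})$; moreover $H_7(a,b)\neq 0\Leftrightarrow p\nmid z_2$. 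As $E_0$ is rational we have $K=\Q$ in Lemma~\ref{lem:Traces77l}, so its two bullets read $a_p(E_0)\equiv a_p(E_{x,y})\pmod l$ when $p\nmid z_2$ and $(p+1)^2\equiv a_p(E_0)^2\pmod l$ when $p\mid z_2$. These are precisely the two alternatives defining $\cA_g$, so $(a,b)\in\cA_g(E_0,p)$.

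For the membership $(r,s)\in\cB_g(E_0,p)$ I reduce the $\Z[\zeta]$-factorization modulo each $\p_i$. Taking $(\mu,\eta)=(r,s)$, the factorizations \eqref{eqn:zetI}/\eqref{eqn:zetII} give
\[
\pi_i\!\left(\frac{a+b\zeta}{(1-\zeta)^{\ord_7(g)}(1+\zeta)^{r}(1+\zeta^2)^{s}}\right)=\pi_i(\beta)^l,
\]
an $l$-th power in $\kappa_i$, because $x+\zeta y\equiv a+b\zeta\pmod{p\Z[\zeta]}$. The one point needing care is that the denominator must reduce to a unit in $\kappa_i$: the factors $1+\zeta$ and $1+\zeta^2$ are units in $\OO$, and the factor $1-\zeta$ (present only when $g=7$) generates the prime above $7$, hence is coprime to $p$ and so reduces to a nonzero element of $\kappa_i$ since $p\neq 7$. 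Thus $(a,b)$ is a valid witness and $(r,s)\in\cB_g(E_0,p)$, proving both (a) and (b). The proof is essentially a bookkeeping translation; the only genuinely delicate steps are aligning the vanishing of $H_7(a,b)$ with the divisibility $p\mid z_2$ and confirming the invertibility of the unit and $(1-\zeta)$ factors modulo $\p_i$, both of which hinge on the hypotheses $p\neq 2,7$.
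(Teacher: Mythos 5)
Your proof is correct and follows essentially the same route as the paper's (much terser) argument: reduce $(x,y)$ modulo $p$ to produce a witness in $\cA_g(E_0,p)$ via Lemma~\ref{lem:Traces77l} and the factorizations~\eqref{eqn:Factor77lCaseI},~\eqref{eqn:Factor77lCaseII}, then reduce the $\Z[\zeta]$-factorization~\eqref{eqn:zetI}/\eqref{eqn:zetII} modulo the $\p_i$ to place $(r,s)$ in $\cB_g(E_0,p)$. The details you supply --- the equivalence $H_7(a,b)\neq 0 \Leftrightarrow p\nmid z_2$, the identification of the reduction of $E_{x,y}$ with $E_{a,b}$, and the invertibility of $(1-\zeta)$, $(1+\zeta)$, $(1+\zeta^2)$ modulo $\p_i$ --- are exactly what the paper leaves implicit.
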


\begin{proof}
Let $g:=\gcd(x+y,H_7(x,y))$. By Lemma~\ref{lem:Traces77l} and~\eqref{eqn:Factor77lCaseI} and~\eqref{eqn:Factor77lCaseII}
we see that if $\rho_l^{x,y}$ arises from $E_0$, then $(x,y) \equiv (a,b) \pmod{l}$ for some $(a,b) \in \cA_g(E_0,p)$. 
The statement now follows directly by taking into account that the factorization of $x^7+y^7$ in $\Z[\zeta]$ yields~\eqref{eqn:zetI} and ~\eqref{eqn:zetII}.
\end{proof}

\begin{proof}[Proof of Proposition~\ref{prop:Modular77l}]
Let $(x,y,z)$ be a primitive integer solution to~\eqref{eqn:77l} with $l \in L_7$. 
We know that for some $0\leq r,s < l$ we have~\eqref{eqn:zetI} if $7 \nmid z$ and~\eqref{eqn:zetII} if $7 \mid z$.
Moreover, from Lemma~\ref{lem:ResultAfterImagesOfInertia77l} 
we know that $\rho_l^{x,y}$ arises from $E_{196a1}$ if $7 \nmid z$ and from $E_{392c1}$ if $7  \mid z$.
By Lemma~\ref{lem:ExpSieve77l},  for any prime $p \ne 2$, $7$, if $7 \nmid z$, then
\[(r,s) \in \cB_1(E_{196a1},p)\]
and if $7 \mid z$, then
\[(r,s) \in \cB_7(E_{392c1},p).\]
We wrote a short {\tt MAGMA} script to compute $\cB_g(E_0,p)$.
We found that for every prime $l \in L_7$ there exist primes $p_1, p_2$ such that
\[\cB_1(E_{196a1},p_1)=(0,0) \quad \text{and} \quad  \cB_7(E_{392c1},p_2)=(0,0).\]
This proves the proposition (see the  {\tt MAGMA} script {\tt Modular77l.m} for more details).
\end{proof}

\subsection{The hyperelliptic curves}

Assume $l \in L_7$ and let $(x,y,z)$ be a primitive integer solution to $x^7+y^7=z^l$ with $z \ne 0$.
Then according to Proposition~\ref{prop:Modular77l} we have
\begin{equation}\label{eqn:prenorm}
x+\zeta y= \epsilon \beta^l, \quad d (x+y)=z_1^l
\end{equation}
where $\beta \in \Z[\zeta]$ and 
\[(d,\epsilon) = 
\begin{cases}
(1,1) & \text{if } 7 \nmid z\\
(7, 1-\zeta) & \text{if } 7 | z.
\end{cases}\]
Let $\theta=\zeta+\zeta^{-1}$
and $K=\Q(\theta)$; this is the totally real cyclic cubic subfield of $L$. 
The Galois conjugates of $\theta$ are $\theta_1$, $\theta_2$,
$\theta_3$, which in terms of $\zeta$ are given by
\[\theta_1=\zeta+\zeta^{-1}, \qquad
\theta_2=\zeta^2+\zeta^{-2}, \qquad
\theta_3=\zeta^3+\zeta^{-3}.\]
Note that
\[\theta_1=\theta, \qquad
\theta_2=\theta^2-2, \qquad \theta_3=-\theta^2-\theta+1.\]
Let
\[\mu=\Norm_{L/K} (\epsilon), \qquad \gamma=\Norm_{L/K} (\beta).\]
Taking norms in \eqref{eqn:prenorm} down to $K$ we obtain
\begin{equation}\label{eqn:postnorm}
x^2+\theta x y+ y^2= \mu \gamma^l, \quad d (x+y)=z_1^l
\end{equation}
where $\gamma \in \mathcal{O}_K$ and 
\[(d, \mu) = 
\begin{cases}
(1,1) & \text{if } 7 \nmid z\\
(7, 2-\theta) & \text{if } 7 | z.
\end{cases}\]

Let $\mu_1=\mu$, $\mu_2$, $\mu_3$ denote the conjugates of $\mu$
that correspond respectively
to $\theta \mapsto \theta_j$, for $j=1,2,3$. Likewise
let $\gamma_1$, $\gamma_2$, $\gamma_3$ be the corresponding
conjugates of $\gamma$. Then
\[x^2+\theta_1 x y+ y^2= \mu_1 \gamma_1^l, \qquad
x^2+\theta_2 x y+ y^2= \mu_2 \gamma_2^l, \qquad
x^2+\theta_3 x y+ y^2= \mu_3 \gamma_3^l.\]
Furthermore, recall that
\[(x+y)^2=d^{-2} z_1^{2l}, \quad \text{where } d=
\begin{cases}
1 & \text{if } 7 \nmid z; \\
7 & \text{if } 7 | z.
\end{cases}\]
The left-hand sides of the previous four equations are symmetric binary quadratic forms over $K$. Since such forms obviously form a $2$-dimensional vector space over $K$ there exist linear relations between the four forms. We calculate
\begin{align*}
(x+y)^2+\theta_2 (x^2+\theta_1 x y+y^2)+\theta_3 (x^2+\theta_2 x y+ y^2)+\theta_1 (x^2+\theta_3 x y+ y^2) & = 0,\\
(x+y)^2+\theta_3 (x^2+\theta_1 x y+y^2)+\theta_1 (x^2+\theta_2 x y+ y^2)+\theta_2 (x^2+\theta_3 x y+ y^2) & = 0.
\end{align*}
This yields nice equations for a curve in projective $3$-space in the coordinates $z_1^2, \gamma_1, \gamma_2, \gamma_3$.
\begin{align*}
d^{-2} z_1^{2l}+\theta_2 \mu_1 \gamma_1^l+\theta_3 \mu_2 \gamma_2^l+\theta_1 \mu_3 \gamma_3^l & = 0,\\
d^{-2} z_1^{2l}+\theta_3 \mu_1 \gamma_1^l+\theta_1 \mu_2 \gamma_2^l+\theta_2 \mu_3 \gamma_3^l & = 0.
\end{align*}
We can eliminate one of the $\gamma_i$, say $\gamma_3$, to get
\begin{equation}\label{eqn:New77l}
(\theta_2-\theta_1) d^{-2} z_1^{2l}+(\theta_2^2-\theta_1 \theta_3) \mu_1 \gamma_1^l+(\theta_2 \theta_3-\theta_1^2) \mu_2 \gamma_2^l=0.
\end{equation}
And a projective plane curve in the coordinates $ \gamma_1, \gamma_2, \gamma_3$ is quickly obtained as
\begin{equation}\label{eqn:Samir77l}
(\theta_2-\theta_3) \mu_1\gamma_1^l+ (\theta_3-\theta_1) \mu_2 \gamma_2^l+(\theta_1-\theta_2) \mu_3 \gamma_3^l=0.
\end{equation}

\begin{rem}\label{rem:FermatQuotient}
Let $\alpha_1, \alpha_2, \alpha_3$ be nonzero elements in a field $F$ of characteristic $0$ and 
consider the nonsingular plane projective curve over $F$ determined by the equation
\begin{equation}\label{eqn:FermatGeneric}
\alpha_1 u^l+ \alpha_2 v^l+\alpha_3 w^l=0.
\end{equation}
Using the identity
\[(\alpha_1 u^l-\alpha_2 v^l)^2=(\alpha_1 u^l+\alpha_2 v^l)^2-4 \alpha_1 \alpha_2 (uv)^l,\]
we get from~\eqref{eqn:FermatGeneric} that
\[(\alpha_1 u^l-\alpha_2 v^l)^2=-4 \alpha_1 \alpha_2 (uv)^l+\alpha_3^2 w^{2l}.\]
By dividing both sides by $\alpha_3^2 w^{2l}$, we see that
\[\left(\frac{uv}{w^2},\frac{\alpha_1 u^l-\alpha_2 v^l}{\alpha_3 w^l}\right) \in C(F)\] 
where $C$ is the genus $(l-1)/2$ hyperelliptic curve determined by
\[C: Y^2= -4 \eta X^l+1, \quad \eta=\frac{\alpha_1 \alpha_2}{\alpha_3^2}.\]
Obviously, by permuting the indices, we find that $F$-rational points on~\eqref{eqn:FermatGeneric} also give rise to $F$-rational points on the hyperelliptic curves given by the equation above with $\eta=\alpha_2 \alpha_3/\alpha_1^2$ and $\eta=\alpha_3 \alpha_1/\alpha_2^2$ respectively.
\end{rem}

Define
\begin{gather*}
\alpha_1:=(\theta_2-\theta_1) d^{-2}, \quad
\alpha_2:=(\theta_2^2-\theta_1 \theta_3) \mu_1, \quad
\alpha_3:=(\theta_2 \theta_3-\theta_1^2) \mu_2;\\
\alpha_1' := (\theta_2-\theta_3) \mu_1,\qquad
\alpha_2' := (\theta_3-\theta_1) \mu_2,\qquad
\alpha_3' := (\theta_1-\theta_2) \mu_3;\\
\eta_1:=\alpha_2 \alpha_3/\alpha_1^2, \quad
\eta_2:=\alpha_3 \alpha_1/\alpha_2^2, \quad
\eta_3:=\alpha_1 \alpha_2/\alpha_3^2, \quad
\eta_4:=\alpha_1' \alpha_2'/\alpha_3'^2.
\end{gather*}
Then we see that Remark~\ref{rem:FermatQuotient} above leads to $K$-rational points on the curves 
\[Y^2=-4\eta_i X^l+1\]
for $i=1,2,3,4$ and the two possibilities for $(d,\mu)$. More precisely, if $7 \nmid z$, then
\begin{gather*}
\left(\frac{\gamma_1 \gamma_2}{z_1^4},\frac{\alpha_2 \gamma_1^l-\alpha_3 \gamma_2^l}{\alpha_1 z_1^{2l}} \right) \in C_{l,1}(K), \quad
\left(\frac{\gamma_2 z_1^2}{\gamma_1^2}, \frac{\alpha_3 \gamma_2^l-\alpha_1 z_1^{2l}}{\alpha_2 \gamma_1^l} \right) \in C_{l,2}(K), \\
\left(\frac{z_1^2 \gamma_1}{\gamma_2^2}, \frac{\alpha_1 z_1^{2l}-\alpha_2 \gamma_1^l}{\alpha_3 \gamma_2^l}  \right) \in C_{l,3}(K), \quad
\left(\frac{\gamma_1 \gamma_2}{\gamma_3^2}, \frac{\alpha_1' \gamma_1^l-\alpha_2' \gamma_2^l}{\alpha_3' \gamma_3^l} \right) \in C_{l,4}(K)
\end{gather*}
where $C_{l,i}$ denotes the genus $(l-1)/2$ hyperelliptic curve given by
\[C_{l,i}: Y^2=-4 \eta_i X^l+1, \quad (\mu,d)=(1,1), \quad i=1,2,3,4.\]
If $7 | z$, then similarly
\begin{gather*}
\left(\frac{\gamma_1 \gamma_2}{z_1^4},\frac{\alpha_2 \gamma_1^l-\alpha_3 \gamma_2^l}{\alpha_1 z_1^{2l}} \right) \in D_{l,1}(K), \quad
\left(\frac{\gamma_2 z_1^2}{\gamma_1^2}, \frac{\alpha_3 \gamma_2^l-\alpha_1 z_1^{2l}}{\alpha_2 \gamma_1^l} \right) \in D_{l,2}(K), \\
\left(\frac{z_1^2 \gamma_1}{\gamma_2^2}, \frac{\alpha_1 z_1^{2l}-\alpha_2 \gamma_1^l}{\alpha_3 \gamma_2^l}  \right) \in D_{l,3}(K), \quad
\left(\frac{\gamma_1 \gamma_2}{\gamma_3^2}, \frac{\alpha_1' \gamma_1^l-\alpha_2' \gamma_2^l}{\alpha_3' \gamma_3^l} \right) \in D_{l,4}(K)
\end{gather*}
where $D_{l,i}$ denotes the genus $(l-1)/2$ hyperelliptic curve given by
\[D_{l,i}: Y^2=-4 \eta_i X^l+1, \quad (\mu,d)=(2-\theta,7), \quad i=1,2,3,4.\]
The possible values of $\eta_i$ are given explicitly in Table~\ref{table:etaValuesC77l}.
Note that if $7|z$, then $\eta_2 = -\eta_3$, hence $D_{l,2} \simeq D_{l,3}$.

\begin{table}[h!]
\caption{Values of $\eta_i$}
\begin{scriptsize}
\begin{tabular}{c|c|c|c|c}
$(\mu,d)$ & $\eta_1$ & $\eta_2$ & $\eta_3$ & $\eta_4$  \\
\hline
$(1,1)$ & $2\theta^2 + \theta - 5$ & $-5\theta^2 + 4\theta + 3$ & $-\theta^2 - 3\theta - 2$ & $\theta^2 - 3$\\
$(2-\theta,7)$ & $7^4(20\theta^2 + 11\theta - 46)$ & $7^{-3}(-\theta^2 + 4\theta + 3)$ & $7^{-3}(\theta^2 - 4\theta - 3)$ & $20\theta^2 + 11\theta - 45$
\end{tabular}
\end{scriptsize}
\label{table:etaValuesC77l}
\end{table}

Next, we note that there must be a linear dependence between the symmetric binary quadratic forms $(x-y)^2$, $(x+y)^2$, and $x^2+\theta xy +y^2$. It is given by
\[(\theta-2)(x-y)^2=-4(x^2+\theta xy+y^2)+(\theta+2)(x+y)^2.\]
Using $(x+y)^2=d^{-2}z_1^{2l}$ and $x^2+\theta xy+y^2=\mu \gamma^l$, we get
\begin{equation}\label{eqn:SemiQ77l}
\left(\frac{x-y}{x+y}\right)^2=\frac{-4 \mu d^2}{\theta-2} \left(\frac{\gamma}{z_1^2}\right)^l+\frac{\theta+2}{\theta-2}.
\end{equation}
So if $7 \nmid z$, then
\begin{equation}\label{eqn:form}
\left(\frac{\gamma}{z_1^2},\frac{x-y}{x+y}\right) \in C_{l,0}(K)
\end{equation}
where $C_{l,0}$ denotes the genus $(l-1)/2)$ hyperelliptic curve given by
\[C_{l,0}: Y^2=7^{-1}(4\theta^2 + 12\theta + 16) X^l+7^{-1}(-4\theta^2 - 12\theta - 9).\]
If $7 | z$, then
\[\left(\frac{\gamma}{z_1^2},\frac{x-y}{x+y}\right) \in D_{l,0}(K)\]
where $D_{l,0}$ denotes the genus $(l-1)/2$ hyperelliptic curve given by
\[D_{l,0}: Y^2=14^2 X^l+7^{-1}(-4\theta^2 - 12\theta - 9).\]

Thus we have reduced our problem to determining the $K$-rational points on two genus $(l-1)/2$ curves. Namely one of the $C_{l,i}$ and one of the $D_{l,i}$. Note that
\begin{align}
\label{eqn:RatPoints77lCaseI}
C_{l,i}(K)  & \supset 
\begin{cases}
 \{\infty, (1,\pm 1) \} & \text{if } i=0\\
 \{\infty, (0,\pm 1), (1,\pm (2\theta^2 - 5)) \} & \text{if } i=1\\
 \{\infty, (0,\pm 1), (1,\pm (2\theta^2 - 2\theta - 1)) \} & \text{if } i=2\\
 \{\infty, (0,\pm 1), (1,\pm (2\theta + 3)) \} & \text{if } i=3\\
 \{\infty, (0,\pm 1), (1,\pm (2\theta^2 + 2\theta - 3)) \} & \text{if } i=4
 \end{cases}\\
 \label{eqn:RatPoints77lCaseII}
D_{l,i}(K) & \supset 
\begin{cases}
\{\infty \} & \text{if } i=0\\
\{\infty, (0,\pm 1) \} & \text{if } i=1,2,3\\
\{\infty, (0,\pm 1), (1,\pm (6\theta^2 + 4\theta - 13)) \} & \text{if } i=4.
\end{cases}
\end{align}

\begin{lem}\label{lem:ReductionToCurves77l}
Let $l \in L_7$. 
\begin{itemize}
\item If for at least one $i \in \{0,1,2,3,4\}$ equality holds in~\eqref{eqn:RatPoints77lCaseI}, then there are no non-trivial primitive integer solutions to $x^7+y^7=z^l$ with $7\nmid z$.
\item If for at least one $i \in \{0,1,2,3,4\}$ equality holds in~\eqref{eqn:RatPoints77lCaseII}, then there are no non-trivial primitive integer solutions to $x^7+y^7=z^l$ with $7 | z$.
\end{itemize}
\end{lem}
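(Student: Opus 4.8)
The plan is to prove the contrapositive in a form that exploits the
explicit constructions already set up. Suppose we have a non-trivial
primitive integer solution $(x,y,z)$ to $x^7+y^7=z^l$ with $z \ne 0$ and,
say, $7 \nmid z$ (the case $7 \mid z$ being entirely analogous, using the
$D_{l,i}$ in place of the $C_{l,i}$). By Proposition~\ref{prop:Modular77l}
the solution forces $r=s=0$ in~\eqref{eqn:zetI}, so the whole chain of
constructions leading to~\eqref{eqn:form} and to the points on each
$C_{l,i}$ applies. Thus each of the six displayed tuples genuinely lies in
the corresponding $C_{l,i}(K)$. The strategy is to show that a
\emph{non-trivial} solution always produces, on \emph{every} $C_{l,i}$, a
$K$-rational point \emph{outside} the finite list on the right-hand side
of~\eqref{eqn:RatPoints77lCaseI}. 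Consequently, if equality holds
in~\eqref{eqn:RatPoints77lCaseI} for even a single index $i$, we reach a
contradiction, which is exactly the assertion of the lemma.

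The heart of the argument is therefore a bookkeeping computation: for each
$i$, identify which $K$-rational point of $C_{l,i}$ the solution
$(x,y,z)$ yields, and verify that a non-trivial primitive solution cannot
land on any of the listed points. Concretely, I would work through the
four explicit tuples attached to $C_{l,1},\dots,C_{l,4}$ together with the
tuple~\eqref{eqn:form} for $C_{l,0}$. For $C_{l,0}$ the first coordinate is
$\gamma/z_1^2$ and the second is $(x-y)/(x+y)$; the listed points have
$X$-coordinate $0$, $1$, or are at infinity. A point at infinity forces
$z_1=0$, hence (via $x+y=z_1^l$) $x+y=0$, i.e.\ the trivial solution
$(x,y)=(t,-t)$ with $z=0$, contradicting $z \ne 0$. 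The point with
$X$-coordinate $0$ forces $\gamma=0$, hence $\beta=0$ and
$x+\zeta y=0$, impossible for nonzero integers. The points with
$X$-coordinate $1$ force $\gamma=z_1^2$ together with the prescribed
$Y$-value; tracing this back through~\eqref{eqn:SemiQ77l} and the norm
equation~\eqref{eqn:postnorm} pins down $x^2+\theta xy+y^2$ and $(x+y)^2$
up to units, and one checks that the only integer solutions are the
trivial ones with $xy=0$. The same three-way case analysis (infinity,
$X=0$, $X=1$) applies to each $C_{l,i}$, using the fact that the
$X$-coordinates involve $\gamma_1\gamma_2/z_1^4$, $\gamma_2z_1^2/\gamma_1^2$,
and so on; in each case $X=\infty$ or $X=0$ forces one of
$z_1,\gamma_1,\gamma_2$ to vanish, and $X=1$ forces an equality among these
quantities that, combined with the norm relations, yields $xy=0$ or
$x+y=0$.

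The main obstacle I expect is the \emph{correspondence} step rather than
any single inequality: one must be careful that the ``extra'' listed
points in~\eqref{eqn:RatPoints77lCaseI}, which arise from the known
trivial solutions $(x,y)=(\pm1,0),(0,\pm1),(\pm1,\mp1)$, are matched
correctly against the tuples our non-trivial solution produces. In other
words, the crux is to verify that the listed points are \emph{exactly}
the images of trivial solutions under the various maps $C,C_{l,1},\dots$,
so that landing on a listed point is equivalent to $xyz=0$. This is a
finite check, carried out by substituting each trivial $(x,y)$ into the
formulae for the tuples and confirming it reproduces the tabulated
coordinates (including the correct $Y$-values such as
$\pm(2\theta^2-5)$, $\pm(2\theta+3)$, etc.). Once this identification is
secured, the conclusion is immediate: equality in
~\eqref{eqn:RatPoints77lCaseI} for some $i$ means $C_{l,i}(K)$ contains
\emph{only} images of trivial solutions, so any solution with $z \ne 0$
and $xy \ne 0$ would produce a point not on the list, a contradiction.
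I would record the trivial-solution substitutions once and then note that
the non-degeneracy ($z_1 \ne 0$, $\gamma_j \ne 0$) for a non-trivial
primitive solution is exactly what excludes the boundary points, finishing
both bullet points of the lemma.
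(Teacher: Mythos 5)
Your skeleton is the same as the paper's: a non-trivial primitive solution produces a point $P=(X,Y)$ on every $C_{l,i}$ (resp.\ $D_{l,i}$), one has $P\neq\infty$ and $X\neq 0$ because $z_1,\gamma_1,\gamma_2,\gamma_3$ are nonzero, and everything reduces to excluding $X=1$. But precisely at that step your argument has a genuine gap. You claim that $X=1$ pins down $x^2+\theta xy+y^2$ and $(x+y)^2$ ``up to units'' and that ``one checks that the only integer solutions are the trivial ones.'' That check fails: the unit group of $\Z[\theta]$ has rank $2$, so ``unit'' is very far from ``$\pm 1$''. Concretely, $2+\theta=\Norm_{L/K}(1+\zeta)$ is a unit of $\Z[\theta]$ (its norm down to $\Q$ equals $1$), and $(x,y)=(1,1)$ gives $x^2+\theta xy+y^2=2+\theta$. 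So the equation ``$x^2+\theta xy+y^2=\text{unit}$'' does have integer solutions with $xy\neq 0$, and an argument that only controls the right-hand side up to an arbitrary unit cannot conclude.

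What is missing is exactly the content of the paper's proof: one must show $\gamma=\pm 1$ on the nose. Since $\gamma_1,\gamma_2,\gamma_3,z_1$ are pairwise coprime algebraic integers, $X=1$ (an identity such as $\gamma_1\gamma_2=z_1^4$, $\gamma_2 z_1^2=\gamma_1^2$, or $\gamma_1\gamma_2=\gamma_3^2$) does force all quantities involved to be units, with $z_1=\pm 1$ as $z_1\in\Z$; but then one still needs, for $i=1,4$, the norm relation $\gamma_1\gamma_2\gamma_3=\Norm_{K/\Q}(\gamma)=\pm 1$, and for $i=2,3$, the Galois action $\sigma(\gamma_1)=\gamma_2$, $\sigma(\gamma_2)=\gamma_3$, $\sigma(\gamma_3)=\gamma_1$ (so that, e.g., $\gamma_2=\gamma_1^2$ implies $\gamma_2^8=\gamma_2$, i.e.\ $\gamma_2^7=1$), combined with the fact that the only roots of unity in the totally real field $K$ are $\pm 1$, to conclude $\gamma=\pm 1$. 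Only then does one get $x^2+\theta xy+y^2=\pm\mu$ exactly, and comparison of coefficients in the $\Z$-basis $\{1,\theta,\theta^2\}$ yields $xy=0$ when $\mu=1$, resp.\ $(x,y)=(\pm 1,\mp 1)$ and hence $z=0$ when $\mu=2-\theta$ and $i=4$. A secondary point: the ``correspondence step'' you single out as the crux --- verifying that the listed points in~\eqref{eqn:RatPoints77lCaseI} and~\eqref{eqn:RatPoints77lCaseII} are exactly the images of trivial solutions --- is not needed for the lemma at all; what is needed is only the one-way implication that a non-trivial solution cannot land on any listed point, i.e.\ the statement $X\notin\{0,1,\infty\}$, whose proof is the unit argument you omitted.
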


\begin{proof}
Let $(x,y,z)$ be a non-trivial primitive integer solution to~\eqref{eqn:77l}. We have seen that this gives rise to a $P=(X,Y) \in  C_{l,i}(K)$ for all $i\in \{0,1,2,3,4\}$ if $7 \nmid z$ and it gives rise to a $P=(X,Y) \in D_{l,i}(K)$ for all $i \in \{0,1,2,3,4\}$ if $7 | z$. Obviously, $P\not=\infty$ and $X\not=0$. So the first part of the lemma (i.e.\ the $7 \nmid z$ case) follows if we prove that $X \not=1$, and the second part of the lemma (i.e. the $7 | z$ case) follows if we prove that $X\not=1$ if $i=4$. Let $\gamma=\gamma_1, \gamma_2, \gamma_3, z_1$ be as before. Note that they are nonzero pairwise coprime algebraic integers in $K=\Q[\theta]$ and of course $z_1 \in \Z$. Also note that the
roots of unity in $\Z[\theta]$ are $\pm 1$. For $i=0,1,2,3,4$ we have respectively
\[X=\frac{\gamma}{z_1^2}, \frac{\gamma_1 \gamma_2}{z_1^4}, \frac{\gamma_2 z_1^2}{\gamma_1^2}, \frac{z_1^2 \gamma_1}{\gamma_2^2}, \frac{\gamma_1 \gamma_2}{\gamma_3^2}.\]
Furthermore, recall that
\[x^2+\theta xy+ y^2= \mu \gamma^l\]
where $\mu=1$ if $7 \nmid z$, and $\mu=2-\theta$ if $7 | z$.

Let us assume that $7 \nmid z$. 
From the condition $X=1$ we now see that $z_1^2=1$ and that the $\gamma_i$ are units. If $i=0$, then we get $\gamma=1$. If $i=1$, then we get $1=\gamma_1 \gamma_2=\Norm(\gamma)/\gamma_3=\pm 1/\gamma_3$, hence $\gamma=\pm 1$. If $i=2$, then $\gamma_2=\gamma_1^2$, and from the Galois action we see that $\gamma_2^8=\gamma_2$, which implies $\gamma_2=1$ and hence $\gamma=1$. If $i=3$, then similar as in the previous case we get to $\gamma=1$. Finally, if $i=4$, then $1=\gamma_1 \gamma_2/\gamma_3^2=\Norm(\gamma)/\gamma_3^3=\pm 1/\gamma_3^3$, which implies $\gamma_3=\pm 1$ and hence $\gamma=\pm 1$. In all cases we see that $\gamma=\pm 1$, so
\[x^2+\theta xy+y^2= \pm 1.\]
Since $x,y, \in \Z$ we get $xy=0$. A contradiction which proves the first part of the lemma.

Now assume $7 |z$. We let $i=4$. The condition $X=1$ implies, as before, that $\gamma= \pm 1$. This gives us
\[x^2+\theta xy+y^2= \pm (2-\theta).\]
The integer solution are $(x,y)=(\pm 1, \mp 1)$, hence $z=0$. A contradiction which proves the second part of the lemma.
\end{proof}

\begin{rem}
We know of at least one instance where equality does not hold in ~\eqref{eqn:RatPoints77lCaseII}, namely
\begin{equation}\label{eqn:RatPointsD13_1}
D_{13,1}(K) \supset \{\infty, (0,\pm 1) , (7^{-1}(3\theta^2 + 2\theta - 2) , \pm (4\theta^2 + 6\theta + 1))\} .
\end{equation}
It is of course a simple matter to check that the pair of \lq new\rq\ points does not come from a non-trivial primitive integer solution to~\eqref{eqn:77l}, from which we conclude that equality in~\eqref{eqn:RatPointsD13_1} implies the nonexistence of non-trivial primitive integer solutions to~\eqref{eqn:77l} with $7 | z$ and $l=13$. Although it seems very likely that indeed this equality holds, proving it still remains quite a challenge.
\end{rem}

\begin{rem}\label{rem:ExtraInfo}
Instead of finding the full set $S$ of $K$-rational points on one of the $C_{l,i}$ or $D_{l,i}$ in order to apply Lemma~\ref{lem:ReductionToCurves77l}, it can be convenient to use extra (local) information so that the same conclusion can be obtained by finding a specific subset of $S$ satisfying extra (local) conditions. For example, let $\p$ be the prime above $7$, then for $j=1,2,3$ we have $x^2+\theta_j x y+y^2 \equiv (x+y)^2 \pmod{\p}$. So for a primitive integer solution to~\eqref{eqn:77l} with $7 \nmid z$ we get, using $\gamma_1, \gamma_2, \gamma_3, z_1$ as before, that $\gamma_1^l \equiv \gamma_2^l \equiv \gamma_3^l \equiv (z_1^2)^l \pmod{\p}$. Since $l\not=2,3$ and $7 \nmid z$ we obtain respectively
\begin{equation}\label{eqn:LocalInfoAt7}
\gamma_1 \equiv \gamma_2 \equiv \gamma_3 \equiv z_1^2 \pmod{\p}, \quad \gamma_1 \gamma_2 \gamma_3 z_1^2 \not\equiv 0 \pmod{\p}.
\end{equation}
We note that $C_{l,i}$ for $i=1,2,3,4$ has good reduction at $\p$. Now the local information~\eqref{eqn:LocalInfoAt7} implies that our solution gives rise to a point $\tilde{P}_i$ on the reduction $\tilde{C}_{l,i}/\F_7$ where
\[\tilde{P}_i= (1,3), (1,4), (1,0), (1,2)\]
for $i=1,2,3,4$ respectively. Therefore define for $i=1,2,3,4$
\[C_{l,i}(K)':=\{ P \in C_{l,i}(K) : P \pmod{\p}=\tilde{P}_i\}.\]
For the curve $C_{l,0}$ we see, by \eqref{eqn:form},
that any $P \in C_{l,0}(K)$ that comes from a solution to \eqref{eqn:77l}
has second coordinate in $\Q$, where by convention we say that $\infty$ has second coordinate in $\Q$. Therefore define
\[C_{l,0}(K)':=\{ P \in C_{l,0}(K) : P \text{\ has second coordinate in } \Q\}.\]
We arrive at the following refined version of the first part of Lemma~\ref{lem:ReductionToCurves77l}.

\begin{lem}\label{lem:ReductionToCurves77lBis}
Let $l \in L_7$. If for at least one $i \in \{0,1,2,3,4\}$ we have
\[C_{l,i}(K)' = 
\begin{cases}
\{\infty, (1,\pm 1)\} & \text{if } i=0 \\
\{(1,2\theta^2 - 5)\} & \text{if } i=1\\
\{(1,-2\theta^2 + 2\theta + 1)\} & \text{if } i=2 \\
\{(1,\pm (2\theta + 3))\} & \text{if } i=3 \\
\{(1,2\theta^2 + 2\theta - 3)\} & \text{if } i=4,
\end{cases}\]
then there are no non-trivial primitive integer solutions to $x^7+y^7=z^l$ with $7\nmid z$.
\end{lem}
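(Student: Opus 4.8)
The plan is to reuse, essentially verbatim, the key step of the proof of Lemma~\ref{lem:ReductionToCurves77l} (that $X=1$ is incompatible with a non-trivial solution), but to feed it through the local sieve of Remark~\ref{rem:ExtraInfo} so that the larger set of ``known'' points in~\eqref{eqn:RatPoints77lCaseI} is replaced by the smaller prescribed sets $C_{l,i}(K)'$, all of whose elements lie over $X=1$ (or over $\infty$ when $i=0$).

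First I would suppose, for contradiction, that a non-trivial primitive integer solution $(x,y,z)$ to $x^7+y^7=z^l$ with $7\nmid z$ exists, and fix an index $i\in\{0,1,2,3,4\}$ for which the hypothesised equality holds. Exactly as in Lemma~\ref{lem:ReductionToCurves77l}, this solution produces a point $P=(X,Y)\in C_{l,i}(K)$ with $P\ne\infty$ and $X\ne 0$. The new ingredient is Remark~\ref{rem:ExtraInfo}: the congruences $\gamma_1\equiv\gamma_2\equiv\gamma_3\equiv z_1^2\pmod{\p}$ (and, for $i=0$, the observation that the second coordinate $(x-y)/(x+y)$ of $P$ lies in $\Q$) show that $P$ actually lies in the refined set $C_{l,i}(K)'$. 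Concretely, for $i\in\{1,2,3,4\}$ the reduction of $P$ modulo $\p$ is the point $\tilde P_i$, and for $i=0$ the point $P$ has rational second coordinate.

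Next I would invoke the computation already carried out inside the proof of Lemma~\ref{lem:ReductionToCurves77l}: the equality $X=1$ forces $\gamma=\pm 1$, whence $x^2+\theta xy+y^2=\pm 1$ and therefore $xy=0$, contradicting non-triviality; hence $X\ne 1$. We thus have a point $P\in C_{l,i}(K)'$ with $P\ne\infty$ and $X\ne 1$. But by the assumed equality every element of $C_{l,i}(K)'$ either equals $\infty$ or has $X=1$ (this is clear from the prescribed sets: each listed affine point has first coordinate $1$, and only the $i=0$ set contains $\infty$). This contradicts the existence of $P$, so no such solution exists.

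Finally, the one routine verification needed is the compatibility of the reductions $\tilde P_i$ of Remark~\ref{rem:ExtraInfo} with the points listed in the statement. Using $\theta\equiv 2\pmod{\p}$ one checks that among the known points in~\eqref{eqn:RatPoints77lCaseI} the survivors of the local sieve are exactly those in the statement, since $2\theta^2-5\equiv 3$, $-2\theta^2+2\theta+1\equiv 4$, $2\theta+3\equiv 0$ and $2\theta^2+2\theta-3\equiv 2$ modulo $\p$, matching $\tilde P_1,\tilde P_2,\tilde P_3,\tilde P_4$ respectively, while the points $(0,\pm 1)$ and $\infty$ drop out. I do not expect a genuine obstacle here; the only point requiring care is the (harmless) fact that a true solution has $X\ne 1$ in $K$ yet reduces to a point with first coordinate $1$, so it lives in $C_{l,i}(K)'$ as a point genuinely distinct from the prescribed ones.
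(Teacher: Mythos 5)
Your proposal is correct and follows essentially the same route as the paper, which presents this lemma as an immediate consequence of Remark~\ref{rem:ExtraInfo} (the local sieve at the prime $\p$ above $7$ placing the solution's point in $C_{l,i}(K)'$) combined with the $X\ne 1$ argument from the proof of Lemma~\ref{lem:ReductionToCurves77l}. Your closing congruence check (using $\theta\equiv 2\pmod{\p}$) is not logically needed for the conditional statement, but it correctly confirms that the prescribed sets are exactly the survivors of the sieve among the known points.
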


Similar remarks apply to $D_{l,0}$ and $D_{l,4}$.
\end{rem}

\subsection{Rational points on $C_{l,i}$ and $D_{l,i}$}\label{sec:Chabauty775}

The curves $C_{l,i}$ for $i=0,\ldots, 4$ and $D_{l,4}$ contain a $K$-rational point $P=(X,Y)$ with $X=1$. We can check that $D:=[P-\infty]$ is a point of infinite order on the Jacobian. Upper bounds for the ranks of the Jacobians of the $C_{5,i}$ and the $D_{5,i}$ can be found in Tables~\ref{table:RankBoundsC775} and \ref{table:RankBoundsD775} respectively. We conclude that 
\begin{gather*}
 \rank \Jac(C_{5,1})(K)=\rank \Jac(C_{5,2})(K)=\rank \Jac(C_{5,3})(K)=1\\
 \rank \Jac(D_{5,4})(K)=1, \quad \rank \Jac(D_{5,2})(K)=0.
 \end{gather*}

\begin{table}[h!]
\caption{Rank bounds for the Jacobian of $C_{5,i}$}
\begin{tabular}{c|c|c}
$C$ & $\dim_{\F_2}\Sel^{(2)}(K,\Jac (C))$ & Time  \\
\hline
$C_{5,0}$ & 2 & 1545s \\
$C_{5,1}$ & 1 & 1667s \\
$C_{5,2}$ & 1 & 1700s \\
$C_{5,3}$ & 1 & 1928s \\
$C_{5,4}$ & 2 & 571s \\
\end{tabular}
\label{table:RankBoundsC775}
\end{table}

\begin{table}[h!]
\caption{Rank bounds for the Jacobian of $D_{5,i}$}
\begin{tabular}{c|c|c}
$D$ & $\dim_{\F_2}\Sel^{(2)}(K,\Jac (D))$ & Time  \\
\hline
$D_{5,0}$ & 1 & 79083s $\approx$ 22.0h\\
$D_{5,1}$ & 1 & 89039s $\approx$ 24.7h \\
$D_{5,2}(\simeq D_{5,3})$ & 0 & 102817s $\approx$ 28.6h \\
$D_{5,4}$ & 1 & 1838s \\
\end{tabular}
\label{table:RankBoundsD775}
\end{table}

We see that we are in a good position to solve~\eqref{eqn:77l} for $l=5$. For the case $7 \nmid z$ the candidates $C_{5,1}, C_{5,2}$, and $C_{5,3}$ seem equally promising at this point, we choose to work with $C_{5,3}$. For the case $7 |z$, the curves $D_{5,1}$ and $D_{5,2}$ are both good candidates, but obviously $D_{5,2}$ is the easier one to work with, since its Jacobian has rank zero.

\begin{prop}\label{prop:RatPoints775}
We have
\begin{align*}
C_{5,3}(K)' & = \{(1,\pm (2\theta + 3))\}, \\
D_{5,2}(K) & = \{ \infty, (0,\pm 1) \}.
\end{align*}
\end{prop}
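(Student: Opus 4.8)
The plan is to apply the Chabauty--Coleman method, exactly as sketched in Section~\ref{sec:chab}, separately to the two curves $C_{5,3}$ and $D_{5,2}$ over the cubic field $K=\Q(\theta)$. Both are genus $2$ hyperelliptic curves, and by the rank data collected above we have $\rank\Jac(C_{5,3})(K)=1$ and $\rank\Jac(D_{5,2})(K)=0$, so in both cases the Chabauty condition $r<g=2$ is satisfied. The curve $D_{5,2}$ is the cleaner of the two: since its Jacobian has rank $0$, the Mordell--Weil group is torsion, and one first shows $\Jac(D_{5,2})(K)_{\mathrm{tors}}=\{0\}$ by the standard device of reducing modulo two primes $\p_1,\p_2$ of good reduction (those above rational primes split appropriately in $K$) with $\gcd(\#\Jac(D_{5,2})(\F_{\p_1}),\#\Jac(D_{5,2})(\F_{\p_2}))=1$, forcing the injection $\Jac(D_{5,2})(K)_{\mathrm{tors}}\hookrightarrow\Jac(D_{5,2})(\F_{\p_i})$ to be trivial. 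Then $\Jac(D_{5,2})(K)=\{0\}$, and since the Abel--Jacobi map $P\mapsto[P-\infty]$ is injective, $D_{5,2}(K)$ can contain only points mapping to $0$; the known points $\infty$ and $(0,\pm1)$ exhaust these, giving $D_{5,2}(K)=\{\infty,(0,\pm1)\}$. (One should double-check this last step directly, since $[(0,1)-\infty]$ need not be $2$-torsion automatically; more safely, one determines $D_{5,2}(K)$ by a search combined with the rank-$0$ Chabauty argument below applied with $r=0$.)

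For $C_{5,3}$ the substantive work lies with the rank-$1$ Jacobian. Here I would fix a prime $\p$ of $K$ of good reduction, lying above a small rational prime $p$ with $e<p-1$ (the absolute ramification index condition from Section~\ref{sec:chab}; since $K$ is unramified at most small primes this is easily arranged, e.g.\ $p=3$ or another convenient split prime). The divisor $D=[(1,2\theta+3)-\infty]$ is a point of infinite order generating a finite-index subgroup of the rank-$1$ group $\Jac(C_{5,3})(K)$, so $\Ann(\Jac(C_{5,3})(K))$ is cut out inside the $2$-dimensional space $\Omega(C_{5,3}/K_\p)$ by the single linear condition $\int_0^D\omega=0$; this yields a positive-dimensional annihilator containing a non-zero differential $\omega$. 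One computes the two Coleman integrals $\int_0^D\frac{dX}{Y}$ and $\int_0^D X\frac{dX}{Y}$ to sufficient $\p$-adic precision (using the techniques referenced in \cite{We}), reads off their valuations, scales to obtain $\omega=a\,\frac{dX}{Y}+b\,X\frac{dX}{Y}$ reducing to a non-zero $\overline{\omega}$ on $\tilde{C}_{5,3}/\F_\p$, and then checks that $\overline{\omega}$ does not vanish at the relevant residue points. By the criterion of Section~\ref{sec:chab}, each residue class then contains at most one $K$-rational point.

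The final step is to match the known $K$-rational points against the residue classes of $\tilde{C}_{5,3}(\F_\p)$ and rule out any extra classes, invoking the extra local refinement of Remark~\ref{rem:ExtraInfo}: we need only the set $C_{5,3}(K)'$, i.e.\ points reducing to the prescribed residue $\tilde P_3=(1,0)$ modulo the prime above $7$. This local condition, together with the Chabauty bound of at most one rational point per residue class at $\p$, should leave exactly the two known points $(1,\pm(2\theta+3))$. I expect the main obstacle to be twofold: first, the accurate computation of the $\p$-adic Coleman integrals over the cubic field $K$ to adequate precision (the evaluation is delicate and precision loss must be controlled), and second, handling any residue class whose non-vanishing of $\overline{\omega}$ fails or into which no known point reduces. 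As in the proof of Proposition~\ref{prop:RatPoints55lCaseI}, such a stubborn class can be treated by exhibiting a torsion (or otherwise annihilating) divisor lifting the bad point, or by a Mordell--Weil sieve argument \cite{BS3}; these, together with the local condition at $7$, should close the gap. All numerical verification is carried out in {\tt MAGMA}.
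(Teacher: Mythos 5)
Your argument for $D_{5,2}(K)$ contains a genuine error. The Mordell--Weil group $\Jac(D_{5,2})(K)$ is \emph{not} trivial: it is cyclic of order $5$, generated by $[(0,1)-\infty]$. Your proposed ``standard device'' of finding two good primes $\p_1,\p_2$ with $\gcd(\#\Jac(D_{5,2})(\F_{\p_1}),\#\Jac(D_{5,2})(\F_{\p_2}))=1$ can never succeed, because that gcd is always divisible by the order of this nontrivial torsion point. Indeed your stated conclusion is self-contradictory: if $\Jac(D_{5,2})(K)=\{0\}$ and the Abel--Jacobi map $P\mapsto[P-\infty]$ is injective, then $D_{5,2}(K)$ would contain \emph{at most one} point, whereas it visibly contains the three points $\infty,(0,\pm 1)$. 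The paper's proof runs differently: reducing at the primes above $3$ and $11$ gives Jacobian orders $730$ and $1882705$, whose gcd is $5$; since the rank is $0$, this forces $\Jac(D_{5,2})(K)\cong\Z/5\Z$ generated by $[(0,1)-\infty]$, and one then checks that $n[(0,1)-\infty]$ for $n=2,3$ is not of the form $[P-\infty]$, so that injectivity of Abel--Jacobi yields exactly $D_{5,2}(K)=\{\infty,(0,\pm1)\}$. Your parenthetical hedge (about $2$-torsion) does not identify, let alone repair, this problem.

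For $C_{5,3}(K)'$ your plan misses the key difficulty. The two target points $P_\pm=(1,\pm(2\theta+3))$ reduce to the \emph{same} point $\tilde P=(1,0)$ modulo the prime $\p$ above $7$ --- it is a Weierstrass point of the reduction --- and $\p$ is the natural prime to use, since $C_{5,3}(K)'$ is by definition the set of $K$-points in that single residue disc. In that disc the criterion ``at most one rational point per residue class where $\overline{\omega}$ does not vanish'' is useless: the disc contains two rational points, so $\overline{\omega}$ necessarily vanishes at $\tilde P$. If instead you run Chabauty at a prime above $3$ as you suggest (note $3$ is inert in $K$, so the residue field is $\F_{27}$), you must then eliminate every residue class containing no known point; this is precisely the part you defer to a Mordell--Weil sieve, and it is the substance of the proof, not a finishing touch. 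The paper works directly at $\p\mid 7$: it parametrizes the disc by $T=Y+Y_0$ with $Y_0=2\theta+3$ (the function $X-1$ is not a uniformizer at the Weierstrass point $\tilde P$), computes the Coleman integrals $\int_{P_-}^{P_+}X^i\,dX/Y$ by formal integration of a power series in $T$, forms the annihilating differential from their ratio, and applies Strassmann's theorem: the bound is $3$, and the three zeros are $P_-$, $P_+$, and the Hensel lift of $\tilde P$ to a $\p$-adic Weierstrass point, which is not $K$-rational because the hyperelliptic polynomial $-4\eta_3X^5+1$ is irreducible over $K$. (Irreducibility also underlies the infinite-order claim for $[P_+-\infty]$: its reduction has order $2$ while $\#\Jac(C_{5,3})(K)_{\mathrm{tors}}$ is odd.) This single-disc Strassmann analysis is the idea your proposal lacks.
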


\begin{proof}
We will first determine $C_{5,3}(K)'$ and write for now $J:=\Jac(C_{5,3})$. Let $P_{\pm}:=(1,\pm(2\theta+3)) \in C_{5,3}(K)$ and $D:=[P_+-\infty] \in J(K)$. Then, as remarked before, $D$ has infinite order. Since we need this fact in the proof, we will supply details here. Using explicit computations in {\tt MAGMA} it is straightforward to check this, but it can actually easily be shown \lq by hand\rq\ as follows. Note that $C_{5,3}$ and hence $J$ have good reduction at the prime $\p$ above $7$, denote the reductions by $\tilde{C}_{5,3}$ and $\tilde{J}$ respectively. The points $P_{\pm}$ reduce to a single Weierstrass point $\tilde{P}=(1,0) \in \tilde{C}_{5,3}(\F_7)$. Thus the reduction $\tilde{D}$ of $D$ has order $2$ in $\tilde{J}(\F_7)$. Since the hyperelliptic polynomial $f:=-4\eta_3X^5+1$ in the defining equation for $C_{5,3}$ is irreducible, we get that $\#J(K)_{\text{tors}}$ is odd. This implies that any elements of $J(K)$ whose reduction modulo a prime of good reduction has even order cannot be torsion, in particular $D$ has infinite order. 

Now we will apply Chabauty-Coleman with the prime $\p$. A basis for $\Omega(C_{5,3}/K_{\p})$ is given by $X^i dX/Y$ with $i=0,1$.
We have explicitly $2D=[P_+-P_-]$, which also has infinite order of course. We note that the rational function $X-1$ does not reduce to a local uniformizer at $\tilde{P}$, but the function $T:=Y+Y_0$ does, where $Y_0:=2\theta+3$. We compute $2YdY=-20\eta_3 X^4 dX$, so
\[X^i\frac{dX}{Y}=X^i\frac{dY}{-10\eta_3 X^4}=\frac{dT}{-10\eta_3 X^{4-i}}.\]
Furthermore, (with the obvious choice for the $5$-th root) we have around $P_-$
\begin{gather*}
X^{-1}=\left(\frac{Y^2-1}{-4\eta_3}\right)^{-1/5}=\left(1+\frac{T^2-2Y_0T}{-4\eta_3}\right)^{-1/5}=\\
\quad 1+\frac{-\theta + 2}{10}T+\frac{13\theta^2 - 17\theta + 2}{100}T^2+\frac{287\theta^2 - 274\theta - 103}{1000}T^3+\ldots \in K[[T]].
\end{gather*}
Formal integration allows us to calculate to high-enough $\p$-adic precision
\[c_i:= \int_0^{2D} X^i\frac{dX}{Y}=\int_{P_-}^{P_+} X^i\frac{dX}{Y}=\int_0^{2Y_0} \frac{dT}{-10\eta_3 X^{4-i}}.\]
We note that $v_{\p}(c_0)=v_{\p}(c_1)=1$. Now $\omega:=(-c_1/c_0 +X)/dY \in \Ann(J(K))$ and the function 
\[f(T):=\int_0^{T} \frac{(-c_1/c_0 +X(T'))dT'}{-10\eta_3 X(T')^4}\]
vanishes for $T \in Y_0 \mathcal{O}_{K_{\p}}$ such that $(X(T),Y(T)) \in C_{5,3}(K)$, which have to reduce mod $\p$ to $\tilde{P}$.
The Strassmann bound for the power series in $t$ of $f(Y_0t)$ can be computed to be $3$. The zeroes $t=0$ and $t=2$ correspond to the points $P_-$ and $P_+$ respectively. The third solution occurs at $t=1$, which corresponds to the unique Hensel-lift of $\tilde{P}$ 
to a $\p$-adic Weierstrass point. This last point is not $K$-rational (since $f$ is irreducible over $K$), so we conclude that $C_{5,3}(K)' = \{P_{\pm}\}$. Further details can be found in our {\tt MAGMA} script {\tt Chabauty77l.m}.

Determining $D_{5,2}(K)$ is straightforward, since $J:=\Jac(D_{5,2})$ has rank $0$. The number of points on the reduction of $J$ at the prime above $p$ for $p=3, 11$ respectively can be calculated to equal $730$ and $1882705$ respectively. Their gcd equals $5$. Since $[(0,1)-\infty] \in J(K)$ is non-trivial, it must be a point of order $5$ generating $J(K)$. The Abel-Jacobi map 
\[D_{5,2}(K) \to J(K): \quad P \mapsto [P-\infty]\]
is injective. The points $n[(0,1)-\infty]$ for $n=2,3$ cannot be represented as $[P-\infty]$ for some $P \in D_{5,2}(K)$. This shows that
$D_{5,2}(K) = \{ \infty, (0,\pm 1) \}$.
\end{proof}

Obviously, the proposition above together with Lemmata~\ref{lem:ReductionToCurves77l} and~\ref{lem:ReductionToCurves77lBis} imply Theorem~\ref{thm:775}.

\begin{rem}
With a bit more work it is possible to determine $C_{5,3}(K)$ completely as well as $C_{5,1}(K), C_{5,2}(K)$, and $D_{5,4}(K)$. In an earlier version of this paper we only dealt with the curves $C_{5,4}$ and $D_{5,4}$, so we had to determine $C_{5,4}(K)$ as well. For this curve it is in fact possible to find another independent $K$-rational point on the Jacobian and use Chabauty over number fields \cite{Siksek} to determine $C_{5,4}(K)$ on this genus $2$ curve of rank $2$ over $K$.  
\end{rem}

\section{Results assuming GRH}\label{sec:GRH}

The purpose of this section is to prove Theorem~\ref{thm:GRH}.  There are however many other, unconditional, results in this section, which can be interesting in their own right. When a result is conditional on GRH, we shall clearly state so.
We shall start with the equation $x^7+y^7=z^l$, since the treatment is a direct continuation of the previous section. After this, the equation $x^5+y^5=z^l$ will be revisited. In the final section we shall briefly discuss the possibility of making the results unconditional.

\subsection{The equation $x^7+y^7=z^l$ for $l=11,13$}

As in the $l=5$ case, we can check that for $l \in \{11,13\}$ the $K$-rational points on $C_{l,i}$ for $i=0,1,2,3,4$ and $D_{l,4}$ give rise to a point of infinite order on their Jacobians. Assume GRH. Rank bounds for the Jacobians of the $C_{l,i}$ and the $D_{l,i}$ with $l \in \{11,13\}$ can be found in Tables~\ref{table:RankBoundsC77l} and~\ref{table:RankBoundsD77l} respectively. 
We want to stress again that because of the pseudo-random number generator involved in computing the ranks, the computation time also depends (really heavily this time) on the seed.
We conclude from the tables that
\[\rank \Jac(C_{11,3})(K)=1, \quad \rank \Jac(D_{11,4})(K)=1\]
and of course 
\[\rank \Jac(D_{11,0})(K)=0, \quad \rank \Jac(D_{13,2})(K)=0.\]

\begin{table}[h!]
\caption{GRH Rank bounds for the Jacobian of $C_{l,i}$}
\begin{tabular}{c|c|c}
$C$ & $\dim_{\F_2}\Sel^{(2)}(K,\Jac(C))$ & Time  \\
\hline
$C_{11,0}$ & 4 & 10481s $\approx$ 2.9h\\
$C_{11,1}$ & 3 & 4226s $\approx$ 1.2h\\
$C_{11,2}$ & 2 & 7207s $\approx$ 2.0h \\
$C_{11,3}$ & 1 & 3604s $\approx$ 1.0h\\
$C_{11,4}$ & 2 & 14816s $\approx$ 4.1h\\
\hline
$C_{13,0}$ & 2 & 10508s $\approx$ 2.9h\\
$C_{13,1}$ & 2 & 365096s $\approx$ 4.2 days\\
$C_{13,2}$ & 2 &  108629s $\approx$ 30.2h\\
$C_{13,3}$ & 4 & 107770s $\approx$ 29.9h\\
$C_{13,4}$ & 3 & 119062s $\approx$ 33.1h\\
\end{tabular}
\label{table:RankBoundsC77l}
\end{table}

\begin{table}[h!]
\caption{GRH Rank bounds for the Jacobian of $D_{l,i}$}
\begin{tabular}{c|c|c}
$D$ & $\dim_{\F_2}\Sel^{(2)}(K,\Jac(D))$ & Time  \\
\hline
$D_{11,0}$ & 0 & 6419s $\approx$ 1.8h\\
$D_{11,1}$ & 1 & 7550s $\approx$ 2.1h\\
$D_{11,2}(\simeq D_{11,3})$ & 2 & 12010s $\approx$ 3.3h\\
$D_{11,4}$ & 1 &  1800s $\approx$ 0.5h\\
\hline
$D_{13,0}$ & 2 & 469263s $\approx$ 5.4 days\\
$D_{13,1}$ & 3 & 91258s $\approx$ 25.3h\\
$D_{13,2}(\simeq D_{13,3})$ & 0 & 43182s $\approx$ 12.0h\\
$D_{13,4}$ & 3 &  10225s $\approx$ 2.8h\\
\end{tabular}
\label{table:RankBoundsD77l}
\end{table}

We see that we are in a good position to solve~\eqref{eqn:77l} for $l=11$, but that we have insufficient information to treat the $7 \nmid z$ case when $l=13$.

\begin{prop}
Assuming GRH, we have
\begin{align*}
C_{11,3}(K)' & = \{(1,\pm (2\theta + 3))\}, \\
D_{11,0}(K) & = \{ \infty\}.
\end{align*}
\end{prop}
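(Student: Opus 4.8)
The plan is to follow the proof of Proposition~\ref{prop:RatPoints775} essentially verbatim, treating the rank-$0$ curve $D_{11,0}$ exactly as $D_{5,2}$ was treated and the rank-$1$ curve $C_{11,3}$ exactly as $C_{5,3}$ was treated. Throughout, GRH enters only through the rank bounds recorded in Tables~\ref{table:RankBoundsC77l} and~\ref{table:RankBoundsD77l}, which give $\rank \Jac(C_{11,3})(K)=1$ and $\rank \Jac(D_{11,0})(K)=0$.

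I would dispose of $D_{11,0}$ first. Since $\rank \Jac(D_{11,0})(K)=0$, the group $J:=\Jac(D_{11,0})(K)$ is finite, equal to its torsion subgroup. As $J(K)_{\mathrm{tors}}$ injects into $\tilde{J}(\F_{\p})$ for every prime $\p$ of good reduction, I would compute $\#\tilde{J}(\F_{\p})$ for two or three such primes (above small rational primes $\ne 2,7$) and take the gcd to pin down $J(K)$. I expect this to force $J(K)=\{0\}$; since the Abel-Jacobi map $D_{11,0}\to J$, $P\mapsto [P-\infty]$, is injective, this gives $D_{11,0}(K)=\{\infty\}$ immediately. Should the gcd leave a nontrivial torsion factor, one checks, exactly as for $D_{5,2}$, that no nonzero class is represented by a difference $[P-\infty]$ with $P\in D_{11,0}(K)$.

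For $C_{11,3}$, set $P_\pm:=(1,\pm(2\theta+3))\in C_{11,3}(K)$ and $D:=[P_+-\infty]$. First I would verify that $D$ has infinite order by reduction at the prime $\p\mid 7$: both $P_\pm$ reduce to the single Weierstrass point $\tilde{P}=(1,0)\in\tilde{C}_{11,3}(\F_7)$, so $\tilde{D}$ has order $2$ in $\tilde{J}(\F_7)$; since the hyperelliptic polynomial $-4\eta_3 X^{11}+1$ is irreducible over $K$, the order $\#J(K)_{\mathrm{tors}}$ is odd, and hence $D$ cannot be torsion. Combined with $\rank\Jac(C_{11,3})(K)=1$, this shows $D$ generates a finite-index subgroup, so the Chabauty condition $r<g=5$ holds. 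I would then run Chabauty-Coleman with $\p$: the residue characteristic is $p=7$ and, as $7\mathcal{O}_K=\p^3$, the ramification index is $e=3<p-1=6$, so the method of Section~\ref{sec:chab} applies. Because $C_{11,3}(K)'$ consists only of the $K$-points reducing to $\tilde{P}=(1,0)$, it suffices to analyse this single residue disk. Using $T:=Y+(2\theta+3)$ as uniformizer, I would expand $X$ (equivalently $X^{-1}$) as a power series in $T$ over $K_{\p}$, compute the integrals $c_i:=\int_0^{2D}X^i\,\frac{dX}{Y}=\int_{P_-}^{P_+}X^i\,\frac{dX}{Y}$ for $i=0,\dots,4$ to sufficient $\p$-adic precision (recalling that $2D=[P_+-P_-]$), and choose an annihilating differential $\omega=\sum_i a_i X^i\,\frac{dX}{Y}$, i.e.\ one with $\sum_i a_i c_i=0$, that reduces to a nonzero $\overline{\omega}$. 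Formal integration yields a power series $f(T)$ whose zeros in $T\in(2\theta+3)\mathcal{O}_{K_{\p}}$ contain all $K$-rational points of the disk; I would bound its zeros by Strassmann and match them against the known points $P_-$, $P_+$, and the unique (non-$K$-rational) Hensel lift of $\tilde{P}$ to a $\p$-adic Weierstrass point, thereby forcing $C_{11,3}(K)'=\{P_\pm\}$.

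The main obstacle is the genus-$5$ Chabauty-Coleman step for $C_{11,3}$: evaluating the five $\p$-adic integrals $c_i$ to adequate precision and, above all, selecting an annihilating differential in the $4$-dimensional space $\Ann(J(K))$ whose associated power series $f(T)$ has a Strassmann bound small enough to be saturated exactly by the three known $\p$-adic zeros in the Weierstrass residue disk. The auxiliary check that $-4\eta_3 X^{11}+1$ is irreducible over $K$ (needed for the odd-torsion argument) and the precision bookkeeping in the expansion of $X(T)$ are the remaining delicate points. All of this is heavier than in the genus-$2$ case but structurally identical, and the computations would be carried out in the {\tt MAGMA} script {\tt Chabauty77l.m}.
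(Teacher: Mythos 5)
Your proposal is correct and follows essentially the same route as the paper: for $C_{11,3}$ the paper simply runs the Chabauty--Coleman argument of Proposition~\ref{prop:RatPoints775} (the $C_{5,3}$ case) again at the prime above $7$, deferring the computations to the script {\tt Chabauty77l.m}, exactly as you describe. For $D_{11,0}$ the paper likewise combines the GRH rank-$0$ bound with triviality of $\Jac(D_{11,0})(K)_{\mathrm{tors}}$ (oddness of the torsion order from the irreducible defining polynomial, plus point counts at the prime above $5$ and a prime above $13$) and the injectivity of the Abel--Jacobi map, which is your argument with the gcd step made explicit.
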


\begin{proof}
The proof that $C_{11,3}(K)' = \{(1,\pm (2\theta + 3))\}$ is analogous to our proof that $C_{5,3}(K)' = \{(1,\pm (2\theta + 3))\}$ given in Proposition~\ref{prop:RatPoints775}. Details can be found in our {\tt MAGMA} script {\tt Chabauty77l.m}.

Since $\rank \Jac(D_{11,0})(K)=0$ we can get $D_{11,0}(K) = \{ \infty\}$ from the fact that $\Jac(D_{11,0})(K)_{\text{tors}}$ is trivial. This last statement follows from observing that the defining equation for $D_{11,0}$ shows that $\#\Jac(D_{11,0})(K)_{\text{tors}}$ is odd and counting points on the reduction of $\Jac(D_{11,0})$ modulo the prime above $5$ and a prime above $13$.
\end{proof}

\begin{cor}
Assuming GRH, there are no non-trivial primitive integer solutions to~\eqref{eqn:77l} for $l=11$.
\end{cor}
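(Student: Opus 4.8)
The plan is to assemble the immediately preceding proposition with the reduction results of the previous subsection, namely Lemma~\ref{lem:ReductionToCurves77l} and its refinement Lemma~\ref{lem:ReductionToCurves77lBis}, once for each of the two arithmetic cases $7 \nmid z$ and $7 \mid z$. First I would observe that $l=11$ lies in $L_7$ (it is prime, distinct from $2,3,7$, and below $100$), so all of the constructions reducing solutions of~\eqref{eqn:77l} to $K$-rational points on the curves $C_{l,i}$ and $D_{l,i}$ apply verbatim. A non-trivial primitive solution $(x,y,z)$ satisfies $xyz \neq 0$; in particular $z \neq 0$, so such a solution lies in exactly one of the two cases according to whether $7$ divides $z$.

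For the case $7 \nmid z$, I would apply Lemma~\ref{lem:ReductionToCurves77lBis} with $i=3$. The $i=3$ branch of that lemma requires $C_{11,3}(K)' = \{(1,\pm (2\theta + 3))\}$, which is precisely the first equality furnished (under GRH) by the preceding proposition; hence there are no non-trivial primitive solutions with $7 \nmid z$. For the case $7 \mid z$, I would invoke the second bullet of Lemma~\ref{lem:ReductionToCurves77l} with $i=0$: the proposition gives $D_{11,0}(K) = \{\infty\}$, so equality holds in~\eqref{eqn:RatPoints77lCaseII} for $i=0$, and the lemma rules out non-trivial primitive solutions with $7 \mid z$. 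Since the two cases exhaust all possibilities for $z \neq 0$, the corollary follows. Note that the refined Lemma~\ref{lem:ReductionToCurves77lBis} is the appropriate tool for the first case, since the proposition controls only the locally restricted subset $C_{11,3}(K)'$ rather than the full $C_{11,3}(K)$.

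The point I would stress is that the genuine difficulty has already been discharged in the preceding proposition, not in the corollary itself. Producing the two equalities $C_{11,3}(K)' = \{(1,\pm (2\theta + 3))\}$ and $D_{11,0}(K)=\{\infty\}$ rests on the GRH-conditional $2$-Selmer rank bounds recorded in Tables~\ref{table:RankBoundsC77l} and~\ref{table:RankBoundsD77l}, which pin down $\rank \Jac(C_{11,3})(K)=1$ and $\rank \Jac(D_{11,0})(K)=0$, followed by the Chabauty-Coleman computation at the prime above $7$ for $C_{11,3}$ and a rank-zero torsion argument for $D_{11,0}$. Granting that proposition, the corollary is a purely formal bookkeeping step combining the two reduction lemmata, with no further obstacle.
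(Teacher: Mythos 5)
Your proof is correct and is exactly the argument the paper intends: the corollary is left implicit there, following immediately from the preceding proposition combined with Lemma~\ref{lem:ReductionToCurves77lBis} (case $i=3$, for $7\nmid z$) and the second part of Lemma~\ref{lem:ReductionToCurves77l} (case $i=0$, for $7\mid z$). Your choice of the refined lemma for the first case, since the proposition only controls $C_{11,3}(K)'$ rather than $C_{11,3}(K)$, is precisely the right bookkeeping.
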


\subsection{The equation $x^5+y^5=z^l$ revisited}

Instead of just working over $\Q$, like we did in Section~\ref{sec:55l}, we shall use the factorization of $H_p$ over $\Q(\zeta_p)$ and $\Q(\zeta_p+\zeta_p^{-1})$, like we did in Section~\ref{sec:77l}, but now with $p=5$ instead of $p=7$ of course.

\subsubsection{Initial factorizations for $x^5+y^5=z^l$}\label{sec:InitialFactorization55l}

Let $(x,y,z)$ be a primitive integer solution to \eqref{eqn:55l} with $5 \nmid z$ for some prime $l>5$. Recall that
\[H_5(x,y)=\frac{x^5+y^5}{x+y}=
x^4 - x^3 y + x^2 y^2 - x y^3 + y^4.\]
By Lemma~\ref{lem:gcd}, $\gcd(x+y,H_5(x,y))=1$, and consequently
\[x+y=z_1^l, \qquad H_5(x,y)= z_2^l, \qquad z=z_1 z_2\]
where $z_1$, $z_2$ are non-zero, coprime integers. 

Let $\zeta$ denote a primitive $5$-th root of unity, $L=\Q(\zeta)$ and $\OO=\Z[\zeta]$
the ring of integers of $L$.
The class number of $\OO$
is $1$ and the unit rank is $1$. The unit group is in fact
\[
\{ \pm \zeta^i (1+\zeta)^r  : 0 \leq i \leq 4, \quad r \in \Z\}.
\]
Moreover, $5$ ramifies as $5\OO=(1-\zeta)^4 \OO$.
Now $H_5(x,y)=\Norm(x+\zeta y)$. We have
\begin{equation}\label{eqn:ZetaFactor55l}
x+\zeta y = (1+\zeta)^r  \beta^l, \qquad 0 \leq r \leq l-1, 
\end{equation}
for some $\beta \in \Z[\zeta]$.
Thus we have $l$ cases to consider. Using a modular approach, we can reduce the number of cases to just $1$ for many values of $l$, e.g. $l=11,13,17$.

\subsubsection{A modular Approach to $x^5+y^5=z^l$ when $5 \nmid z$}\label{sec:Modular55lBis}

Consider the set
\[L_5:=\{\text{primes } l : 5< l<100\}.\]

\begin{prop}\label{prop:Modular55lBis}
Let $(x,y,z)$ be a primitive integer solution to \eqref{eqn:55l} with $5 \nmid z$ and $l \in L_5$. 
Then \eqref{eqn:ZetaFactor55l} holds with $r=0$.
\end{prop}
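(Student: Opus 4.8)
The plan is to follow the template of Proposition~\ref{prop:Modular77l} essentially verbatim: attach a Frey curve to the solution, apply modularity and level-lowering, pin down the isomorphism class of the residual mod-$l$ representation, and then run an ``exponent sieve'' over the single unknown exponent $r$. The one genuine simplification relative to the seventh-power case is that the unit group of $\Z[\zeta]$, with $\zeta$ a primitive fifth root of unity, has rank $1$; thus only a single exponent $r$ appears in~\eqref{eqn:ZetaFactor55l} (rather than a pair $(r,s)$), and the factor $(1-\zeta)$ never occurs since we are assuming $5\nmid z$.

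First I would take the Frey curve to be Kraus' curve $E'_{x,y}: Y^2 = X^3 + 5(x^2+y^2)X^2 + 5H_5(x,y)X$ itself, rather than its $\Q(\sqrt{-5})$-twist used in Section~\ref{sec:Modular55l}, because that twist is only integral when $5\mid H_5$. Using identity~\eqref{eqn:identity} one gets $\Delta = 2^4 \cdot 5^3 (x+y)^4 H_5^2$ and $c_4 = 2^4 \cdot 5(H_5 + (x+y)^4)$; since $5\nmid z$ we have $5 \mid c_4$ and $\ord_5(\Delta)=3$, so $E'_{x,y}$ has additive reduction at $5$. A short run of Tate's algorithm, entirely analogous to Lemma~\ref{lem:DiscAndN55l}, then yields the conductor $N = 2^\alpha \cdot 5^2 \cdot \Rad_{\{2,5\}}(z)$ with $\alpha$ ranging over finitely many values, and records which primes divide $\Delta_{\mathrm{min}}$ to the $l$-th power.

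Next, irreducibility of $\rho_l^{x,y}$ for all $l \in L_5$ follows exactly as in Lemma~\ref{lem:Irreducibility55l}: $E'_{x,y}$ carries the rational $2$-isogeny through $(0,0)$, so a reducible $\rho_l^{x,y}$ would give a noncuspidal $\Q$-point on $X_0(2l)$, which is impossible for $l \geq 11$ and which for $l=7$ would force one of two explicit $j$-invariants that the formula for $j(E'_{x,y})$ never attains (the $j$-invariant is a twist-invariant, so the computation is identical). Modularity and level-lowering then show $\rho_l^{x,y}$ arises from a newform of level $2^\alpha \cdot 5^2$. I would compute these newforms in {\tt MAGMA} and, using trace comparisons precisely as in Lemma~\ref{lem:Traces55l} and the proof of Lemma~\ref{lem:ResultAfterCongruences77l}, eliminate every newform except the curve $E_0$ obtained by specializing $E'_{x,y}$ at a trivial solution with $xy=0$ (specialization at $z=0$ gives a singular curve and contributes nothing).

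Finally, with $E_0$ fixed I would build an exponent sieve in the spirit of Lemma~\ref{lem:ExpSieve77l}, now with only the single unit exponent to eliminate. For a prime $p \neq 2,5$ and each candidate $r \in \{0,\dots,l-1\}$, the congruence $(x,y)\equiv(a,b)\pmod l$ forced by $a_p(E_{x,y})\equiv a_p(E_0)\pmod l$, combined with~\eqref{eqn:ZetaFactor55l}, requires $\pi_i\bigl((a+b\zeta)/(1+\zeta)^r\bigr)$ to be an $l$-th power in each residue field $\Z[\zeta]/\p_i$ above $p$; collecting the surviving $r$ defines a set $\cB(E_0,p)$. I would then check computationally that for every $l \in L_5$ there is a prime $p$ with $\cB(E_0,p)=\{0\}$, giving $r=0$ and proving the proposition. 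The main obstacle is exactly the one that fixes the range $L_5$: for each $l$ one must find a prime $p\equiv 1\pmod l$ that is small enough for the point counts and $l$-th-power tests to be feasible yet sharp enough to cut the admissible exponents down to $r=0$. As in the seventh-power case this is a finite but genuinely computational search, with no a priori guarantee that a single $p$ suffices, so one may have to intersect the sets $\cB(E_0,p)$ over several primes $p$.
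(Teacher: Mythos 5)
Your overall architecture (Frey curve, irreducibility via the twist-invariant $j$-argument, modularity and level-lowering to a level of the form $2^\alpha 5^2$, then an exponent sieve over the single unit exponent $r$) is the paper's architecture, and your invariant computations for Kraus' curve $E'_{x,y}$ are correct; the choice between $E'_{x,y}$ and the integral twisted model $Y^2=X^3-5(x^2+y^2)X^2+5H_5(x,y)X$ that the paper actually uses is immaterial. But there is a genuine gap in your newform-elimination step. You claim that trace comparisons as in Lemma~\ref{lem:Traces55l} eliminate every newform at these levels except the curve $E_0$ obtained by specializing at $xy=0$, and you dismiss the other trivial specialization because ``specialization at $z=0$ gives a singular curve.'' The specialization you have overlooked is $(x,y)=(\pm 1,\pm 1)$: it is not a solution of the equation and not singular ($x+y=\pm 2$, $H_5(x,y)=1$), and it yields a second genuine elliptic curve (the paper's $E_{400d2}$, in its normalization). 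This newform can \emph{never} be eliminated by the simple trace congruences: for every prime $p\neq 2,5$ the pair $(a,b)=(1,1)$ lies in the admissible set $A_p$ (since $H_5(1,1)=1\neq 0$ and $1^5+1^5\neq 0$ in $\F_p$ for $p \ne 2$), so the congruence $a_p(f)\equiv t\pmod{l}$ for some $t\in T_p$ is automatically satisfied when $f$ is the newform of this curve, for every $p$ and every $l$. Your argument therefore establishes $r=0$ only under the unproved assumption that $\rho_l^{x,y}$ arises from $E_0$, and says nothing if it arises from this second surviving newform.

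Closing this gap is exactly where the paper spends its effort: after the trace comparisons (its Lemma~\ref{lem:ResultAfterCongruences55lBis}, which leaves \emph{two} curves, $E_{200b}$ and $E_{400d}$), it eliminates $E_{400d}$ by Kraus' method --- imposing that $a+b$ and $H_5(a,b)$ be $l$-th powers modulo a prime $p\equiv 1\pmod{l}$ --- which works for all $l\in L_5$ except $l=7,11,19$; those three exponents require the full exponent sieve (the analogue of Lemma~\ref{lem:ExpSieve77l}) applied to $E_{400d}$, showing that the set of admissible exponents $r$ is empty. Only then does the sieve you describe, applied to $E_{200b}$, finish the proof by cutting the admissible set down to $\{0\}$ (it cannot do better, since the trivial solutions $(\pm 1,0,\pm 1)$ and $(0,\pm 1,\pm 1)$ genuinely realize $r=0$). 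So your final computational step is the right one, but it must be run against \emph{both} surviving newforms, with the desired outcome for the phantom curve from $(\pm 1,\pm 1)$ being the empty set rather than $\{0\}$.
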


The proof is very much analogous to the proof of Proposition~\ref{prop:Modular77l} in Section \ref{sec:Modular77l}. So we just describe the main steps. We use the Frey curve 
\[E_{x,y}:  Y^2= X^3-5(x^2+y^2) X^2 + 5 H_5(x,y) X.\]
Write $\rho_l^{x,y}$ for the Galois representation on the $l$-torsion of $E_{x,y}$. Since $E_{x,y}$ is a quadratic twist of the the Frey curve from Section~\ref{sec:Modular55l} (which is also denoted as $E_{x,y}$ there), the irreducibility of $\rho_l^{x,y}$ for primes $l\geq 7$ follows directly from Lemma~\ref{lem:Irreducibility55l}. Now a straightforward computation of the conductor and minimal discriminant of $E_{x,y}$ and applying modularity \cite{BCDT01} and level lowering \cite{Ribet90}, \cite{Ribet94} as usual, yields the following lemma.

\begin{lem}\label{lem:LevelLowering55lBis}
For a prime $l \geq 7$, the Galois representation $\rho_l^{x,y}$ arises from a newform $f$ of level $N=2^\alpha 5^2$ where $\alpha=1$, $3$, or $4$.
\end{lem}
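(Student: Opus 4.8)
The plan is to follow the template of Lemmata~\ref{lem:DiscAndN55l}--\ref{lem:LevelLowering55l} almost verbatim; the only structural change is that, with $5\nmid z$, the Frey curve now has \emph{additive} reduction at $5$ instead of multiplicative, which is exactly why the final level carries $5^2$ rather than $5$.

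First I would record the invariants of the present $E_{x,y}$. A direct computation---or, as a shortcut, twisting the curve of Section~\ref{sec:Modular55l} by $5$ and using the identity~\eqref{eqn:identity}---gives
\[
c_4 = 2^4\cdot 5\,(2x^4+3x^3y+7x^2y^2+3xy^3+2y^4),\qquad
\Delta = 2^4\cdot 5^3 (x+y)^4 H_5^2 = 2^4\cdot 5^3 (z_1^2 z_2)^{2l},
\]
while $j=c_4^3/\Delta$ is unchanged by the twist.

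Next I would prove the analogue of Lemma~\ref{lem:DiscAndN55l}. As there, the resultant of $x^5+y^5$ and $2x^4+3x^3y+7x^2y^2+3xy^3+2y^4$ is $5^5$, so any prime $p\neq 2,5$ dividing $z$ divides $\Delta$ but not $c_4$ and is therefore multiplicative, and the model is minimal away from $2$. The genuinely new point is the behaviour at $5$: since $5\nmid z$ we have $5\nmid(x+y)$ and $5\nmid H_5$, so $\ord_5(\Delta)=3<12$ (the model is minimal at $5$) while $\ord_5(c_4)\geq 1$, which forces additive reduction and hence $\ord_5(N)=2$. At $2$, twisting by the $2$-adic unit $5$ leaves the $2$-adic valuations of $c_4,c_6,\Delta$ unchanged, so they coincide with those in Lemma~\ref{lem:DiscAndN55l} and Tate's algorithm returns the same conductor exponent: $\alpha=1$ with multiplicative reduction (after a non-minimal model at $2$, giving $\Delta_{\min}=\Delta/2^{12}$) when $2\mid z$, and $\alpha\in\{3,4\}$ with additive reduction ($\Delta_{\min}=\Delta$) when $2\nmid z$. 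Altogether $N=2^\alpha 5^2\,\Rad_{\{2,5\}}(z)$ with $\alpha\in\{1,3,4\}$.

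Finally, irreducibility of $\rho_l^{x,y}$ for $l\geq 7$ has already been reduced to Lemma~\ref{lem:Irreducibility55l} via the twist, so modularity~\cite{BCDT01} and Ribet's level lowering~\cite{Ribet90},~\cite{Ribet94} apply and strip off precisely the primes $p||N$ with $l\mid\ord_p(\Delta_{\min})$. For $p\neq 2,5$ one has $\ord_p(\Delta_{\min})=2l\,\ord_p(z_1^2 z_2)$, divisible by $l$, so every such $p$ is removed; the additive primes $5$ (and $2$ when $2\nmid z$) are untouched, while the multiplicative prime $2$ (when $2\mid z$) survives because $\ord_2(\Delta_{\min})\equiv -8\pmod{l}$ is nonzero for $l\geq 7$. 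This leaves the level $2^\alpha 5^2$ with $\alpha\in\{1,3,4\}$, as claimed. I expect the only fiddly step to be the $2$-adic run of Tate's algorithm; this can be sidestepped entirely by observing that $\Q_2(\sqrt 5)$ is unramified, so twisting by $5$ preserves the conductor exponent at $2$ and the value of $\alpha$ may simply be carried over from Lemma~\ref{lem:DiscAndN55l}.
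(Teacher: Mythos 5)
Your proposal is correct and takes essentially the same route as the paper: the paper also observes that this Frey curve is a quadratic twist of the one from Section~\ref{sec:Modular55l}, deduces irreducibility for $l\geq 7$ directly from Lemma~\ref{lem:Irreducibility55l}, and then cites a ``straightforward computation of the conductor and minimal discriminant'' followed by modularity~\cite{BCDT01} and level lowering~\cite{Ribet90},~\cite{Ribet94}. Your write-up merely fills in the details the paper leaves implicit (the additive reduction at $5$ giving $5^2$ in the level, the behaviour at $2$, and the divisibility $l \mid \ord_p(\Delta_{\mathrm{min}})$ at the removed primes), and your shortcut via the unramified twist $\Q_2(\sqrt 5)/\Q_2$ is a clean, rigorous way to recycle the conductor exponent at $2$ from Lemma~\ref{lem:DiscAndN55l}.
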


There are respectively $2$, $5$, and $8$ newforms at these levels, which all happen to be rational.
Specializing $E_{x,y}$ at a trivial primitive integer solution with $xy=0$ (i.e. $(x,y)=(\pm 1, 0)$ or $(0,\pm 1)$ ), yields $E_{200b1}$,  and specializing at $(x,y)=(\pm 1, \pm 1)$ (which does not correspond to a solution) yields $E_{400d2}$. Note that in the latter case we have $H_5(x,y)=1$. 
By comparing traces of Frobenius as usual (including the method of Kraus for some small values of $l$), we can eliminate all but two of the $15$ newforms for all primes $l \geq 7$. The two exceptions being of course the two 
newforms corresponding to the two elliptic curves we just obtained by specialization of $E_{x,y}$. We note that in the case $p|z$ it is convenient to strengthen the congruence $a_p(E_0) \equiv \pm (1+p) \pmod{l}$ to the congruence $a_p(E_0) \equiv a_p(E_{x,y})(1+p) \pmod{l}$.
\begin{lem}\label{lem:ResultAfterCongruences55lBis}
For a prime $l \geq 7$, the Galois representation $\rho_l^{x,y}$ arises from either $E_{200b}$ or $E_{400d}$.
\end{lem}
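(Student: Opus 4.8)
The strategy mirrors the proof of the analogous Lemma~\ref{lem:ResultAfterCongruences77l} in the $(7,7,l)$ case. By Lemma~\ref{lem:LevelLowering55lBis}, $\rho_l^{x,y}$ arises from one of the $2+5+8=15$ (rational) newforms at levels $2^\alpha 5^2$ with $\alpha \in \{1,3,4\}$, so it suffices to eliminate the $13$ newforms other than those corresponding to $E_{200b}$ and $E_{400d}$. The plan is to set up, for each prime $p \neq 2,5$, the trace-of-Frobenius test sets exactly as before. Define
\[
A_p := \{p+1-\#E_{a,b}(\F_p) : a,b \in \F_p,\ H_5(a,b) \neq 0\},
\qquad
T_p := A_p \cup \{\pm(1+p)\},
\]
where the extra elements $\pm(1+p)$ account for the case $p \mid z_2$ (via the congruence $a_p(E_0) \equiv \pm(1+p) \pmod{l}$ coming from multiplicative reduction at $p$). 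The point is that if $\rho_l^{x,y} \simeq \rho_l^{E_0}$, then a direct computation of $a_p(E_{x,y})$ as $(a,b)=(x,y)$ ranges over $\F_p^2$ shows
\[
a_p(E_0) \equiv t \pmod{l} \quad \text{for some } t \in T_p.
\]
For each of the $13$ unwanted newforms $f = E_0$, I would search for a single small prime $p$ (a few primes $p \leq 23$ should suffice, exactly as in the $(7,7,l)$ case) for which $a_p(E_0)$ is congruent modulo $l$ to no element of $T_p$ for any prime $l \geq 7$. Concretely, one checks that $a_p(E_0) - t$ is a nonzero integer whose only possible prime factors $\geq 7$ are ruled out by trying enough auxiliary $p$; a single well-chosen $p$ typically forces the difference to be supported on $\{2,3,5\}$, eliminating $E_0$ for all $l \geq 7$ at once.

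Since all $15$ newforms here happen to be rational, I expect this to be cleaner than the $(7,7,l)$ situation: there are no non-rational newforms requiring a $\Norm_{K/\Q}$ argument, so every congruence is a genuine integer congruence modulo $l$, and the elimination is purely a finite check over small $p$. As flagged in the statement, I would also use the refined congruence $a_p(E_0) \equiv a_p(E_{x,y})(1+p) \pmod{l}$ in the $p \mid z$ subcase; this sharpens the test set (replacing the crude $\pm(1+p)$ by products $(1+p)\cdot A_p$) and is occasionally needed when the plain $T_p$ does not separate a newform from $E_{200b}$ or $E_{400d}$ for small $l$. For any residual small exponents $l$ not killed by the basic congruences, the method of Kraus (choosing an auxiliary prime $p \equiv 1 \pmod l$ and restricting to $l$-th power conditions on $x+y$ and $H_5(x,y)$, exactly as in the proof of Proposition~\ref{prop:SecondCase55l}) will finish them off.

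The main obstacle is not conceptual but computational bookkeeping: one must verify that a uniform-in-$l$ elimination succeeds for all $13$ newforms using only finitely many small primes $p$, and handle the finitely many exceptional small $l$ (where a fixed $p$ may accidentally give a valid congruence) by the Kraus refinement. In practice this is all delegated to the {\tt MAGMA} script {\tt Modular55l.m}; I would simply record which prime $p$ eliminates each newform and note explicitly which small exponents required the Kraus step. No new geometric input is needed — the entire argument is a transcription of the $(7,7,l)$ elimination to the Frey curve $E_{x,y}: Y^2 = X^3 - 5(x^2+y^2)X^2 + 5H_5(x,y)X$ at levels $2^\alpha 5^2$.
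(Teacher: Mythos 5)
Your proposal is correct and follows essentially the same route as the paper: the paper likewise eliminates the thirteen unwanted rational newforms at levels $50$, $200$, $400$ by comparing traces of Frobenius as in the $(7,7,l)$ case (invoking Kraus's method for a few small exponents $l$, and using the strengthened congruence $a_p(E_0) \equiv a_p(E_{x,y})(1+p) \pmod{l}$ when $p \mid z$), with the finite check delegated to the script {\tt Modular55l.m}. One small imprecision in your wording: the refined test set in the $p \mid z$ subcase is not \lq$(1+p)\cdot A_p$\rq\ (your $A_p$ consists of good-reduction traces), but rather $\{a_p(E_{a,b})(1+p)\}$ where $a_p(E_{a,b})=\pm 1$ is determined by the split or non-split multiplicative reduction of the Frey curve at each pair $(a,b)$ with $H_5(a,b)\equiv 0 \pmod{p}$.
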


By a basic application of Kraus' method we are able to eliminate the possibility of  $E_{400d}$ for all $l \in L_5$ except $l=7,11,19$. These remaining three cases can be dealt with using an analogue of Lemma~\ref{lem:ExpSieve77l}.

\begin{lem}
For $l \in L_5$, we have that $\rho_l^{x,y}$ does not arise from $E_{400d}$.
\end{lem}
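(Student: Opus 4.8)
The plan is to rule out $E_{400d}$ by combining trace comparisons in the style of Kraus with an exponent sieve. By Lemma~\ref{lem:ResultAfterCongruences55lBis} the representation $\rho_l^{x,y}$ arises from $E_{200b}$ or $E_{400d}$, so it suffices to eliminate the latter for every $l \in L_5$.

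First I would handle the bulk of the primes with a basic application of Kraus' method, exactly as in Section~\ref{sec:Modular55l}. Since $5 \nmid z$ we have $x+y=z_1^l$ and $H_5(x,y)=z_2^l$. Hence for any auxiliary prime $p \equiv 1 \pmod{l}$ with $p \ne 2,5$, the reduction $(a,b)$ of $(x,y)$ modulo $p$ satisfies $a+b \in (\F_p^*)^l$ and $H_5(a,b) \in (\F_p^*)^l$ whenever $p \nmid z$, while $E_{x,y}$ has multiplicative reduction at $p$ whenever $p \mid z$. Setting
\[A_{p,l}':=\{p+1-\#E_{a,b}(\F_p) : a,b \in \F_p,\ a+b \in (\F_p^*)^l,\ H_5(a,b) \in (\F_p^*)^l\}, \quad T_{p,l}':=A_{p,l}' \cup \{\pm(1+p)\},\]
the hypothesis $\rho_l^{x,y} \simeq \rho_l^{E_{400d}}$ forces $a_p(E_{400d}) \equiv t \pmod{l}$ for some $t \in T_{p,l}'$. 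Exhibiting a single auxiliary prime $p$ for which this congruence fails then eliminates $E_{400d}$, and I expect this to dispose of all $l \in L_5$ except $l=7,11,19$.

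For the residual exponents $l \in \{7,11,19\}$ the mod-$l$ trace data is insufficient, and I would invoke the analogue of Lemma~\ref{lem:ExpSieve77l}, now considerably simpler: the unit rank of $\Z[\zeta]$ is $1$, so there is a single exponent $r$ rather than a pair $(r,s)$, and only the case $g=1$ occurs since $5 \nmid z$. Fixing an auxiliary prime $p \equiv 1 \pmod{l}$, let $\p_1,\dots,\p_m$ be the primes of $\Z[\zeta]$ above $p$, with residue fields $\kappa_i$ and natural maps $\pi_i$. Let $\cA(E_{400d},p)$ be the set of $(a,b) \in \F_p^2-\{0,0\}$ with $a+b$ and $H_5(a,b)$ both $l$-th powers in $\F_p$ and satisfying $a_p(E_{400d}) \equiv a_p(E_{a,b}) \pmod{l}$ when $H_5(a,b) \ne 0$, respectively $a_p(E_{400d})^2 \equiv (p+1)^2 \pmod{l}$ when $H_5(a,b)=0$. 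Let $\cB(E_{400d},p)$ be the set of $r$ with $0 \le r < l$ for which some $(a,b) \in \cA(E_{400d},p)$ makes $\pi_i\!\left((a+b\zeta)/(1+\zeta)^r\right)$ an $l$-th power in $\kappa_i$ for every $i$. Exactly as in Lemma~\ref{lem:ExpSieve77l}, if $\rho_l^{x,y}$ arose from $E_{400d}$ then~\eqref{eqn:ZetaFactor55l} would hold with $r \in \cB(E_{400d},p)$ for every admissible $p$. I would then compute $\cB(E_{400d},p)$ in {\tt MAGMA} for a few such primes and intersect over $p$; the expectation is that the intersection is empty, which rules out $E_{400d}$ for these three exponents and completes the proof.

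The hard part will be the residual cases $l=7,11,19$. Basic Kraus is a routine finite search once the primes are fixed, but these exponents survive it precisely because the trace congruences remain satisfiable by $E_{400d}$ at the available small primes. The argument must therefore extract the finer multiplicative information carried by the unit exponent $r$ in $\Z[\zeta]$, and the delicate points are organizing the $l$-th power tests correctly across the several residue fields $\kappa_i$ and choosing primes $p$ whose splitting behaviour supplies enough independent conditions to force $\cB(E_{400d},p)=\emptyset$. Locating such primes for each of $l=7,11,19$ is the crux of the argument.
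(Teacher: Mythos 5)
Your proposal takes essentially the same route as the paper: the paper also eliminates $E_{400d}$ by a basic application of Kraus' method for all $l \in L_5$ except $l=7,11,19$, and then disposes of those three exponents via the analogue of Lemma~\ref{lem:ExpSieve77l} (the unit-exponent sieve in $\Z[\zeta]$ with a single exponent $r$ and $g=1$), with the verification deferred to the script {\tt Modular55l.m}. The only cosmetic difference is that the paper additionally notes it is convenient, at primes $p \mid z$, to strengthen the congruence $a_p(E_0) \equiv \pm(1+p) \pmod{l}$ to $a_p(E_0) \equiv a_p(E_{x,y})(1+p) \pmod{l}$.
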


To finish the proof of Proposition~\ref{prop:Modular55lBis} we now only have to deal with $E_{200b}$, which is possible using again the analogue of Lemma~\ref{lem:ExpSieve77l}. Computational details can be found in the second part of the {\tt MAGMA} script {\tt Modular55l.m}.

\subsubsection{The hyperelliptic curves}{\label{sec:HyperellipticCurves55l}

Now we come to the hyperelliptic curves.

Let $\theta=\zeta+\zeta^{-1}$
and $K=\Q(\theta)$; this is the totally real quadratic subfield of $L$. 
The Galois conjugate of $\theta$ are $\theta_1$, $\theta_2$which in terms of $\zeta$ are given by
\[\theta_1=\zeta+\zeta^{-1}, \qquad
\theta_2=\zeta^2+\zeta^{-2},\]
Note that
\[\theta_1=\theta,\qquad
\theta_2=-1-\theta.\]
Let
\[\gamma=\Norm_{L/K} (\beta).\]
Taking norms in \eqref{eqn:ZetaFactor55l} with $r=0$ down to $K$ we obtain
\[x^2+\theta x y+ y^2= \gamma^l.\]

Let $\gamma_1=\gamma$, $\gamma_2$ denote the conjugates of $\gamma$
that correspond respectively
to $\theta \mapsto \theta_j$, for $j=1,2$. Then
\[x^2+\theta_1 x y+ y^2= \gamma_1^l, \qquad
x^2+\theta_2 x y+ y^2= \gamma_2^l.\]
Furthermore, recall that
\[(x+y)^2=z_1^{2l}.\]
The left hand sides of the previous three equations are symmetric binary quadratic forms over $K$, hence linearly dependent. We calculate
\[(x+y)^2+\theta_2 (x^2+\theta_1 x y+y^2)+\theta_1 (x^2+\theta_2 x y+ y^2) = 0.\]
In terms of the coordinates $z_1^2, \gamma_1, \gamma_2$ we get
\[ z_1^{2l}+\theta_2  \gamma_1^l+\theta_1 \gamma_2^l = 0.\]
Using Remark \ref{rem:FermatQuotient}, we see that 
\[\left(\frac{z_1^2 \gamma_1}{\gamma_2^2},\frac{ z_1^{2l}-\theta_2 \gamma_1^l}{\theta_1 \gamma_2^l}\right) \in C_{l,1}(K)\]
where $C_{l,1}$ is the genus $(l-1)/2$ hyperelliptic curve given by
\begin{equation}\label{eqn:55lC1}
C_{l,1}: Y^2=-4 \eta_1 X^l+1, \quad \eta_1=\theta_2/\theta_1^2=-2\theta-3.
\end{equation}

The linear dependence between the symmetric binary quadratic forms $(x-y)^2$, $(x+y)^2$, and $x^2+\theta xy +y^2$ is given by
\[(\theta-2)(x-y)^2=-4(x^2+\theta xy+y^2)+(\theta+2)(x+y)^2.\]
Using $(x+y)^2=z_1^{2l}$ and $x^2+\theta xy+y^2=\gamma^l$, we get
\begin{equation}\label{eqn:55lC0Pre}
\left(\frac{x-y}{x+y}\right)^2=\frac{-4}{(\theta-2)} \left(\frac{\gamma}{z_1^2}\right)^l+\frac{\theta+2}{\theta-2}.
\end{equation}
We compute $-4/(\theta-2)=4(\theta+3)/5$ and $(\theta+2)/(\theta-2)=-(4\theta+7)/5$. Hence

\[P:=\left( \frac{\gamma}{z_1^2}, \frac{x-y}{x+y}\right) \in C_{l,0}(K)\]
where $C_{l,0}$ is the genus $(l-1)/2$ hyperelliptic curve given by
\begin{equation}\label{eqn:55lC0}
C_{l,0}: 5Y^2= (4 \theta+12) X^l-(4\theta+7).
\end{equation}
Note that in fact the second coordinate of $P$ lies in $\Q$. Furthermore, since $5$ is a square in $K$, the factor $5$ in front of $Y^2$ above could easily be absorbed by rescaling $Y$ (by a factor of $2 \theta+1$). However, this would spoil the nice feature of the curve that the points of our interest have second coordinate lying in $\Q$.

Regarding $K$-rational points on the curves $C_{0,l}$ and $C_{1,l}$, we note that
\begin{equation}\label{eqn:RatPoints55lNumberField}
C_{l,i}(K)  \supset 
\begin{cases}
\{\infty, (1,\pm 1)\} & \text{if } i=0\\
\{\infty, (0,\pm 1), (1,\pm \eta_1)\} & \text{if } i=1.
\end{cases}
\end{equation}

As in (the first part of) Lemma~\ref{lem:ReductionToCurves77l} we have the the following.

\begin{lem}\label{lem:ReductionToCurves55lNumberField}
Let $l \in L_5$. If for $i=0$ or $i=1$ equality holds in~\eqref{eqn:RatPoints55lNumberField}, then there are no non-trivial primitive integer solutions to $x^5+y^5=z^l$ with $5 \nmid z$.
\end{lem}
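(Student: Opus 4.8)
The plan is to follow the proof of the first part of Lemma~\ref{lem:ReductionToCurves77l}, now over the real quadratic field $K=\Q(\theta)$. Let $(x,y,z)$ be a non-trivial primitive integer solution to $x^5+y^5=z^l$ with $5\nmid z$. By the factorization of Section~\ref{sec:InitialFactorization55l} together with Proposition~\ref{prop:Modular55lBis} we obtain $x^2+\theta xy+y^2=\gamma^l$ and $(x+y)^2=z_1^{2l}$, where $x+y=z_1^l$ with $z_1\in\Z$, and $\gamma=\gamma_1$ has conjugate $\gamma_2$ under $\Gal(K/\Q)$. Exactly as in the proof of Lemma~\ref{lem:ReductionToCurves77l}, the elements $\gamma_1,\gamma_2,z_1$ are nonzero, pairwise coprime algebraic integers of $\Z[\theta]$, and the only roots of unity in $\Z[\theta]$ are $\pm1$. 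Such a solution gives rise to a point $P=(X,Y)\in C_{l,i}(K)$ for both $i=0$ and $i=1$, with $X=\gamma/z_1^2$ when $i=0$ and $X=z_1^2\gamma_1/\gamma_2^2$ when $i=1$. Since the numerator of $X$ is a product of nonzero elements, $X\ne0$, and clearly $P\ne\infty$; thus $P$ is neither $\infty$ nor (in the case $i=1$) one of the points $(0,\pm1)$.

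Assuming equality holds in~\eqref{eqn:RatPoints55lNumberField} for the relevant $i$, the only possibility not yet excluded is that $P$ has $X$-coordinate $1$, so it remains to rule out $X=1$. For $i=0$ this reads $\gamma=z_1^2$; since $z_1$ is coprime to $\gamma=\gamma_1$, it must be a unit, whence $z_1=\pm1$ and $\gamma=1$. For $i=1$ the condition $X=1$ gives $\gamma_2^2=z_1^2\gamma_1$; pairwise coprimality of $\gamma_1,\gamma_2,z_1$ forces $z_1=\pm1$, so $\gamma_2^2=\gamma_1$. Applying the nontrivial element $\sigma\in\Gal(K/\Q)$, which sends $\gamma_1\mapsto\gamma_2$ and $\gamma_2\mapsto\gamma_1$ (here I would note that $\sigma(\gamma_1)^l=\gamma_2^l$ together with $l$ odd forces $\sigma(\gamma_1)=\gamma_2$, since $K$ is real), yields $\gamma_1^2=\gamma_2$. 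Combining the two relations gives $\gamma_1^4=\gamma_1$, hence $\gamma_1^3=1$, so $\gamma_1$ is a root of unity and therefore $\gamma=\gamma_1=1$.

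In both cases we conclude $x^2+\theta xy+y^2=\gamma^l=1$. As $x,y\in\Z$ and $\theta\notin\Q$, the coefficient of $\theta$ must vanish, so $xy=0$, contradicting $xyz\ne0$; this proves the lemma. I expect the only delicate step to be the case $i=1$: in contrast to the situation over $\Q$, a unit of $\Z[\theta]$ need not equal $\pm1$ (there is a fundamental unit), so coprimality alone does not pin down $\gamma$, and it is precisely the short Galois-descent computation forcing $\gamma_1^3=1$ that is needed. The case $i=0$ and the final passage to $xy=0$ are routine.
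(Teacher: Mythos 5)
Your proof is correct and takes essentially the same approach as the paper, which proves this lemma simply by invoking the argument of the first part of Lemma~\ref{lem:ReductionToCurves77l}. Your reduction to excluding $X=1$, the coprimality argument forcing $z_1=\pm 1$, and the Galois computation $\gamma_1=\gamma_2^2$, $\gamma_2=\gamma_1^2$, hence $\gamma_1^3=1$, are exactly the quadratic-field analogues of the paper's treatment (compare the cases $i=2,3$ there, where $\gamma_2^8=\gamma_2$), so the final contradiction $xy=0$ goes through as in the paper.
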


Let $\p$ be the prime above $5$. We note that $C_{l,1}$ has good reduction at $\p$. Define 
\[C_{l,1}(K)':=\{ P \in C_{l,i}(K) \; : \; P \pmod{\p}=(1,2)\}.\]
As in the $x^7+y^7=z^l$ case, define as well
\[C_{l,0}(K)':=\{ P \in C_{l,0}(K) \; : \; P \text{\ has second coordinate in } \Q\}.\]
Completely similar as in Remark~\ref{rem:ExtraInfo}, we arrive at a refinement of Lemma~\ref{lem:ReductionToCurves55lNumberField}.

\begin{lem}\label{lem:ReductionToCurves55lNumberFieldBis}
Let $l \in L_5$.
\item If for $i=0$ or $i=1$ we have
\[C_{l,i}(K)' = 
\begin{cases}
\{\infty, (1,\pm 1)\} & \text{if } i=0 \\
\{(1,2\theta+3)\} & \text{if } i=1,
\end{cases}\]
then there are no non-trivial primitive integer solutions to $x^5+y^5=z^l$ with $5\nmid z$.
\end{lem}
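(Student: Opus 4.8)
The plan is to mirror the first part of Lemma~\ref{lem:ReductionToCurves77l} together with the local refinement of Remark~\ref{rem:ExtraInfo}, now over the real quadratic field $K$. First I would take a non-trivial primitive solution $(x,y,z)$ to~\eqref{eqn:55l} with $5 \nmid z$ and $l \in L_5$; by Proposition~\ref{prop:Modular55lBis} the factorization~\eqref{eqn:ZetaFactor55l} holds with $r=0$, and descending via $\Norm_{L/K}$ gives $x^2+\theta xy+y^2 = \gamma^l$ together with its conjugate. The constructions above then attach to this solution a point $P=(X,Y) \in C_{l,i}(K)$ for each of $i=0,1$, where $\gamma = \gamma_1$, $\gamma_2$, $z_1$ are nonzero, pairwise coprime algebraic integers with $z_1 \in \Z$, and the only roots of unity of $\Z[\theta]$ are $\pm 1$.

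Next I would verify that $P$ actually lies in the refined locus $C_{l,i}(K)'$. For $i=0$ this is immediate from~\eqref{eqn:55lC0Pre}, whose right-hand side exhibits the second coordinate $(x-y)/(x+y)$ as rational. For $i=1$ I would repeat the reduction argument of Remark~\ref{rem:ExtraInfo}: since $x^2+\theta_j xy+y^2 \equiv (x+y)^2 \pmod{\p}$ for the prime $\p$ above $5$, and since $l$ is odd with $5 \nmid z$, one gets $\gamma_1 \equiv \gamma_2 \equiv z_1^2 \not\equiv 0 \pmod{\p}$ (the $l$-th power map being a bijection of $\F_5^\ast$); plugging this into $X = z_1^2\gamma_1/\gamma_2^2$ and using $\theta \equiv 2 \pmod{\p}$ shows that $P$ reduces to $(1,2)$, so $P \in C_{l,1}(K)'$.

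The core step is then to force $X \ne 1$. If $X = 1$ for $i=0$ then $\gamma = z_1^2$, and coprimality of $\gamma$ with $z_1$ forces $z_1 = \pm 1$ and $\gamma = 1$; if $X = 1$ for $i=1$ then $z_1^2\gamma_1 = \gamma_2^2$, coprimality again gives $z_1 = \pm 1$, and applying the nontrivial element of $\Gal(K/\Q)$ (which swaps $\gamma_1 \leftrightarrow \gamma_2$) to $\gamma_1 = \gamma_2^2$ yields $\gamma_2 = \gamma_1^2$, whence $\gamma_1 = \gamma_1^4$, so $\gamma_1$ is a root of unity and $\gamma = \pm 1$. In either case $x^2+\theta xy+y^2 = \gamma^l \in \{\pm 1\} \subset \Z$; since $\{1,\theta\}$ is a $\Q$-basis of $K$ this forces $xy = 0$, contradicting $xyz \ne 0$. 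Thus $X \ne 1$, while clearly $P \ne \infty$.

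Finally I would assemble the contradiction: for whichever $i \in \{0,1\}$ satisfies the hypothesis, the point $P \in C_{l,i}(K)'$ has $P \ne \infty$ and $X \ne 1$, yet every point in the prescribed set for $C_{l,i}(K)'$ is either $\infty$ or has $X = 1$, so no such solution can exist. I expect the only delicate point to be the residue computation at $\p$: one must check that a genuine solution reduces to $(1,2)$ on $\tilde{C}_{l,1}/\F_5$ and that, among the known points $(1,\pm\eta_1)$ with $\eta_1 = -2\theta-3$, exactly $(1,2\theta+3)$ does so. The algebraic argument forcing $\gamma = \pm 1$ from $X=1$ is routine once coprimality and the $\Gal(K/\Q)$-action are in hand.
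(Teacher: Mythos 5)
Your proof is correct and follows essentially the same route as the paper, which itself only sketches this lemma by declaring it ``completely similar'' to Remark~\ref{rem:ExtraInfo} and Lemma~\ref{lem:ReductionToCurves77l}: attach points to a putative solution, verify the local/rationality conditions defining $C_{l,i}(K)'$, and show $X=1$ forces $\gamma=\pm 1$ via coprimality and the $\Gal(K/\Q)$-action, hence $xy=0$. Your residue computation at $\p$ (including that $(1,2\theta+3)$, not its negative, reduces to $(1,2)$) correctly fills in the details the paper leaves implicit.
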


\subsubsection{Rational points on $C_{l,i}$}

For $i=0,1$ let $J_{l,i}:=\Jac(C_{l,i})$. For $l=11,13,17$ it is easy to check that 
\[[(1,1)-\infty] \in J_{l,0}(K), \quad [(1,\eta_1)-\infty] \in J_{l,1}(K)\]
are points of infinite order. Assume GRH. For these values of $l$ we also computed upper bounds for the ranks of $J_{l,0}(K)$ and $J_{l,1}(K)$; see Table \ref{table:RankBoundsC55l}.

\begin{table}[h!]
\caption{GRH Rank bounds for the Jacobians of $C_{l,0}$ and  $C_{l,1}$}
\begin{tabular}{c||c|c||c|c}
$l$ & $\dim_{\F_2}\Sel^{(2)}(K,\Jac(C_{l,0}))$ & Time  & $\dim_{\F_2}\Sel^{(2)}(K,\Jac(C_{l,1}))$ & Time\\
\hline
11 & 1 & 55s & 2 & 145s\\
13 & 2 & 178s & 1 & 175s\\
17 & 4 & 2178s & 2 & 13087s \\
\end{tabular}
\label{table:RankBoundsC55l}
\end{table}

We conclude that $J_{11,0}(K)$ and $J_{13,1}(K)$ both have rank $1$ and that we have an explicit generator for a finite index subgroup for both of them. Hence, we are again in a position to apply Chabauty-Coleman.

\begin{lem}\label{lem:RatPoints55lBis}
Assuming GRH, we have
\begin{align*}
C_{11,0}(K)' & =  \{\infty, (1,\pm 1)\},\\
C_{13,1}(K)' & = \{(1,2\theta+3)\}.
\end{align*}
\end{lem}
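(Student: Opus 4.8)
The plan is to use the method of Chabauty--Coleman described in Section~\ref{sec:chab}, in the manner of the proof of Proposition~\ref{prop:RatPoints775}. Under GRH, Table~\ref{table:RankBoundsC55l} gives $\dim_{\F_2}\Sel^{(2)}(K,J_{11,0})=1$ and $\dim_{\F_2}\Sel^{(2)}(K,J_{13,1})=1$; combined with the infinite-order points $[(1,1)-\infty]\in J_{11,0}(K)$ and $[(1,\eta_1)-\infty]\in J_{13,1}(K)$, this forces $\rank J_{11,0}(K)=\rank J_{13,1}(K)=1$, with the displayed point generating a finite-index subgroup in each case; write $D$ for this generator. For a prime of good reduction with absolute ramification index $e<p-1$, I would evaluate the $p$-adic integrals $\int_0^D X^i\,dX/Y$ for $0\le i\le g-1$ (with $g=5$, resp.\ $6$) following \cite{We}, produce a nonzero $\omega\in\Ann(J(K))$ --- which annihilates all of $J(K)$ because $D$ generates a finite-index subgroup and the rank is $1$ --- scale it so that its reduction $\overline{\omega}$ on the special fibre $\tilde{C}$ is nonzero, and apply the estimate of Section~\ref{sec:chab}: over each residue class at which $\overline{\omega}$ does not vanish there lies at most one $K$-rational point.

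For $C_{13,1}$ this is essentially identical to the treatment of $C_{5,3}$ and $C_{11,3}$. The curve has good reduction at the prime $\p$ above $5$, where $e=2<4=p-1$ and the residue field is $\F_5$, and $C_{13,1}(K)'$ is by definition the set of points reducing to $(1,2)\bmod\p$. Since $2\theta+3\equiv 2$ and $\eta_1=-2\theta-3\equiv 3 \pmod{\p}$, the point $(1,2\theta+3)$ reduces to $(1,2)$ while its conjugate reduces to $(1,3)$ and lies in a different class. Hence it suffices to check that the reduced annihilating differential does not vanish at $(1,2)$; this bounds the class by one point and gives $C_{13,1}(K)'=\{(1,2\theta+3)\}$.

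For $C_{11,0}$ the prime above $5$ is one of bad reduction, since both the leading coefficient $4\theta+12$ and the constant $4\theta+7$ lie in it. I would instead work at the inert prime $\mathfrak{q}=(3)$, which has residue field $\F_9$ and $e=1<2=p-1$, and at which one checks $C_{11,0}$ has good reduction. The condition defining $C_{11,0}(K)'$ --- second coordinate in $\Q$ --- is now exploited via the inertness of $\mathfrak{q}$: any affine $P\in C_{11,0}(K)'$ reduces to a point of $\tilde{C}(\F_9)$ whose $Y$-coordinate lies in the proper subfield $\F_3$, while $\infty$ is included by convention. Thus only the residue classes with $Y$-coordinate in $\F_3\cup\{\infty\}$ are relevant. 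The known points $\infty$, $(1,1)$, $(1,-1)$ fall into three distinct such classes (the affine ones reducing to $(1,1)$ and $(1,2)$), so once $\overline{\omega}$ is seen to be nonvanishing on every relevant residue class, the Chabauty bound yields $C_{11,0}(K)'=\{\infty,(1,\pm1)\}$.

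The crux is the non-vanishing verification for $\overline{\omega}$ at the finitely many relevant residue points. Where it succeeds the argument is immediate, but if $\overline{\omega}$ vanishes at some relevant point --- in particular at a point to which two global points reduce, as occurs for $C_{5,3}$ --- the crude one-point-per-class estimate is not enough. In that event I would repeat the finer analysis of Proposition~\ref{prop:RatPoints775}: pick a local uniformizer at the reduction, expand $X^i\,dX/Y$ and the pertinent power series in it, integrate formally, and bound by Strassmann's theorem the number of zeros in the residue disc, identifying them with the known lifts and discarding any non-$K$-rational (e.g.\ Weierstrass) lift; alternatively a Mordell--Weil sieve at an auxiliary prime would conclude. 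The concrete integral and non-vanishing computations are those recorded in the {\tt MAGMA} script {\tt Chabauty55l.m}.
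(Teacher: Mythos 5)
Your overall strategy coincides with the paper's: Chabauty--Coleman at the inert prime above $3$ for $C_{11,0}$ and at the ramified prime above $5$ for $C_{13,1}$, with the rank-$1$ statements extracted from Table~\ref{table:RankBoundsC55l} and the known infinite-order points exactly as you describe. But both of your primary arguments have genuine gaps. For $C_{13,1}$, the step you declare sufficient --- checking that the reduced annihilating differential does not vanish at $(1,2)$ --- is a check that \emph{must fail}: the paper computes $v_{\p}(c_0)=1$ while $v_{\p}(c_i)=2$ for $i=1,\dots,5$, so in any annihilating combination $\sum a_i T^i\,dT/Y$ (scaled so the coefficients are integral and not all in $\p$) the coefficient $a_0$ is forced into $\p$; that is, \emph{every} reduced annihilating differential vanishes at $\tilde{P}=(1,2)$. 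Note this happens even though only one known global point lies in that disc, so your diagnostic for when the crude bound breaks down (two global points colliding in one disc, as for $C_{5,3}$) does not detect this case. What actually completes the proof is precisely your contingency plan: the paper takes $\omega=(T-(c_1/c_5)T^5)\,dT/Y$, whose reduction vanishes to order exactly $1$ at $\tilde{P}$; the local Coleman integral then has Strassmann bound $2$ and, by construction, a double zero at $(1,2\theta+3)$, so no other point of $C_{13,1}(K)$ lies in that disc.

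For $C_{11,0}$, your inference ``once $\overline{\omega}$ is nonvanishing on every relevant residue class, the Chabauty bound yields $C_{11,0}(K)'=\{\infty,(1,\pm 1)\}$'' is a non sequitur, because there are four relevant classes, not three: besides $\infty$, $(1,1)$, $(1,2)$ there is the Weierstrass point $(\tilde{X}_0,0)\in\tilde{C}_{11,0}(\F_9)$ with $\tilde{X}_0^2=-1$, whose $Y$-coordinate $0$ lies in $\F_3$. Nonvanishing of $\overline{\omega}$ at that point gives \emph{at most one} $K$-rational point in its disc; you need \emph{zero}. The paper empties this disc by observing that the unique Hensel lift of $(\tilde{X}_0,0)$ to $C_{11,0}(K_{\p})$ is the Weierstrass point $(X_0,0)$, which is not $K$-rational; since $[(X_0,0)-\infty]$ is $2$-torsion it is also killed by the integration pairing, so a $K$-rational point in the same disc would produce a second zero of the Coleman integral, contradicting the one-zero bound. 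You do possess this tool (``discarding any non-$K$-rational (e.g.\ Weierstrass) lift''), but you invoke it only under the trigger that $\overline{\omega}$ vanishes somewhere, which is not what happens here; as assembled, your main argument for $C_{11,0}$ leaves the fourth residue disc unaccounted for.
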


\begin{proof}
We start by determining $C_{11,0}(K)'$ using Chabauty-Coleman with the prime $\p$ above $3$. The curve $C_{11,0}$ has good reduction at $\p$. This reduction, denoted $\tilde{C}_{11,0}$, contains $10$ $\F_9$-rational points, but the subset of $\F_9$-rational points whose second coordinate is $\F_3$-rational consists only of the $4$ points, namely $\infty, (1,\pm 1),(\tilde{X}_0,0)$ where $\tilde{X}_0 \in \F_9$ with $\tilde{X}_0^2=-1$. If we show that for each $\tilde{P}=\infty,(1, \pm 1)$ we have a unique lift to $P \in C_{11,0}(K)$ and that $(\tilde{X}_0,0)$ does not lift to a point in $C_{11,0}(K)$, then it will follow that $C_{11,0}(K)'  =  \{\infty, (1,\pm 1)\}$.

A basis for $\Omega(C_{11,0}/K_{\p})$ is given by $X^i dX/Y$ for $i=0,1,\ldots, 4$. We can compute
\[c_i:=\int_0^D X^i \frac{dX}{Y}, \quad i=0,1, \ldots, 4\]
to high enough $\p$-adic precision and find e.g. that $v_{\p}(c_2)=v_{\p}(c_4)=v_{\p}(c_4-c_2)=v_{\p}(c_4+c_2)=1$. Write $u:=-c_2/c_4$ and let $\omega:=(X^2+uX^4)/dY$. Then we see that $\omega \in \Ann(\Jac(C_{11,0})(K))$ and it reduces to a differential $\tilde{\omega}$ on $\tilde{C}_{11,0}/\F_9$. Since $v_{\p}(u)=v_{\p}(c_2)-v_{\p}(c_4)=0$, we see that $\tilde{\omega}$ does not vanish at $\infty$. Similarly, since $v_{\p}(1+u)=v_{\p}(c_4-c_2)-v_{\p}(c_4)=0$ we see that $\tilde{\omega}$ does not vanish at $(1,\pm 1)$. Finally, since $v_{\p}(-1+u)=v_{\p}(c_4+c_2)-v_{\p}(c_4)=0$ we see that $\tilde{\omega}$ does not vanish at $(\tilde{X}_0,0)$. 
We conclude that for each $\tilde{P}=\infty,(1, \pm 1)$ we have a unique lift to $P \in C_{11,0}(K)$. The point $(\tilde{X}_0,0)$ Hensel-lifts uniquely to a Weierstrass point $(X_0,0) \in C_{11,0}(K_p)$, which is not $K$-rational. This finishes the first part of the proof as in the proof of Proposition~\ref{prop:RatPoints55lCaseI}.

Next we determine $C_{13,1}(K)'$ using Chabauty-Coleman with the prime $\p$ above $5$. The curve $C_{13,1}$ has good reduction at $\p$, denoted $\tilde{C}_{13,1}$. Let $T=X-1$ be a uniformizer at $P=(1,2\theta+3)$. A basis for $\Omega(C_{13,1}/K_{\p})$ is given by $T^i dT/Y$ for $i=0,1,\ldots, 5$. We can compute
\[c_i:=\int_0^D T^i \frac{dT}{Y}, \quad i=0,1, \ldots, 5\]
to high enough $\p$-adic precision and find that $v_{\p}(c_0)=1$ and $v_{\p}(c_i)=2$ for $i=1, \ldots ,5$. This shows that it is impossible to find an $\omega \in \Ann(\Jac(C_{13,1})(K))$ with good reduction at $\p$ which is non vanishing at $\tilde{P} \in \tilde{C}_{13,1}(\F_5)$. Let us define instead $\omega:=(T-c_1/c_5 T^5)dT/Y$. Then $\omega \in \Ann(\Jac(C_{13,1})(K))$ and the reduction mod $\p$ has vanishing order $1$ at $\tilde{P}$. On can indeed check that the Strassmann bound for the function
\[t \mapsto \int_0^{\pi t} (T-\frac{c_1}{c_5} T^5)\frac{dT}{Y}\]
(with $\pi$ a suitable uniformizing parameter) equals $2$. By construction it has a double zero at $t=0$, hence the only lift of $\tilde{P}$  to $C_{13,1}(K)$ is $P$. This means $C_{13,1}(K)' = \{(1,2\theta+3)\}$. Further details can be found in our {\tt MAGMA} script {\tt Chabauty55l.m}.
\end{proof}

We note that it should not be much harder to determine $C_{11,0}(K)$ and $C_{13,1}(K)$ completely. But since it is not necessary for our purposes, we will not pursue this.

\begin{cor}
Assuming GRH, there are no non-trivial primitive integer solutions to~\eqref{eqn:55l} for $l\in \{11,13\}$.
\end{cor}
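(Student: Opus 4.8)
The plan is to prove the corollary by assembling the pieces established above, exactly mirroring the proof of Theorem~\ref{thm:55l}. As there, I would split a hypothetical non-trivial primitive solution $(x,y,z)$ to~\eqref{eqn:55l} with $l \in \{11,13\}$ according to whether $5 \mid z$ or $5 \nmid z$; note that $11,13 \in L_5$, so both values fall within the range covered by the relevant propositions.

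First I would dispose of the case $5 \mid z$. This is already handled completely and \emph{unconditionally} by Proposition~\ref{prop:SecondCase55l}, which rules out non-trivial primitive solutions with $5 \mid z$ for every integer $l \geq 2$; in particular no GRH is needed here.

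It remains to treat $5 \nmid z$. Here I would invoke Proposition~\ref{prop:Modular55lBis}: since $l \in L_5$, any such solution satisfies~\eqref{eqn:ZetaFactor55l} with $r=0$, and therefore, via the factorization over $K=\Q(\theta)$ carried out in Section~\ref{sec:HyperellipticCurves55l}, gives rise to $K$-rational points on both $C_{l,0}$ and $C_{l,1}$. By the refined reduction Lemma~\ref{lem:ReductionToCurves55lNumberFieldBis}, it then suffices to verify, for at least one index $i \in \{0,1\}$, the prescribed equality for $C_{l,i}(K)'$. This is precisely what Lemma~\ref{lem:RatPoints55lBis} supplies under GRH: for $l=11$ I would take $i=0$, using $C_{11,0}(K)' = \{\infty,(1,\pm 1)\}$, and for $l=13$ I would take $i=1$, using $C_{13,1}(K)' = \{(1,2\theta+3)\}$. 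In either case Lemma~\ref{lem:ReductionToCurves55lNumberFieldBis} yields the absence of non-trivial primitive solutions with $5 \nmid z$, which together with the $5 \mid z$ case completes the argument.

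I expect no genuine obstacle at the level of the corollary itself, since all the substantive work has already been carried out: the only hard steps are the GRH-conditional $2$-Selmer rank bounds of Table~\ref{table:RankBoundsC55l} (from which one reads off $\rank J_{11,0}(K)=\rank J_{13,1}(K)=1$) and the Chabauty-Coleman point determinations inside Lemma~\ref{lem:RatPoints55lBis}. Thus the GRH hypothesis enters the proof of the corollary in exactly one place, namely through Lemma~\ref{lem:RatPoints55lBis}, whereas both the $5 \mid z$ case and the modular reduction of the $5 \nmid z$ case remain entirely unconditional.
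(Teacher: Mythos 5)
Your proposal is correct and matches the paper's (implicit) argument exactly: the corollary is assembled from Proposition~\ref{prop:SecondCase55l} for the unconditional case $5 \mid z$, and from Proposition~\ref{prop:Modular55lBis}, Lemma~\ref{lem:ReductionToCurves55lNumberFieldBis} (with $i=0$ for $l=11$ and $i=1$ for $l=13$), and the GRH-conditional Lemma~\ref{lem:RatPoints55lBis} for the case $5 \nmid z$. Your identification of where GRH enters --- only through the Selmer rank bounds of Table~\ref{table:RankBoundsC55l} feeding into Lemma~\ref{lem:RatPoints55lBis} --- is also precisely right.
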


\subsection{Making the results unconditional}

Full GRH is of course not necessary, we \lq only\rq\ need to obtain certain class and unit group information unconditionally in order to carry out the $2$-descent on the four Jacobians involved. For a hyperelliptic curve defined over a number field  $K$ given by an equation of the form $y^2=f(x)$ where $f(x) \in K[x]$ is irreducible over $K$, it suffices to have available the class and unit group information of the number field $L:=K[x]/f(x)$ (or possibly only certain relative info for the extension $K/L$). For example in the case of $x^5+y^5=z^l$ with $l \in \{11,13\}$ the field $L=L_l$ coming from the curve $C_{11,0}$ for $l=11$ and $C_{13,1}$ for $l=13$ is given by $L_l=\Q[t]/g_l(t)$ with $g_{11}(t):=t^{22}+2t^{11}-4$ and $g_{13}(t)=t^{26}+22 t^{13}-4$. Assuming GRH, either {\tt MAGMA}  or {\tt PARI/GP} can compute the class and unit group info for these two fields rather quickly. In particular, we find that the class group is trivial for both fields (assuming of course GRH). 
It suffices in fact to know that our conditional unit group is a finite index $2$-saturated subgroup of the (unconditional) unit group. This will be easy to check and reduces the problem to verifying that the class groups of the fields $L_{11}$ and $L_{13}$ are trivial. This is something that can be parallelized and it looks like the class group verification for at least $L_{11}$ and probably also $L_{13}$ is within reach of current technology (but the actual verification, especially for $L_{13}$, would in practice of course take considerable effort, time, and computer power).
For $x^7+y^7=z^{11}$ we are looking at number fields of (absolute) degree $33$, and verifying class group information is probably not doable in practice at the moment. The case where $7|z$ might actually be solved using a Hilbert modular approach. We did not pursue this however, since we are not able to treat the case $7 \nmid z$ unconditionally anyway.

Alternatively, we might be able to use partial results on BSD for abelian
varieties over number fields. The four Jacobians $J$ involved, for which we
need to determine the rank unconditionally, all have CM (over a cyclotomic
extension) and are defined over a totally real number field. For such abelian
varieties, the partial BSD result \lq if analytic rank $\leq 1$, then analytic
rank = algebraic rank\rq\ seems within reach; see e.g.\ \cite{Zhang04}. If on
top of this, we are able to compute $L_J(1)$ in the rank $0$ case and $L_J'(1)$
in the three rank $1$ cases to high enough precision to conclude that these
four values are nonzero, then we have made our results unconditional.
However, the computations of $L_J(1)$ and $L_J'(1)$ do not seem to be easier than the class group computations at the moment.

\subsection*{Acknowledgements}
The authors would like to thank the anonymous referee for providing various useful comments and Karim Belabas for discussing class group computations in {\tt PARI/GP}.
The first-named author was supported by a VENI grant from the Netherlands Organisation for Scientific Research (NWO).
The second-named author was supported by an EPSRC Leadership Fellowship.

\end{document}